\newcommand{\Q}{{\mathbb{Q}}}
\newcommand{\C}{{\mathbb{C}}}
\newcommand{\Z}{{\mathbb{Z}}}
\newcommand{\R}{{\mathbb{R}}}
\newcommand{\ba}{{\boldsymbol{a}}}
\newcommand{\bc}{{\boldsymbol{c}}}
\newcommand{\bD}{{\mathbf{D}}}
\newcommand{\bC}{{\mathbf{C}}}
\newcommand{\bH}{{\mathbf{H}}}
\newcommand{\bJ}{{\mathbf{J}}}
\newcommand{\fp}{{\mathfrak{p}}}
\newcommand{\fC}{{\mathfrak{C}}}
\newcommand{\cA}{{\mathcal{A}}}
\newcommand{\cE}{{\mathcal{E}}}
\newcommand{\cD}{{\mathcal{D}}}
\newcommand{\cL}{{\mathcal{L}}}
\newcommand{\cR}{{\mathcal{R}}}
\newcommand{\cLR}{{\mathcal{LR}}}
\newcommand{\cO}{{\mathcal{O}}}
\newcommand\Irr{\operatorname{Irr}}
\newtheorem{thm}{Theorem}[section]
\newtheorem{cor}[thm]{Corollary}
\newtheorem{prop}[thm]{Proposition}
\newtheorem{lem}[thm]{Lemma}
\theoremstyle{definition}
\newtheorem{defn}[thm]{Definition}
\newtheorem{exmp}[thm]{Example}
\theoremstyle{remark}
\newtheorem{rem}[thm]{Remark}
\renewcommand{\leq}{\leqslant}
\renewcommand{\geq}{\geqslant}
\renewcommand{\atop}[2]{\genfrac{}{}{0pt}{}{#1}{#2}}
\address{Department of Mathematical Sciences, King's College, 
Aberdeen AB24 3UE, Scotland, UK}
\email{geck@maths.abdn.ac.uk}
\begin{document}

\date{}

\title{On Iwahori--Hecke algebras with unequal parameters and
Lusztig's isomorphism theorem}

\author{Meinolf Geck}

\subjclass[2000]{Primary 20C08; Secondary 20G40}

\begin{abstract} 
By Tits' deformation argument, a generic Iwahori--Hecke algebra $\bH$ 
associated to a finite Coxeter group $W$ is abstractly isomorphic to the 
group algebra of $W$. Lusztig has shown how one can construct an explicit 
isomorphism, provided that the Kazhdan--Lusztig basis of $\bH$ satisfies 
certain deep properties. If $W$ is crystallographic and $\bH$ is a 
one-parameter algebra, then these properties are known to hold thanks to a 
geometric interpretation. In this paper, we develop some new general methods 
for verifying these properties, and we do verify them for two-parameter 
algebras of type $I_2(m)$ and $F_4$ (where no geometric interpretation is
available in general). Combined with previous work by Alvis, Bonnaf\'e, 
DuCloux, Iancu and the author, we can then extend Lusztig's construction 
of an explicit isomorphism to all types of $W$, without any restriction on 
the parameters of $\bH$.
\end{abstract}

\maketitle

\begin{center}
{\it Dedicated to Professor Jacques Tits on his 80th birthday}
\end{center}
\bigskip

\pagestyle{myheadings}
\markboth{Geck}{On Iwahori--Hecke algebras with unequal parameters}

\section{Introduction} \label{sec0}

Let $(W,S)$ be a Coxeter system where $W$ is finite. Let $F$ be a field of
characteristic zero and $A=F[v_s^{\pm 1} \mid s \in S]$ the ring of Laurent 
polynomials over $F$, where $\{v_s\mid s \in S\}$ is a collection of 
indeterminates such that $v_s=v_t$ whenever $s,t \in S$ are conjugate in 
$W$. Let $\bH$ be the associated ``generic'' Iwahori--Hecke algebra. This 
is an associative algebra over $A$, which is free as an $A$-module with 
basis $\{T_w\mid w \in W\}$. The multiplication is given by the rule 
\[ T_sT_w=\left\{\begin{array}{cl} T_{sw} & \quad \mbox{if $l(sw)>l(w)$},\\
T_{sw}+(v_s-v_s^{-1})T_w & \quad \mbox{if $l(sw)<l(w)$},\end{array}
\right.\]
where $s\in S$ and $w\in W$; here, $l\colon W \rightarrow \Z_{\geq 0}$ is 
the usual length function on $W$. 

Let $K$ be the field of fractions of $A$. By scalar extension, we obtain a 
$K$-algebra $\bH_K=K \otimes_A \bH$, which is well-known to be separable.
On the other hand, there is a unique ring homomorphism $\theta_1\colon A 
\rightarrow F$ such that $\theta_1(v_s)=1$ for all $s \in S$. Then we can 
regard $F$ as an $A$-algebra (via $\theta_1$) and obtain $F \otimes_A \bH=
F[W]$, the group algebra of $W$ over $F$. By a general deformation 
argument due to Tits (see \cite[Chap.~IV, \S 2, Exercise~27]{bour}), one 
can show that $\bH_{K'}$ and $K'[W]$ are abstractly isomorphic where 
$K'\supseteq K$ is a sufficiently large field extension. 

One of the purposes of this paper is to prove the following 
finer result which was first obtained by Lusztig \cite{Lu0} for finite 
Weyl groups in the case where all $v_s$ ($s \in S$) are equal.

\begin{thm} \label{Mmain} There exists an algebra homomorphism $\psi 
\colon \bH\rightarrow A[W]$ with the following properties:
\begin{itemize}
\item[(a)] If we extend scalars from $A$ to $F$ (via $\theta_1$), then 
$\psi$ induces the identity map.
\item[(b)] If we extend scalars from $A$ to $K$, we obtain an isomorphism 
$\psi_K\colon \bH_K \stackrel{\sim}{\rightarrow} K[W]$. 
\end{itemize}
\end{thm}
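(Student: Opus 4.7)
The plan is to adapt Lusztig's construction from the equal-parameter case, realising $\psi$ as a two-step composition
\[ \bH \xrightarrow{\phi} A \otimes_{\Z} J \xrightarrow{\sigma} A[W], \]
where $J$ is Lusztig's asymptotic ring attached to $\bH$. To define $J$, take the Kazhdan--Lusztig basis $\{C_w \mid w \in W\}$ of $\bH$ with structure constants $h_{x,y,z} \in A$ given by $C_x C_y = \sum_z h_{x,y,z} C_z$, and use the $a$-function (defined with respect to the weight function $s \mapsto \log v_s$) to extract leading integer coefficients $\gamma_{x,y,z^{-1}} \in \Z$. These serve as structure constants for $J$ on a basis $\{t_w\}$ indexed by $W$. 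Provided Lusztig's conjectures P1--P15 hold, $J$ is an associative $\Z$-algebra with identity $\sum_{d \in \cD} t_d$, where $\cD$ is the set of distinguished involutions, and there is a natural homomorphism $\phi \colon \bH \to A \otimes_\Z J$ given on the Kazhdan--Lusztig basis by a formula of the form $C_x \mapsto \sum_{d \in \cD, z} (\text{leading part of } h_{x,d,z})\, t_z$.

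To recover $K[W]$ from $J \otimes_{\Z} K$, one exploits the decomposition of $J$ along two-sided cells, each cell contributing a matrix algebra summand to the semisimple $K$-algebra $J_K$. Matching this Wedderburn decomposition with that of $K[W]$ via Tits' deformation argument and tracking the resulting bijection $\Irr(\bH_K) \leftrightarrow \Irr(W)$ through the $a$-function provides $\sigma$. Property (b) is then automatic, since both $\phi$ and $\sigma$ are isomorphisms over $K$. For (a), one verifies cell by cell that the specialisation of $\psi$ at $v_s = 1$ is compatible with the identification $F \otimes_A \bH \cong F[W]$ induced by $T_w \mapsto w$; this reduces to controlling how the specialised Kazhdan--Lusztig basis of $F[W]$ interacts with the leading-coefficient construction.

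The main obstacle is verifying Lusztig's conjectures P1--P15 (or the subset actually needed to set up $\phi$ and $\sigma$) in the full generality of unequal parameters. In the equal-parameter crystallographic case these follow from a geometric interpretation of the $h_{x,y,z}$ as graded dimensions of stalks of intersection cohomology sheaves on Schubert varieties, a tool not available in the unequal setting. Building on the earlier results of Alvis, Bonnaf\'e, DuCloux, Iancu and the author, the outstanding cases for Theorem~\ref{Mmain} are the two-parameter dihedral types $I_2(m)$ and the exceptional type $F_4$ with two independent parameters. For $I_2(m)$, explicit closed formulas for the Kazhdan--Lusztig polynomials of dihedral groups allow a direct combinatorial computation of the $a$-function, the distinguished involutions and the leading coefficients, and P1--P15 can then be checked by inspection. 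For $F_4$, the verification should rest on a finite but substantial case analysis, stratified by the finitely many essential ratios of the parameters $v_s, v_t$ that affect the $a$-function, and very likely supported by explicit computer calculation of the relevant Kazhdan--Lusztig structure constants.
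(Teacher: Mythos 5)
Your overall strategy matches the paper's: realise $\psi$ as the composite of Lusztig's homomorphism $\phi\colon\bH\to\bJ_A$ with a map $\sigma\colon\bJ_A\to A[W]$, and reduce to verifying {\bf P1}--{\bf P15}, with types $I_2(m)$ and $F_4$ (two parameters) as the outstanding cases. The genuine divergence is in how $\sigma$ is built. You propose matching Wedderburn decompositions of $\bJ_K$ and $K[W]$ over the fraction field via Tits' deformation, and then checking (a) ``cell by cell'' at the specialisation $v_s=1$. That route is awkward: Wedderburn matching over $K$ does not canonically produce a map defined over $A$, and without integrality control the claimed compatibility with $\theta_1$ is not automatic. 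The paper instead works in the opposite direction: by Lusztig's Prop.\ 18.12(a) the kernel of $\phi_1 := \theta_1\circ\phi \colon R[W]\to\bJ_R$ is nilpotent, so since $R[W]$ is semisimple, $\phi_1$ is an isomorphism; taking $\alpha:=\phi_1^{-1}$ and extending scalars back up to $A$ gives $\sigma=\alpha_A$. Then $\psi=\alpha_A\circ\phi$ is defined over $A$ from the start, property (a) is true by construction (specialise, get $\alpha\circ\phi_1=\mathrm{id}$), and (b) follows from (a) by the formal determinant argument. No cell-by-cell verification at $v_s=1$ is needed.

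Two further points. First, you characterise the verification of {\bf P1}--{\bf P15} for $I_2(m)$ and $F_4$ as ``direct combinatorial computation'' and ``case analysis with computer support''; the paper's actual technical contribution is the framework {\bf E1}--{\bf E4} built on leading matrix coefficients of orthogonal representations (Sections 3--4), which reduces {\bf P1}--{\bf P14} to checkable statements about $p_{y,w}$, $\mu_{y,w}^s$, the cell partition, and the known invariants $\ba_\lambda$, with {\bf P15} handled separately via Remark~\ref{proofp15} and Lemma~\ref{Mp15prime}. Second, to reach the generality of the theorem statement for type $B_n$ (where {\bf P1}--{\bf P15} are only known for the asymptotic monomial order $b>ra$), the paper observes that the \emph{statement} of Theorem~\ref{MisoH} does not mention the monomial order, so it suffices to exhibit \emph{one} monomial order on the universal group $\Gamma_0$ where {\bf P1}--{\bf P15} hold (Corollary~\ref{Mp115b}), and then specialise. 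Your proposal, which implicitly assumes {\bf P1}--{\bf P15} ``in the full generality of unequal parameters'', glosses over this, and as things stand that stronger assertion is not available in type $B_n$.
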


In particular, (b) implies that, if $F$ is a splitting field for $W$, then 
$\bH_K\cong K[W]$ is a split semisimple algebra. Recall that it is known that
$F_0={\Q}\bigl(\cos(2\pi/m_{st}\bigr) \mid s,t \in S) \subseteq \R$ is a 
splitting field for $W$; see \cite[Theorem~6.3.8]{gepf}. (Here, $m_{st}$
denotes the order of $st$ in $W$.) Note that $F_0=\Q$ if $W$ is a finite Weyl 
group, that is, if $m_{st} \in \{2,3, 4,6\}$ for all $s,t \in S$. 

The above result shows that, when $W$ is finite, the algebra $\bH_K$ and
its representation theory can be understood, at least in principle, via
the isomorphism $\bH_K \stackrel{\sim}{\rightarrow} K[W]$; see \cite{gepf}
and \cite[\S 20--24]{Lusztig03} where this is further developped. 

This paper is organised as follows. In Section~2, we recall the basic facts
about Kazhdan--Lusztig bases and cells. We present Lusztig's conjectures 
{\bf P1}--{\bf P15} and explain, following \cite{Lusztig03}, how the 
validity of these conjectures leads to a proof of Theorem~\ref{Mmain}. In 
this argument, a special role is played by Lusztig's asymptotic ring $\bJ$ 
which is defined using the leading coefficients of the structure constants
of the Kazhdan--Lusztig basis. 

Now, {\bf P1}--{\bf P15} are known to hold for finite Weyl groups in the 
equal parameter case, thanks to a deep geometric interpretation of the 
Kazhdan--Lusztig basis; see Kazhdan--Lusztig \cite{KaLu2}, Lusztig \cite{Lusztig03}, Springer \cite{Spr}. The case of non-crystallographic finite Coxeter 
groups is covered by Alvis \cite{Alvis87} and DuCloux \cite{Fokko}. So it 
remains to consider the case of unequal parameters where $W$ is of type 
$B_n$, $F_4$ or $I_2(m)$ ($m$ even). Type $B_n$ (with two independent 
parameters and a certain monomial order on them) has been dealt with by 
Bonnaf\'e, Iancu and the author; see \cite{BI}, \cite{BI2}, \cite{geia06}, 
\cite{myrel06}. In Sections~3 and~4, we develop new general methods for 
verifying {\bf P1}--{\bf P15}, based on the ``leading matrix coefficients'' 
introduced in \cite{my02}. In Section~5, we show how this can be used to 
deal with $W$ of type $F_4$ and $I_2(m)$, for all choices of parameters. 
We also indicate how our methods lead to a new proof of {\bf P1}--{\bf P15} 
for type $H_4$, which is based on the results of Alvis \cite{Alvis87} and 
Alvis--Lusztig \cite{AlLu82} but which does not rely on DuCloux's 
computation \cite{Fokko} of all structure constants of the 
Kazhdan--Lusztig basis.

Finally, we put all the pieces into place and complete the proof of 
Theorem~\ref{Mmain}.

\section{The Kazhdan--Lusztig basis} \label{sec1}

It will be convenient to slightly change the setting of the introduction.
So let $(W,S)$ be a Coxeter system and $l\colon W \rightarrow \Z_{\geq 0}$ 
be the usual length function. Throughout this paper, $W$ will be finite.
Let $\Gamma$ be an abelian group (written additively). Following Lusztig 
\cite{Lusztig03}, a function $L \colon W \rightarrow \Gamma$ is called a 
{\em weight function} if $L(ww')=L(w)+L(w')$ whenever $w,w'\in W$ are such 
that $l(ww')=l(w)+l(w')$. Note that $L$ is uniquely determined by the 
values $\{L(s)\mid s \in S\}$. Furthermore, if $\{c_s \mid s \in S\}$ is 
a collection of elements in $\Gamma$ such that $c_s=c_t$ whenever $s,t 
\in S$ are conjugate in $W$, then there is (unique) weight function 
$L\colon W \rightarrow \Gamma$ such that $L(s)=c_s$ for all $s \in S$. 

Let $R\subseteq \C$ be a subring and $A=R[\Gamma]$ be the free $R$-module 
with basis $\{\varepsilon^g \mid g\in \Gamma\}$. There is a well-defined 
ring structure on $A$ such that $\varepsilon^g\varepsilon^{g'}=
\varepsilon^{g+g'}$ for all $g,g' \in \Gamma$. We write $1=\varepsilon^0 
\in A$. Given $a\in A$ we denote by $a_g$ the coefficient of 
$\varepsilon^g$, so that $a=\sum_{g\in \Gamma} a_g\,\varepsilon^g$.
Let $\bH=\bH_A(W,S,L)$ be the {\em generic Iwahori--Hecke algebra} over $A$
with parameters $\{v_s \mid s\in S\}$ where $v_s:=\varepsilon^{L(s)}$ for
$s\in S$. This an associative algebra which is free as an $A$-module, with
basis $\{T_w\mid w \in W\}$. The multiplication is given by the rule
\[ T_sT_w=\left\{\begin{array}{cl} T_{sw} & \quad \mbox{if $l(sw)>l(w)$},\\
T_{sw}+(v_s-v_s^{-1})T_w & \quad \mbox{if $l(sw)<l(w)$},\end{array}
\right.\]
where $s\in S$ and $w\in W$. The element $T_1$ is the identity element.

\begin{exmp} \label{Mweightint} Assume that $\Gamma=\Z$. Then $A$ is
nothing but the ring of Laurent polynomials over $R$ in an
indeterminate~$\varepsilon$; we will usually denote $v=\varepsilon$. 
Then $\bH$ is an associative algebra over $A=R[v,v^{-1}]$ with
relations:
\[ T_sT_w=\left\{\begin{array}{cl} T_{sw} & \quad \mbox{if $l(sw)>l(w)$},\\
T_{sw}+(v^{c_s}-v^{-c_s})T_w & \quad \mbox{if $l(sw)<l(w)$},\end{array}
\right.\]
where $s\in S$ and $w\in W$.  
This is the setting of Lusztig \cite{Lusztig03}.
\end{exmp}

\begin{exmp} \label{Masym} (a) Assume that $\Gamma=\Z$ and $L$ is constant 
on $S$; this case will be referred to as the {\em equal parameter case}. 
Note that we are automatically in this case when $W$ is of type $A_{n-1}$, 
$D_n$, $I_2(m)$ where $m$ is odd, $H_3$, $H_4$, $E_6$, $E_7$ or $E_8$ (since
all generators in $S$ are conjugate in $W$).

(b) Assume that $W$ is finite and irreducible. Then unequal parameters
can only arise in types $B_n$, $I_2(m)$ where $m$ is even, and $F_4$.
\end{exmp}

\begin{exmp} \label{Mrem12}
A ``universal'' weight function is given as follows. Let $\Gamma_0$ be the
group of all tuples $(n_s)_{s \in S}$ where $n_s \in \Z$ for all $s \in S$
and $n_s=n_t$ whenever $s,t\in S$ are conjugate in $W$.  (The addition is
defined componentwise). Let $L_0\colon W \rightarrow \Gamma_0$ be the
weight function given by sending $s\in S$ to the tuple $(n_t)_{t \in S}$
where $n_t=1$ if $t$ is conjugate to $s$ and $n_t=0$, otherwise. Let 
$A_0=R[\Gamma_0]$ and $\bH_0=\bH_{A_0}(W,S,L_0)$ be the associated 
Iwahori--Hecke algebra, with parameters $\{v_s\mid s \in S\}$. Then $A_0=
R[\Gamma_0]$ is nothing but the ring of Laurent polynomials in 
indeterminates $v_s$ ($s\in S$) with coefficients in $R$, where $v_s=v_t$ 
whenever $s,t\in S$ are conjugate in $W$. Furthermore, if $S'\subseteq S$ 
is a set of representatives for the classes of $S$ under conjugation, then 
$\{v_s \mid s \in S'\}$ are algebraically independent.
\end{exmp}

\begin{rem} \label{univers1}
Let $k$ be any commutative ring (with $1$) and assume we are given
a collection of elements $\{\xi_s \mid s \in S\} \subseteq k^\times$ such
that $\xi_s=\xi_t$ whenever $s,t \in S$ are conjugate in $W$. Then we
have an associated Iwahori--Hecke algebra $H=H_k(W,S,\{\xi_s\})$ over $k$.
Again, this is an associative algebra; it is free as a $k$-module with
basis $\{T_w\mid w \in W\}$. The multiplication is given by the rule
\[ T_sT_w=\left\{\begin{array}{cl} T_{sw} & \quad \mbox{if $l(sw)>l(w)$},\\
T_{sw}+(\xi_s-\xi_s^{-1})T_w & \quad \mbox{if $l(sw)<l(w)$},\end{array}
\right.\]
where $s\in S$ and $w\in W$. Now let $A_0$ be as in Example~\ref{Mrem12},
 where $R=\Z$. Then we can certainly find a (unique) unital ring 
homomorphism $\theta_0 \colon A_0\rightarrow k$ such that $\theta_0(v_s)= 
\xi_s$ for all $s \in S$. Regarding $k$ as an $A_0$-module (via $\theta_0$),
we find that $H$ is obtained by extension of scalars from $\bH_0$:
\[ H_k(W,S,\{\xi_s\})\cong k \otimes_A \bH_0.\]
We conclude that $H_k(W,S,\{\xi_s\})$ can always be obtained by 
``specialisation'' from the ``universal'' generic Iwahori--Hecke algebra 
$\bH_0$.
\end{rem}

We now recall the basic facts about the Kazhdan--Lusztig basis of
$\bH$, following Lusztig \cite{Lusztig83}, \cite{Lusztig03}. For this
purpose, we need to assume that $\Gamma$ admits a total ordering $\leq$
which is compatible with the group structure, that is, whenever $g,g',h
\in \Gamma$ are such that $g\leq g'$, then $g+h\leq g'+h$. Such an order 
on $\Gamma$ will be called a {\em monomial order}. One readily checks that 
this implies that $A=R[\Gamma]$ is an integral domain; we usually reserve 
the letter $K$ to denote its field of fractions. We will assume throughout 
that 
\[ L(s)>0 \quad \mbox{for all $s \in S$}.\]
Now, there is a unique ring involution $A\rightarrow A$, $a \mapsto 
\bar{a}$, such that $\overline{\varepsilon^g}=\varepsilon^{-g}$ for all 
$g\in\Gamma$. We can extend this map to a ring involution $\bH \rightarrow 
\bH$, $h \mapsto \overline{h}$, such that
\[ \overline{\sum_{w \in W} a_w T_w}=\sum_{w \in W} \bar{a}_w
T_{w^{-1}}^{-1} \qquad (a_w \in A).\]
We define $\Gamma_{\geq 0}=\{g\in \Gamma\mid g\geq 0\}$ and denote by
$\Z[\Gamma_{\geq 0}]$ the set of all integral linear combinations of
terms $\varepsilon^g$ where $g\geq 0$. The notations $\Z[\Gamma_{>0}]$,
$\Z[\Gamma_{\leq 0}]$, $\Z[\Gamma_{<0}]$ have a similar meaning.

\begin{thm}[Kazhdan--Lusztig \protect{\cite{KaLu}}, Lusztig
\protect{\cite{Lusztig83}, \cite{Lusztig03}}] \label{Mklbase} For each $w
\in W$, there exists a unique $C_w'\in \bH$ (depending on $\leq$) such that
\begin{itemize}
\item $\overline{C}_w'=C_w'$ and
\item $C_w'=T_w+\sum_{y \in W} p_{y,w} T_y$ where $p_{y,w}\in 
{\Z}[\Gamma_{<0}]$ for all $y \in W$.
\end{itemize}
The elements $\{C_w'\mid w\in W\}$ form an $A$-basis of $\bH$, and we have
$p_{y,w}=0$ unless $y<w$ (where $<$ denotes the Bruhat--Chevalley order on 
$W$).
\end{thm}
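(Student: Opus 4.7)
The plan is to follow the classical inductive argument of Kazhdan--Lusztig \cite{KaLu}, in the form adapted by Lusztig \cite{Lusztig83} to monomial-ordered $\Gamma$ and unequal parameters. The one driving fact, using totality of the order on $\Gamma$, is that any $a\in A$ with $\bar a=a$ and $a\in R[\Gamma_{<0}]$ must vanish: the coefficient of $\varepsilon^g$ (for any $g<0$) equals that of $\varepsilon^{-g}$, which is zero since $-g>0$ lies outside the support. A preliminary induction on $l(z)$, starting from $\overline{T_s}=T_s^{-1}=T_s+(v_s^{-1}-v_s)T_1$ and using that the bar map is a ring homomorphism of $\bH$ (as one checks on the quadratic relations), establishes the Bruhat-upper triangularity
\[ \overline{T_z}=T_z+\sum_{u<z}r_{u,z}T_u, \qquad r_{u,z}\in A. \]

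Uniqueness is now immediate. If $C_w'$ and $\tilde C_w'$ both satisfy the two bullets, their difference $D=\sum_{y<w}d_y T_y$ is bar-invariant with every $d_y\in\Z[\Gamma_{<0}]$. Assuming $D\neq 0$, fix $y_0$ Bruhat-maximal with $d_{y_0}\neq 0$. Matching the coefficient of $T_{y_0}$ on both sides of $D=\bar D$, and using the upper triangularity above together with the maximality of $y_0$ to discard every other contribution, one gets $d_{y_0}=\overline{d_{y_0}}$; the driving fact then forces $d_{y_0}=0$, contradicting the choice.

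For existence I would induct on $l(w)$, with base cases $C_1':=T_1$ and $C_s':=T_s+v_s^{-1}T_1$ for $s\in S$ (bar-invariance is a direct check, and $v_s^{-1}\in\Z[\Gamma_{<0}]$ because $L(s)>0$). For $l(w)\ge 2$, pick $s\in S$ with $sw<w$ and form the bar-invariant product $h:=C_s'\,C_{sw}'$. Expanding via the multiplication rule together with the inductive form of $C_{sw}'$ yields $h=T_w+\sum_{y<w}h_y T_y$ with Bruhat support inside $\{y\le w\}$ (standard bookkeeping using $y'\le sw \Rightarrow sy'\le w$). I would then correct $h$ into $C_w'$ by subtracting an integral combination $\sum_{y<w}\mu_y C_y'$, processing the $y$ in decreasing Bruhat order: at stage $y$, the uniqueness argument re-run on the running element forces the current coefficient of $T_y$, modulo $\Z[\Gamma_{<0}]$, to reduce to a single bar-invariant integer $\mu_y\in\Z$, and subtracting $\mu_y C_y'$ removes it without disturbing rows already fixed or enlarging the Bruhat support.

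The principal obstacle is this correction step. One must verify carefully that, at the precise moment $y$ is processed, the ``nonnegative-$\Gamma$ part'' of the current $h_y$ really is a pure scalar in $\Z$, rather than some arbitrary element of $\Z[\Gamma_{\ge 0}]$, so that a single integer multiple of $C_y'$ suffices to clean it up while preserving bar-invariance and the already-achieved $\Z[\Gamma_{<0}]$-condition on higher rows. This reduction follows, by the same maximality argument used for uniqueness, from bar-invariance of the running element together with the upper triangularity of $\overline{T_z}$, applied now at each intermediate stage of the correction. Integrality of the $p_{y,w}$ (in $\Z[\Gamma_{<0}]$ rather than merely $R[\Gamma_{<0}]$) is propagated by the fact that the multiplication rule has $\Z$-coefficients and every $\mu_y$ lies in $\Z$, completing the induction.
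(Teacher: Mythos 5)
The paper does not re-prove this theorem; it simply cites Kazhdan--Lusztig and Lusztig, and your proposal reconstructs the standard inductive argument that those references use. The triangularity of $\overline{T_z}$, the ``driving fact'' (a bar-invariant element of $\Z[\Gamma_{<0}]$ vanishes), and the uniqueness argument via a Bruhat-maximal $y_0$ are all correct.

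However, there is a genuine error in the existence step: the claim that each correction coefficient $\mu_y$ is an integer is false in the unequal-parameter setting, which is the whole point of this paper. Bar-invariance of the running element $h'$, via the triangularity $\overline{T_z}=T_z+\sum_{u<z}r_{u,z}T_u$, constrains only the antisymmetric part $h'_y-\overline{h'_y}$; it does \emph{not} force the nonnegative part of $h'_y$ to be a scalar. The paper's own computation in type $I_2(m)$ with $L(s_1)>L(s_2)>0$ gives $\bC_{1_1}\bC_{2_2}=\bC_{1_3}+\zeta\bC_{1_1}$ with $\zeta=v_{s_1}v_{s_2}^{-1}+v_{s_1}^{-1}v_{s_2}$; transporting through the $\dagger$-automorphism yields $C_{1_1}'C_{2_2}'=C_{1_3}'+\zeta\,C_{1_1}'$, so the correction at $y=s_1$ is $\zeta$, which is bar-invariant but not an integer (and indeed $\zeta\bmod\Z[\Gamma_{<0}]=\varepsilon^{L(s_1)-L(s_2)}$, not a constant). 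The paper accordingly records $\mu_{y,w}^s\in A$, not $\mu_{y,w}^s\in\Z$, in its statement of the Kazhdan--Lusztig multiplication rule. The familiar conclusion ``$\mu\in\Z$'' is an artifact of the equal-parameter degree bound $\deg p_{y,w}\leq\tfrac{1}{2}(l(w)-l(y)-1)$, which is not available here.

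The construction itself is easily repaired, and you do not actually need $\mu_y\in\Z$ for anything: at stage $y$, write $h'_y=\sum_g a_g\varepsilon^g$ with $a_g\in\Z$ and set $\mu_y:=a_0+\sum_{g>0}a_g\bigl(\varepsilon^g+\varepsilon^{-g}\bigr)$. This $\mu_y$ is the unique bar-invariant element of $\Z[\Gamma]$ with $h'_y-\mu_y\in\Z[\Gamma_{<0}]$; subtracting $\mu_y C_y'$ preserves bar-invariance, does not disturb already-processed rows, keeps all Laurent coefficients in $\Z$, and brings the $T_y$-coefficient into $\Z[\Gamma_{<0}]$. Integrality of the $p_{y,w}$ then still follows since both the multiplication rule and the $\mu_y$ live over $\Z[\Gamma]$ -- you only ever needed $\mu_y\in\Z[\Gamma]$ (bar-invariant), not $\mu_y\in\Z$.
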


Here we follow the original notation in \cite{KaLu}, \cite{Lusztig83}; the
element $C_w'$ is denoted by $c_w$ in \cite[Theorem~5.2]{Lusztig03}. As in
\cite{Lusztig03}, it will be convenient to work with the following
alternative version of the Kazhdan--Lusztig basis. We set $\bC_w=
(C_w')^{\dagger}$ where $\dagger\colon \bH \rightarrow 
\bH$ is the $A$-algebra automorphism defined by $T_s^\dagger=-T_s^{-1}$ 
($s \in S$); see \cite[3.5]{Lusztig03}. Note that $\overline{h} =
j(h)^\dagger=j(h^\dagger)$ for all $h \in \bH$ where $j \colon \bH
\rightarrow\bH$ is the ring involution such that $j(a)=\bar{a}$ for
$a \in A$ and $j(T_w)=(-1)^{l(w)}T_w$ for $w \in W$. Thus, we have
\begin{itemize}
\item $\overline{\bC}_w=j(C_w')=\bC_w$ and
\item $\bC_w=(-1)^{l(w)}T_w +
\sum_{y \in W} (-1)^{l(y)}\overline{p}_{y,w} T_y$ where $\overline{p}_{y,w}
\in {\Z}[\Gamma_{>0}]$.
\end{itemize}
Since the elements $\{\bC_w\mid w\in W\}$ form a basis of $\bH$, we can write
\[ \bC_x \bC_y=\sum_{z \in W} h_{x,y,z}\, \bC_z\qquad \mbox{for any
$x,y \in W$},\]
where $h_{x,y,z}=\overline{h}_{x,y,z} \in A$ for all $x,y,z\in W$. 
The structure constants $h_{x,y,z}$ can de described more explicitly in 
the following special case. Let $s \in S$ and $w \in W$. Then we have
\[ \renewcommand{\arraystretch}{1.2}
\bC_s\,\bC_w = \left\{\begin{array}{ll} \displaystyle{\bC_{sw}+
\sum_{\atop{y \in W}{sy<y<w}} \mu_{y,w}^s \, \bC_y} &\quad \mbox{if $sw>w$},
\\(v_s+v_s^{-1})\,\bC_w &\quad \mbox{if $sw<w$}, \end{array}\right.\]
where $\mu_{y,w}^s \in A$; see \cite[Theorem~6.6]{Lusztig03}.

\begin{rem} \label{Mcellrep} We refer to \cite[Chap.~8]{Lusztig03} for the 
definition of the preorders $\leq_{\cL}$, $\leq_{\cR}$, $\leq_{\cLR}$ and 
the corresponding equivalence relations $\sim_{\cL}$, $\sim_{\cR}$, 
$\sim_{\cLR}$ on $W$. (Note that these depend on the weight function $L$ 
and the monomial order on $\Gamma$.) The equivalence classes with respect 
to these relations are called left, right and two-sided cells of $W$, 
respectively.

Each left cell $\fC$ gives rise to a representation of $\bH$ (and of $W$).
This is constructed as follows (see \cite[\S 7]{Lusztig83}). Let $[\fC]_A$
be an $A$-module with a free $A$-basis $\{e_w \mid w \in \fC\}$. Then the
action of $\bC_w$ ($w \in W$) on $[\fC]_A$ is given by the Kazhdan--Lusztig
structure constants, that is, we have
\[ \bC_w.e_x = \sum_{y\in \fC} h_{w,x,y} \, e_y \qquad \mbox{for all
$x\in \fC$ and $w\in W$}.\]
Furthermore, let $\theta_1 \colon A\rightarrow R$ be the unique ring
homomorphism such that $\theta_1(\varepsilon^g)=1$ for all $g \in \Gamma$.
Extending scalars from $A$ to $R$ (via $\theta_1$), we obtain a module
$[\fC]_1:=R \otimes_A [\fC]_A$ for $R[W]=R\otimes_A \bH$.
\end{rem}

Following Lusztig \cite{Lusztig03}, given $z \in W$, we define
\[ \ba(z):= \min \{g\in\Gamma_{\geq 0}\mid \varepsilon^g\,h_{x,y,z} \in
\Z[\Gamma_{\geq 0}] \mbox{ for all $x,y\in W$}\}.\]
Thus, we obtain a function $\ba\colon W \rightarrow \Gamma$. (If 
$\Gamma=\Z$ with its natural order, then this reduces to the function
first defined by Lusztig \cite{Lu1}.) Given $x,y,z\in W$, we define 
$\gamma_{x,y,z^{-1}} \in \Z$ to be the constant term of 
$\varepsilon^{\ba(z)}\, h_{x,y,z}$, that is, we have
\[ \varepsilon^{\ba(z)}\, h_{x,y,z} \equiv \gamma_{x,y,z^{-1}} 
\quad \bmod {\Z} [\Gamma_{>0}].\]
Next, recall that $p_{1,z}$ is the coefficient of $T_1$ in the expansion
of $C_w'$ in the $T$-basis. By \cite[Prop.~5.4]{Lusztig03}, we have 
$p_{1,z} \neq 0$.  As in  \cite[14.1]{Lusztig03}, we define $\Delta(z)
\in \Gamma_{\geq 0}$ and $0\neq n_z \in \Z$ by the condition that
$\varepsilon^{\Delta(z)}p_{1,z} \equiv n_z \bmod {\Z}[\Gamma_{<0}]$. We set
\[ \cD=\{z \in W \mid \ba(z)=\Delta(z)\}.\]
Now Lusztig \cite[Chap.~14]{Lusztig03} has formulated the following
$15$ conjectures:
\begin{itemize}
\item[\bf P1.] For any $z\in W$ we have $\ba(z)\leq \Delta(z)$.
\item[\bf P2.] If $d \in \cD$ and $x,y\in W$ satisfy $\gamma_{x,y,d}\neq 0$,
then $x=y^{-1}$.
\item[\bf P3.] If $y\in W$, there exists a unique $d\in \cD$ such that
$\gamma_{y^{-1},y,d}\neq 0$.
\item[\bf P4.] If $z'\leq_{\cL\cR} z$ then $\ba(z')\geq \ba(z)$. Hence, if
$z'\sim_{\cL\cR} z$, then $\ba(z)=\ba(z')$.
\item[\bf P5.] If $d\in \cD$, $y\in W$, $\gamma_{y^{-1},y,d}\neq 0$, then
$\gamma_{y^{-1},y,d}=n_d=\pm 1$.
\item[\bf P6.] If $d\in \cD$, then $d^2=1$.
\item[\bf P7.] For any $x,y,z\in W$, we have $\gamma_{x,y,z}=\gamma_{y,z,x}$.
\item[\bf P8.] Let $x,y,z\in W$ be such that $\gamma_{x,y,z}\neq 0$. Then
$x\sim_{\cL} y^{-1}$, $y \sim_{\cL} z^{-1}$, $z\sim_{\cL} x^{-1}$.
\item[\bf P9.] If $z'\leq_{\cL} z$ and $\ba(z')=\ba(z)$, then $z'\sim_{\cL}z$.\item[\bf P10.] If $z'\leq_{\cR} z$ and $\ba(z')=\ba(z)$, then $z'\sim_{\cR}z$.
\item[\bf P11.] If $z'\leq_{\cL\cR} z$ and $\ba(z')=\ba(z)$, then
$z'\sim_{\cL\cR}z$.
\item[\bf P12.] Let $I\subseteq S$ and $W_I$ be the parabolic subgroup
generated by $I$. If $y\in W_I$, then $\ba(y)$ computed in terms of $W_I$
is equal to $\ba(y)$ computed in terms of~$W$.
\item[\bf P13.] Any left cell $\fC$ of $W$ contains a unique element
$d\in \cD$. We have $\gamma_{x^{-1},x,d}\neq 0$ for all $x\in \fC$.
\item[\bf P14.] For any $z\in W$, we have $z \sim_{\cL\cR} z^{-1}$.
\item[\bf P15.] If $x,x',y,w\in W$ are such that $\ba(w)=\ba(y)$, then
\[\sum_{y' \in W} h_{w,x',y'}\otimes h_{x,y',y}=
\sum_{y'\in W} h_{y',x',y} \otimes h_{x,w,y'}\quad \mbox{in
${\Z}[\Gamma] \otimes_{\Z} {\Z}[\Gamma]$}. \]
\end{itemize}
(The above formulation  of {\bf P15} is taken from Bonnaf\'e \cite{BI2}.)

\begin{rem} \label{equalgeo} Assume that we are in the equal parameter 
case; see Example~\ref{Masym}. In this case, $A=\Z[\Gamma]$ is nothing but 
the ring of Laurent polynomials in one variable~$v$. Suppose that all 
polynomials $p_{x,y}\in \Z[v^{-1}]$ and all structure constants $h_{x,y,z}
\in \Z[v,v^{-1}]$ have non-negative coefficients. Then Lusztig 
\cite[Chap.~15]{Lusztig03} shows that {\bf P1}--{\bf P15} follow. 

Now, if $(W,S)$ is a finite Weyl group, that is, if $m_{st}\in\{2,3,4,6\}$
for all $s,t \in S$, then the required non-negativity of the coefficients
is shown by using a deep geometric interpretation of the Kazhdan--Lusztig
basis; see Kazhdan--Lusztig \cite{KaLu2}, Springer \cite{Spr}. Thus,
{\bf P1}--{\bf P15} hold for finite Weyl groups in the equal parameter case.
If $(W,S)$ is of type $I_2(m)$ (where $m\not\in\{2,3,4,6\}$), $H_3$ or
$H_4$, the non-negativity of the coefficients has been checked
explicitly by Alvis \cite{Alvis87} and DuCloux \cite{Fokko}.

Note that simple examples show that the coefficients of the polynomials
$p_{y,w}$ or $h_{x,y,z}$ may be negative in the presence of unequal 
parameters; see Lusztig \cite[p.~106]{Lusztig83}, \cite[\S 7]{Lusztig03}. 
\end{rem}

We now use {\bf P1}--{\bf P15} to perform the following constructions,
following Lusztig \cite{Lusztig03}. Let $\bJ$ be the free ${\Z}$-module 
with basis $\{t_w\mid w\in W\}$. We define a bilinear product on $\bJ$ by
\[ t_xt_y=\sum_{z\in W} \gamma_{x,y,z^{-1}}\, t_z \qquad (x,y\in W).\]

\begin{rem} \label{MJinvers} By \cite[5.6]{Lusztig03}, the map 
$\bH \rightarrow \bH$ defined by $\bC_w\mapsto \bC_{w^{-1}}$ ($w \in W$)
is an anti-involution; so we have $h_{x,y,z}=h_{y^{-1},x^{-1},z^{-1}}$ for
all $w,x,y,z \in W$. In particular, this implies that $\ba(z)=\ba(z^{-1})$
for all $z \in W$. By \cite[13.9]{Lusztig03}, the map $\bJ\rightarrow \bJ$ 
defined by $t_w \mapsto t_{w^{-1}}$ ($w\in W$) also is an anti-involution 
of $\bJ$; so we have $\gamma_{x,y,z}=\gamma_{y^{-1},x^{-1},z^{-1}}$ for 
all $x,y,z \in W$.
\end{rem}

\begin{thm}[Lusztig \protect{\cite[Chap.~18]{Lusztig03}}] \label{MJasymp}
Assume that {\bf P1--P15} hold. Then $\bJ$ is an associative ring with
identity element $1_{\bJ}=\sum_{d\in \cD} n_dt_d$. Let $\bJ_A=A
\otimes_{\Z} \bJ$. Then we have a unital homomorphism of $A$-algebras
\[\phi \colon \bH \rightarrow \bJ_A, \qquad \bC_w \mapsto
\sum_{\atop{z\in W,d\in \cD}{\ba(z)=\ba(d)}} h_{w,d,z}\,n_d\,t_z,\]
\end{thm}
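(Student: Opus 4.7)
The plan is to proceed in three stages: (i) establish associativity of the bilinear product on $\bJ$, (ii) verify that $1_\bJ = \sum_{d\in\cD} n_d\,t_d$ is a two-sided identity, and (iii) define $\phi$ and check it is a unital $A$-algebra homomorphism.

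For (i), the starting point is associativity in $\bH$, which for any $x,x',y,z\in W$ yields
\[ \sum_w h_{x,x',w}\,h_{w,y,z} \;=\; \sum_w h_{x',y,w}\,h_{x,w,z}. \]
The target is the analogous identity with each $h$ replaced by the corresponding $\gamma$. First, I would observe, using P4 together with P7, P8 and Remark~\ref{MJinvers}, that any nonzero $\gamma_{a,b,c^{-1}}$ forces $a,b,c$ to lie in a single two-sided cell, hence $\ba(a)=\ba(b)=\ba(c)$. This confines the sums above to a common $\ba$-stratum. Multiplying by a suitable monomial in $\varepsilon$ and applying P15 (with indices arranged so that its hypothesis $\ba(w)=\ba(y)$ is satisfied) one extracts the constant terms in two independent $\varepsilon$-factors simultaneously, producing precisely the $\gamma$-identity required. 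The bivariate tensor-product formulation of P15 is essential here: single-variable associativity in $\bH$ would only yield an identity on products of leading coefficients, not the separate identity of $\gamma$-sums needed.

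For (ii), one computes
\[ 1_\bJ \cdot t_y \;=\; \sum_{d\in\cD,\,z\in W} n_d\,\gamma_{d,y,z^{-1}}\,t_z. \]
By P7, $\gamma_{d,y,z^{-1}}=\gamma_{y,z^{-1},d}$, and by P2 (applied with $d\in\cD$ in the third slot) this forces $z=y$. Hence only $t_y$ survives, with coefficient $\sum_{d\in\cD} n_d\,\gamma_{d,y,y^{-1}}$. Using P3 and P13, exactly one $d_0\in\cD$ makes $\gamma_{d_0,y,y^{-1}}=\gamma_{y,y^{-1},d_0}$ nonzero, and that value equals $n_{d_0}$ by P5 (applied with $y^{-1}$ in place of $y$). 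Since $n_{d_0}^2=1$, the sum equals $1$, so $1_\bJ\cdot t_y=t_y$. The symmetric relation $t_y\cdot 1_\bJ=t_y$ is obtained by conjugating with the anti-involution $t_w\mapsto t_{w^{-1}}$ of Remark~\ref{MJinvers}, which fixes $1_\bJ$ by P6.

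For (iii), the defining sum for $\phi(\bC_w)$ is automatically finite since $W$ is. Multiplicativity is verified by expanding both $\phi(\bC_x\bC_y)=\sum_u h_{x,y,u}\,\phi(\bC_u)$ and $\phi(\bC_x)\phi(\bC_y)$ in the $t$-basis and comparing coefficients; the resulting identity has the same shape as that in (i), with additional scalar factors $n_d$ and $n_{d'}$, and is again handled by P15 applied to leading coefficients of the $\bH$-associativity. Unitality follows from $\bC_1=1$, which gives $h_{1,d,z}=\delta_{d,z}$ and therefore $\phi(1)=\sum_{d\in\cD} n_d\,t_d=1_\bJ$. The main obstacle throughout is the bookkeeping of $\ba$-values: one must verify at each invocation of P15 that its hypothesis $\ba(w)=\ba(y)$ holds, which requires a consistent combination of P4 (monotonicity of $\ba$ along $\leq_\cLR$), P7 (cyclic symmetry of $\gamma$), P8 (cell constraints on nonzero $\gamma$), P11 and P14. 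Once this accounting is in place, P15 supplies exactly the bivariate identity whose constant terms yield the desired equalities.
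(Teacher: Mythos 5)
Your overall three-stage structure (associativity of $\bJ$, identity element, homomorphism $\phi$) matches the logical shape of the argument. Note, however, that the paper does not reprove this theorem: it cites Lusztig \cite[Chap.~18]{Lusztig03} directly, and Remark~\ref{note1} explains that the formula stated here differs from Lusztig's only by composing with the ring involution $t_w\mapsto\hat{n}_w\hat{n}_{w^{-1}}t_w$. Your stage (ii) is correct and is essentially Lusztig's argument, using \textbf{P2}, \textbf{P3}, \textbf{P5}, \textbf{P6}, \textbf{P7} in exactly the way you indicate.

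There is, however, a genuine error in stage (i). You assert that the tensor-product formulation of \textbf{P15} is ``essential'' for associativity of $\bJ$ and that ordinary $\bH$-associativity ``would only yield an identity on products of leading coefficients.'' This is wrong: associativity of $\bJ$ needs only \textbf{P1}, \textbf{P4}, \textbf{P7}, \textbf{P8}, and a plain constant-term extraction suffices. Indeed, the target identity is
\[ \sum_u \gamma_{x,y,u^{-1}}\,\gamma_{u,z,w^{-1}} \;=\; \sum_u \gamma_{y,z,u^{-1}}\,\gamma_{x,u,w^{-1}}. \]
Starting from $\sum_u h_{x,y,u}h_{u,z,w}=\sum_u h_{y,z,u}h_{x,u,w}$ and multiplying by $\varepsilon^{2\ba(w)}$: if $h_{x,y,u}h_{u,z,w}\neq 0$, then $w\leq_{\cLR}u$, so $\ba(w)\geq\ba(u)$ by \textbf{P4}; hence $\varepsilon^{2\ba(w)}h_{x,y,u}h_{u,z,w}=\varepsilon^{\ba(w)-\ba(u)}\bigl(\varepsilon^{\ba(u)}h_{x,y,u}\bigr)\bigl(\varepsilon^{\ba(w)}h_{u,z,w}\bigr)$ lies in $\Z[\Gamma_{\geq 0}]$, with constant term $\gamma_{x,y,u^{-1}}\gamma_{u,z,w^{-1}}$ when $\ba(u)=\ba(w)$ and $0$ otherwise. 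Since $\gamma_{u,z,w^{-1}}\neq 0$ already forces $\ba(u)=\ba(w)$ (via \textbf{P8} and \textbf{P4}), the restriction on $u$ is automatic, and constant terms may be compared term by term. The same works on the right, and the $\gamma$-identity follows with no bivariate device whatsoever.

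Where \textbf{P15} genuinely enters is stage (iii), the multiplicativity of $\phi$, and there your sketch understates the difficulty by saying the identity ``has the same shape as that in (i).'' It does not: one must compare a sum of the form $\sum_{u,d} h_{x,y,u}h_{u,d,w}\,n_d$ against a sum of the form $\sum_{z',z'',d',d''} h_{x,d',z'}h_{y,d'',z''}\,n_{d'}n_{d''}\,\gamma_{z',z'',w^{-1}}$, two expressions of quite different structure, and no single constant-term extraction from $\bH$-associativity aligns them. This is precisely what the two-variable formulation of \textbf{P15}, or equivalently \textbf{P15$^\prime$} via Lemma~\ref{Mp15prime}, was designed to handle, and it is the only place in the proof where \textbf{P15} is indispensable. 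In short, you have attributed the real difficulty to the easy step (i) and glossed over it in the hard step (iii).
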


The ring $\bJ$ will be called the {\em asymptotic algebra} associated to
$\bH$ (with respect to $\leq$). It first appeared in \cite{Lu2} in the
equal parameter case.

\begin{rem} \label{note1} In \cite[Theorem~18.9]{Lusztig03}, the formula for
$\phi$ looks somewhat different: instead of the factor $n_d$, there
is a factor $\hat{n}_z$ which is defined as follows. Given $z\in W$, there
is a unique element of $\cD$ such that $\gamma_{z,z^{-1},d}\neq 0$; then
$\hat{n}_z=n_d=\pm 1$ (see {\bf P3}, {\bf P5}, {\bf P13}). Now one easily 
checks, using {\bf P1}--{\bf P15}, that the map $t_w \mapsto \hat{n}_w 
\hat{n}_{w^{-1}}t_w$ defines a ring involution of $\bJ$. Composing 
Lusztig's homomorphism in \cite[18.9]{Lusztig03} with this involution, we 
obtain the above formula (which seems more natural; see, e.g., the
discussion in \cite[\S 5]{myedin08}).
\end{rem}

The structure of $\bJ$ is to some extent clarified by the following remark,
which is taken from \cite[20.1]{Lusztig03}.

\begin{rem} \label{MJcomp1} Assume that {\bf P1}--{\bf P15} hold. Recall
that $A=R[\Gamma]$ where $R \subseteq \C$ is a subring. Now assume that $R$
is a field. Let $\theta_1 \colon A\rightarrow R$ be the unique ring
homomorphism such that $\theta_1(\varepsilon^g)=1$ for all $g \in \Gamma$.
Then $R \otimes_A \bH=R[W]$. Via $\theta$ and extension of scalars, we 
obtain an induced homomorphism of $R$-algebras
\[ \phi_1 \colon R[W] \rightarrow \bJ_R=R \otimes_{\Z} J, \qquad
\bC_w\mapsto \sum_{\atop{z\in W,d\in \cD}{\ba(z)=\ba(d)}}
\theta(h_{w,d,z})\,n_d\,t_z.\]
Now, the kernel of $\phi_1$ is a nilpotent ideal in $R[W]$; see 
\cite[Prop.~18.12(a)]{Lusztig03}. Since $R[W]$ is a semisimple algebra, 
we conclude that $\phi_1$ is injective and, hence, an isomorphism.  In 
particular, we can now conclude that
\begin{itemize}
\item {\em $\bJ_R\cong R[W]$ is a semisimple algebra};
\item {\em $\bJ_R$ is split if $R$ is a splitting field for $W$}.
\end{itemize}
We can push this discussion even further. Let $P$ be the matrix of
$\phi \colon \bH \rightarrow \bJ_A$ with respect to the standard bases
of $\bH$ and $\bJ_A$. Let $P_1$ be the matrix obtained by applying $\theta_1$
to all entries of $P$. Then $P_1$ is the matrix of $\phi_1$ with
respect to the standard bases of $R[W]$ and $\bJ_R$. We have seen above
that $\det(P_1)\neq 0$. Hence, clearly, we also have $\det(P)\neq 0$.
Consequently, we obtain an induced isomorphism  $\phi_K \colon \bH_K 
\stackrel{\sim}{\rightarrow} \bJ_K$ where $K$ is the field of fractions 
of $A$. In particular, if $R$ is a splitting field for $W$, then $\bJ_R$ 
is split semisimple and, hence, $\bH_K \cong \bJ_K$ will be split
semisimple, too.
\end{rem}

We now obtain the following result which was first obtained by Lusztig 
\cite{Lu0} (for finite Weyl groups in the equal parameter case).

\begin{thm}[Lusztig] \label{MisoH} Assume that $R$ is a field and that 
{\bf P1}--{\bf P15} hold. Then there exists an algebra homomorphism 
$\psi\colon \bH \rightarrow A[W]$ with the following properties:
\begin{itemize}
\item[(a)] Let $\theta_1\colon A\rightarrow R$ be the unique ring
homomorphism such that $\theta_1(\varepsilon^g)=1$ for all $g \in \Gamma$. 
If we extend scalars from $A$ to $R$ (via $\theta_1$), then $\psi$ induces
the identity map.
\item[(b)] If we extend scalars from $A$ to $K$ (the field of fractions
of $A$), then $\psi$ induces an isomorphism $\psi_K\colon \bH_K 
\stackrel{\sim}{\rightarrow} K[W]$. In particular, $\bH_K$ is a semisimple
algebra, which is split if $R$ is a splitting field for~$W$.
\end{itemize}
\end{thm}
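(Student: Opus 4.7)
The plan is to realise $\psi$ as a composition $\alpha \circ \phi$, where $\phi \colon \bH \rightarrow \bJ_A$ is the homomorphism furnished by Theorem~\ref{MJasymp} and $\alpha \colon \bJ_A \stackrel{\sim}{\rightarrow} A[W]$ is an $A$-algebra isomorphism obtained by lifting $\phi_1^{-1}$ from $R$ to $A$. The key observation that makes this lifting available is that the asymptotic algebra $\bJ$ is defined over $\Z$: since the structure constants $\gamma_{x,y,z^{-1}}$ are integers, one has
\[ \bJ_A = A\otimes_{\Z}\bJ = A\otimes_R \bJ_R \quad \text{and}\quad A[W]=A\otimes_R R[W], \]
so any $R$-algebra map $\bJ_R\rightarrow R[W]$ extends uniquely to an $A$-algebra map $\bJ_A\rightarrow A[W]$.

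First I would invoke Remark~\ref{MJcomp1}, which under our hypotheses ({\bf P1}--{\bf P15} and $R$ a field) asserts that the specialisation $\phi_1\colon R[W]\rightarrow \bJ_R$ of $\phi$ along $\theta_1$ is an isomorphism of $R$-algebras. Let $\phi_1^{-1}$ denote its inverse, and set
\[ \alpha := \text{id}_A \otimes_R \phi_1^{-1} \colon \bJ_A \stackrel{\sim}{\longrightarrow} A[W]. \]
Define $\psi := \alpha\circ\phi\colon \bH \rightarrow A[W]$, which is automatically an $A$-algebra homomorphism.

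To verify (a), observe that $\theta_1\colon A\rightarrow R$ is a retraction of the inclusion $R\hookrightarrow A$; hence specialising $\alpha$ along $\theta_1$ recovers precisely $\phi_1^{-1}$, while $\phi$ specialises to $\phi_1$ by the very definition of the latter. So $\psi$ specialises to $\phi_1^{-1}\circ\phi_1=\text{id}_{R[W]}$. For (b), extending scalars to $K$ yields an isomorphism $\phi_K\colon \bH_K\stackrel{\sim}{\rightarrow}\bJ_K$ (by the last part of Remark~\ref{MJcomp1}) and, by base change of $\phi_1^{-1}$, an isomorphism $\alpha_K\colon \bJ_K\stackrel{\sim}{\rightarrow} K[W]$; hence $\psi_K$ is an isomorphism. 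Finally, if $R$ is a splitting field for $W$, then so is any extension of it, in particular $K$; therefore $K[W]$ is split semisimple, and thus so is $\bH_K\cong K[W]$.

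All the deep content has been absorbed into {\bf P1}--{\bf P15} and the machinery of Theorem~\ref{MJasymp} and Remark~\ref{MJcomp1}, so at this stage the argument is a short piece of formal bookkeeping with no real obstacle. The only point that requires any care is the construction of $\alpha$, which depends crucially on the integrality of the structure constants of $\bJ$ — the feature that lets one lift the $R$-algebra isomorphism $\phi_1^{-1}$ to an $A$-algebra isomorphism compatible with $\theta_1$.
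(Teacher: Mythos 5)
Your proposal is correct and follows essentially the same route as the paper: $\psi$ is built as $\alpha_A\circ\phi$, where $\alpha_A$ is the $A$-linear extension of $\phi_1^{-1}$, and (a) follows because $\theta_1$ retracts $R\hookrightarrow A$. The only cosmetic difference is in (b): you invoke the final part of Remark~\ref{MJcomp1} (which already gives that $\phi_K$ is an isomorphism) and compose with $\alpha_K$, whereas the paper re-runs the same determinant argument directly on $\psi$, observing that $\theta_1(\det Q)=1$ forces $\det Q\neq 0$ — both are equally valid.
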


\begin{proof} As in Remark~\ref{MJcomp1}, we have an isomorphism $\phi_1
\colon R[W] \stackrel{\sim}{\rightarrow}\bJ_R$. Let $\alpha:=\phi_1^{-1}
\colon \bJ_R \stackrel{\sim}{\rightarrow} R[W]$. By extension of scalars,
we obtain an isomorphism of $A$-algebras $\alpha_A\colon \bJ_A
\stackrel{\sim}{\rightarrow} A[W]$. Now set $\psi:=\alpha_A \circ
\phi\colon \bH \rightarrow A[W]$.

(a) If we extend scalars from $A$ to $R$ via $\theta_1$, then $\bH_R=
R[W]$. Furthermore, $\phi \colon \bH \rightarrow \bJ_A$ induces the map
$\phi_1$ already considered at the beginning of the proof. Hence
$\psi$ induces the identity map.

(b) This immediately follows from (a) by a formal argument: Let $Q$ be
the matrix of the $A$-linear map $\psi$ with respect to the standard
$A$-bases of $\bH$ and $A[W]$. We only need to show that $\det(Q) \neq 0$.
But, by (a), we have $\theta_1(\det(Q))=1$; in particular, $\det(Q)\neq 0$.

Finally, note that, if $R$ is a splitting field for $W$, then so is $K$.
Hence, in this case, $\bH_K \cong K[W]$ is a split semisimple algebra.
\end{proof}

Note that the {\em statement} of the above result does not make any
reference to the monomial order $\leq$ on $\Gamma$ or the corresponding
Kazhdan--Lusztig basis; these are only needed in the proof. 

\begin{rem} \label{cellJ} Assume that {\bf P1}--{\bf P14} hold. Then the 
partitions of $W$ into left, right and two-sided cells can be recovered
from the structure of $\bJ$. Indeed, given $x,y \in W$, write
$x\leftrightarrow_{\cL} y$ if there exists some $z \in W$ such that
$\gamma_{x,y^{-1},z} \neq 0$. Then one easily checks that $\sim_{\cL}$
is the transitive closure of $\leftrightarrow_{\cL}$. (Note that, by 
\cite[Prop.~18.4(a)]{Lusztig03}, the relations $\sim_{\cL}$ and 
$\leftrightarrow_{\cL}$ are actually the same when we are in the equal 
parameter case.) Thus, the left cells are determined by $\bJ$. Furthermore, 
we have $x \sim_{\cR} y$ if and only if $x^{-1} \sim_{\cL} y^{-1}$. Finally,
by {\bf P4}, {\bf P9}, the two-sided cells are the smallest subsets of $W$ 
which are at the same time unions of left cells and unions of right cells.
\end{rem}

\section{The $\ba$-function and orthogonal representations} \label{secort1}
The aim of this and the following section is to develop some new methods 
for verifying {\bf P1}--{\bf P15} for a given group $W$ and weight 
function $L$. These methods should not rely on any positivity properties 
or geometric interpretations as mentioned in Remark~\ref{equalgeo}, so that 
we may hope to be able to apply them in the general case of unequal 
parameters. 

One of the main problems in the verification of {\bf P1}--{\bf P15} is the 
determination of the $\ba$-function. Note that, if we just wanted to use 
the definition of $\ba(z)$, then we would have to compute all structure 
constants $h_{x,y,z}$ where $x,y \in W$---which is very hard to get a hold 
on. We shall now describe a situation in which this problem can be solved 
by a different approach, which is inspired by \cite[\S 4]{geia06}.

For the rest of this section, let us assume that $R=\R$. Then $R$ is a
splitting field for $W$; see \cite[6.3.8]{gepf}. The set of irreducible
representations of $W$ (up to isomorphism) will be denoted by
\[ \Irr(W)=\{E^\lambda \mid \lambda \in \Lambda\}\]
where $\Lambda$ is some finite indexing set and $E^\lambda$ is an
$R$-vectorspace with a given $R[W]$-module structure.  We shall also write
\[d_\lambda=\dim E^\lambda \qquad \mbox{for all $\lambda \in \Lambda$}.\]
Let $K$ be the field of fractions of $A$. By extension of scalars, we obtain 
a $K$-algebra $\bH_K=K\otimes_A \bH$.  This algebra is known to be split 
semisimple; see \cite[9.3.5]{gepf}. Furthermore, by Tits' Deformation 
Theorem, the irreducible representations of $\bH_K$ (up to isomorphism) are 
in bijection with the irreducible representations of $W$; see 
\cite[8.1.7]{gepf}.  Thus, we can write
\[ \Irr(\bH_K)=\{E^\lambda_\varepsilon \mid \lambda \in \Lambda\}.\]
The correspondence $E^\lambda \leftrightarrow E^\lambda_\varepsilon$ is 
uniquely determined by the following condition:
\[ \mbox{trace}\bigl(w,E^\lambda\bigr)=\theta_1\bigl(\mbox{trace}(T_w,
E^\lambda_\varepsilon)\bigr) \qquad \mbox{for all $w \in W$},\]
where $\theta_1 \colon A \rightarrow F$ is the unique ring homomorphism
such that $\theta_1(\varepsilon^g)=1$ for all $g \in \Gamma$. Note also 
that $\mbox{trace}\bigl(T_w,E^\lambda_\varepsilon\bigr) \in A$ for 
all $w\in W$. Note that all these statements can be proved without using
{\bf P1}--{\bf P15}.

The algebra $\bH$ is {\em symmetric}, with trace from $\tau \colon \bH 
\rightarrow A$ given by $\tau(T_1)=1$ and $\tau(T_w)=0$ for $1 \neq w 
\in  W$. The sets $\{T_w \mid w \in W\}$ and $\{T_{w^{-1}}\mid w \in W\}$ 
form a pair of dual bases. Hence we have the following orthogonality 
relations for the irreducible representations of $\bH_K$:
\[ \sum_{w \in W} \mbox{trace}\bigl(T_w,E^\lambda_\varepsilon\bigr)
\,\mbox{trace}\bigl(T_{w^{-1}},E_\varepsilon^\mu\bigr)=\left
\{\begin{array}{cl} d_\lambda\,\bc_\lambda & \quad \mbox{if $\lambda=\mu$},
\\ 0 & \quad \mbox{if $\lambda \neq\mu$};\end{array}\right.\]
see \cite[8.1.7]{gepf}. Here, $0 \neq \bc_\lambda \in A$ and,
following Lusztig, we can write
\[ \bc_\lambda=f_\lambda\, \varepsilon^{-2\ba_\lambda}+
\mbox{combination of terms $\varepsilon^g$ where $g>-2\ba_\lambda$},\]
where $\ba_\lambda \in \Gamma_{\geq 0}$ and $f_\lambda$ is a strictly 
positive real number; see \cite[9.4.7]{gepf}. These invariants are 
explicitly known for all types of $W$; see Lusztig \cite[Chap.~22]{Lusztig03}. 

We shall also need the basis which is dual to the Kazhdan--Lusztig basis.
Let $\{\bD_w \mid w \in W\}\subseteq \bH$ be such that $\tau(\bC_x\,
\bD_{y^{-1}})= \delta_{xy}$ for all $x,y\in W$. Then
\[ h_{x,y,z}=\tau(\bC_x \bC_y \bD_{z^{-1}}) \quad \mbox{for all $x,y,z \in
W$}.\]
One also shows that $\bD_w$ can be written as a sum of $(-1)^{l(w)}T_w$
and a ${\Z}[\Gamma_{>0}]$-linear combination of terms $T_y$ ($y \in W$);
see \cite[Chap.~10]{Lusztig03} or \cite[2.4]{my02}

We now recall the basic facts concerning the leading matrix coefficients
introduced in \cite{my02}.  Let us write 
\begin{align*}
A_{\geq 0}=\mbox{set of $R$-linear combinations of terms $\varepsilon^g$ 
where $g\geq 0$},\\
A_{>0}=\mbox{set of $R$-linear combinations of terms $\varepsilon^g$ where
$g>0$}.
\end{align*}
Note that $1+A_{>0}$ is multiplicatively closed. Furthermore, every 
element $x\in K$ can be written in the form
\[ x=r_x\,\varepsilon^{\gamma_x}\frac{1+p}{1+q}\qquad \mbox{where $r_x 
\in R$, $\gamma_x \in \Gamma$ and $p,q\in A_{>0}$};\]
note that, if $x\neq 0$, then $r_x$ and $\gamma_x$ indeed are
{\em uniquely determined} by $x$; if $x=0$, we have $r_0=0$ and we set
$\gamma_0:=+\infty$ by convention. We set
\[{\cO}:=\{x \in K \mid \gamma_x \geq 0\} \qquad \mbox{and}\qquad
{\fp}:=\{x \in K \mid \gamma_x >0\}.\]
Then it is easily verified that $\cO$ is a valuation ring in $K$, with
maximal ideal $\fp$. Note that we have
\[ \cO \cap A=A_{\geq 0} \qquad \mbox{and}\qquad \fp\cap A=A_{>0}.\]
We have a well-defined $R$-linear ring homomorphism $\cO \rightarrow R$
with kernel $\fp$. The image of $x\in \cO$ in $R$ is called the
{\em constant term} of $x$. Thus, the constant term of $x$ is $0$ if
$x\in \fp$; the constant term equals $r_x$ if $x\in \cO^\times$.

By \cite[Prop.~4.3]{my02}, each $E^\lambda_\varepsilon$ affords a so-called 
{\em orthogonal representation}. By \cite[Theorem~4.4 and 
Remark~4.5]{my02}, this implies that there exists a basis of
$E^\lambda_\varepsilon$ such that the corresponding matrix representation 
$\rho^\lambda \colon \bH_K\rightarrow M_{d_\lambda}(K)$ has the following 
properties. Let $\lambda \in \Lambda$ and $1\leq i,j \leq d_\lambda$. For 
any $h\in \bH_K$, we denote by $\rho^\lambda_{ij}(h)$ the $(i,j)$-entry of 
the matrix $\rho^\lambda(h)$. Then
\[ \varepsilon^{\ba_\lambda} \rho^\lambda_{ij}(T_w) \in \cO,  \qquad
\varepsilon^{\ba_\lambda} \rho^\lambda_{ij}(\bC_w) \in \cO,  \qquad
\varepsilon^{\ba_\lambda} \rho^\lambda_{ij}(\bD_w) \in \cO\]
for any $w\in W$ and
\[ (-1)^{l(w)}\varepsilon^{\ba_\lambda} \rho^\lambda_{ij}(T_w) \equiv 
\varepsilon^{\ba_\lambda} \rho^\lambda_{ij}(\bC_w) \equiv 
\varepsilon^{\ba_\lambda} \rho^\lambda_{ij}(\bD_w) \bmod\fp.\]
Hence, the above three elements of $\cO$ have the same constant term
which we denote by $c_{w,\lambda}^{ij}$. The constants 
$c_{w,\lambda}^{ij}\in R$ are called the {\em leading matrix coefficients}
of $\rho^\lambda$. Given $w \in W$, there exists some $\lambda \in 
\Lambda$ and $i,j\in \{1,\ldots,d_\lambda\}$ such that $c_{w,\lambda}^{ij}
\neq 0$. We use this fact to define the following relation.

\begin{defn} \label{leadcell} Let $\lambda \in \Lambda$ and $w \in W$.
We write $E^\lambda \leftrightsquigarrow_{L} w$ if $c_{w,\lambda}^{ij}
\neq 0$ for some $i,j\in \{1,\ldots,d_\lambda\}$. 

(This is in analogy to Lusztig \cite[20.2]{Lusztig03} or 
\cite[p.~139]{LuBook}; see Lemma~\ref{leadcell1} below.)
\end{defn}

One can show that ``$\leftrightsquigarrow_{L}$'' does not depend on the 
choice of the orthogonal representations $\rho^\lambda$ (see 
\cite[Remark~3.10]{myedin08}), but we don't need this here. For our
purposes, the characterisation of ``$\leftrightsquigarrow_{L}$'' given in
the following result will be sufficient.

Recall from Remark~\ref{Mcellrep} that every left cell $\fC$ of $W$ gives
rise to a left $R[W]$-module denoted by $[\fC]_1$.

\begin{lem} \label{leadcell1} Let $\lambda \in \Lambda$ and $\fC$ be a left
cell of $W$. Then $E^\lambda \leftrightsquigarrow_{L} w$ for some $w\in\fC$ 
if and only if $E^\lambda$ is a constituent of $[\fC]_1$. 
\end{lem}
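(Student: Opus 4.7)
The plan is to pass through the cell module $[\fC]_K := K \otimes_A [\fC]_A$ and compare two descriptions of the $\bH_K$-action on it: the one in the Kazhdan--Lusztig basis $\{e_x \mid x \in \fC\}$ via the structure constants $h_{w,x,y}$, and the block-diagonal one coming from the orthogonal matrix representations $\rho^\lambda$.

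First I would observe that, since $\bH_K$ is split semisimple, there is a decomposition $[\fC]_K \cong \bigoplus_{\mu \in \Lambda} (E^\mu_\varepsilon)^{\oplus m_\mu^\fC}$ for some multiplicities $m_\mu^\fC \geq 0$. Comparing traces of $T_w$ and specialising via $\theta_1$ (using that $T_w \mapsto w$ induces $R \otimes_A \bH \cong R[W]$, together with the semisimplicity of $R[W]$), these multiplicities coincide with the multiplicities of the constituents of $[\fC]_1$ as an $R[W]$-module. Thus the lemma reduces to the equivalence
\[ E^\lambda \leftrightsquigarrow_L w \text{ for some } w \in \fC \iff m_\lambda^\fC > 0. \]

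For the direction $(\Leftarrow)$, I would fix an $\bH_K$-equivariant embedding $\iota \colon E^\lambda_\varepsilon \hookrightarrow [\fC]_K$, write $\iota(v_j) = \sum_{x \in \fC} a_{j,x}\, e_x$ in terms of the orthogonal basis $v_1, \ldots, v_{d_\lambda}$ of $E^\lambda_\varepsilon$ with $a_{j,x} \in K$, and rescale $\iota$ by a power of $\varepsilon$ so that all $a_{j,x}$ lie in $\cO$ while at least one is a unit. The intertwining identity
\[ \sum_{i=1}^{d_\lambda} \rho^\lambda_{ij}(\bC_w)\, a_{i,y} = \sum_{x \in \fC} a_{j,x}\, h_{w,x,y}, \]
multiplied by $\varepsilon^{\ba_\lambda}$ and reduced modulo $\fp$, yields a non-trivial identity in $R$ whose left-hand side is a linear combination of the $c_{w,\lambda}^{ij}$. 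Choosing $y$ and $j$ so that $a_{j,y}$ specialises to a unit, not all $c_{w,\lambda}^{ij}$ with $w \in \fC$ can vanish, for otherwise the right-hand side would force every $a_{j,x}$ to specialise to $0$, contradicting the normalisation.

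For the direction $(\Rightarrow)$, I would use the central idempotent $e_\lambda \in \bH_K$ attached to $E^\lambda_\varepsilon$ (available from split semisimplicity alone, independently of \textbf{P1}--\textbf{P15}). The assumption $m_\lambda^\fC = 0$ gives $e_\lambda \cdot [\fC]_K = 0$; in particular $e_\lambda \cdot e_x = 0$ for every $x \in \fC$. Expanding $e_\lambda = (d_\lambda/\bc_\lambda) \sum_{w \in W} \chi^\lambda_\varepsilon(T_{w^{-1}})\, T_w$ via the Schur relations, passing to the $\bC_w$-basis and repeating the leading-term analysis modulo $\fp$ forces every $c_{w,\lambda}^{ij}$ with $w \in \fC$ to vanish.

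The main obstacle is the leading-term analysis: the quantities $a_{j,x}$, the structure constants $h_{w,x,y}$ and the matrix entries $\rho^\lambda_{ij}(\bC_w)$ live in different valuation strata of $K$, and one has to normalise $\iota$ as a map of $\cO$-lattices carefully so that the reduction of the intertwining relation modulo $\fp$ is a meaningful identity rather than a tautology $0=0$. This is precisely where the uniform control $\varepsilon^{\ba_\lambda}\rho^\lambda_{ij}(\bC_w) \in \cO$ provided by the orthogonal representation becomes indispensable.
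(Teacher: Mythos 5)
The paper proves this lemma in one line by invoking the explicit identity from \cite[Prop.~4.7]{my02},
\[
\frac{1}{f_\lambda}\sum_{k=1}^{d_\lambda}\sum_{w\in\fC}\bigl(c_{w,\lambda}^{ik}\bigr)^2
=\text{multiplicity of $E^\lambda$ in $[\fC]_1$},
\]
which, because $R=\R$ and the $c_{w,\lambda}^{ik}$ are real, settles both directions simultaneously: the left side is a sum of squares, hence positive if and only if some $c_{w,\lambda}^{ik}$ with $w\in\fC$ is nonzero. Your proposal does not invoke this identity, and that is precisely the missing idea. In the ``$\Leftarrow$'' step, your plan is to normalise the embedding $\iota$ so that $(a_{j,x})\in M(\cO)$ with a unit entry, and then reduce the intertwining relation $\sum_i\rho^\lambda_{ij}(\bC_w)a_{i,y}=\sum_x a_{j,x}h_{w,x,y}$ modulo $\fp$ after multiplying by $\varepsilon^{\ba_\lambda}$. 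The left side does reduce to $\sum_i c_{w,\lambda}^{ij}\bar a_{i,y}$, but the right side cannot be analysed term by term: the quantities $\varepsilon^{\ba_\lambda}h_{w,x,y}$ need not lie in $\cO$, since the natural bound on $h_{w,x,y}$ is governed by $\ba(y)$, and the equality $\ba(y)=\ba_\lambda$ for $y\in\fC$ is exactly what Proposition~\ref{cor1} establishes later, using {\bf E1}, which in turn relies on the present lemma. Concretely, the $\cO$-lattice $\bigoplus_x\cO e_x$ inside $[\fC]_K$ is not stable under $\bH$ (the structure constants in the $T$- and $\bC$-bases have negative-valuation terms), so your normalised $\iota$ is not a map of $\cO$-lattices in any useful sense, and the conclusion ``the right-hand side would force every $a_{j,x}$ to specialise to $0$'' does not follow: all one gets is that $\varepsilon^{\ba_\lambda}\sum_x a_{j,x}h_{w,x,y}\in\fp$, which constrains nothing about the $\bar a_{j,x}$ individually.

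For the ``$\Rightarrow$'' step, your central-idempotent plan is closer to what actually works, since $\bc_\lambda^{-1}$ carries the factor $\varepsilon^{2\ba_\lambda}$ that provides the needed valuation control; pushed through carefully (expanding in the $\bC$/$\bD$ dual bases rather than the $T$-basis and applying the Schur relations), it essentially rederives the cited multiplicity formula. But you would then be proving \cite[Prop.~4.7]{my02} rather than bypassing it, and you would still need the reality of $R$ to pass from ``multiplicity zero'' to ``every coefficient vanishes.'' I would recommend either invoking that proposition directly, as the paper does, or restructuring the argument around the Schur relations for the pair of dual bases $\{\bC_w\}$, $\{\bD_{w^{-1}}\}$ together with the uniform bounds $\varepsilon^{\ba_\lambda}\rho^\lambda_{ij}(\bC_w)\in\cO$, $\varepsilon^{\ba_\lambda}\rho^\lambda_{ij}(\bD_w)\in\cO$, which is the only place in the present section where control on leading terms is actually available.
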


\begin{proof} Let $i \in \{1,\ldots,d_\lambda\}$. The assertion
immediately follows from the identity
\[ \frac{1}{f_\lambda}\sum_{k=1}^{d_\lambda} \sum_{w\in \fC}
(c_{w,\lambda}^{ik})^2=\mbox{multiplicity of $E^\lambda$ in $[\fC]_1$}.\]
which was proved in \cite[Prop.~4.7]{my02}. 
\end{proof}

\begin{rem} \label{hyp1a} Let $w,w'\in W$ and $\lambda \in \Lambda$ be 
such that $E^\lambda \leftrightsquigarrow_L w$ and $E^\lambda 
\leftrightsquigarrow_{L} w'$. Let $\fC$, $\fC'$ be the left cells such 
that $w \in \fC$ and $w' \in \fC'$. By Lemma~\ref{leadcell1}, $E^\lambda$ 
is a constituent of both $[\fC]_1$ and $[\fC']_1$. Hence, $\mbox{Hom}_{W}
([\fC]_1,[\fC']_1)\neq 0$ and so $\fC,\fC'$ are contained in the same 
two-sided cell. In particular, $w\sim_{\cLR} w'$.

This argument also implies {\bf P14}, i.e., the assertion that 
$w \sim_{\cLR} w^{-1}$ for all $w \in W$. Indeed, choose $\lambda\in 
\Lambda$ such that $E^\lambda \leftrightsquigarrow_{L} w$, that is, 
$c_{w,\lambda}^{ij}\neq 0$ for some $i,j \in \{1,\ldots,d_{\lambda}\}$. By 
\cite[Theorem~4.4]{my02}, we also have $c_{w^{-1},\lambda}^{ji}=c_{w,
\lambda}^{ij}\neq 0$ and so $E^\lambda \leftrightsquigarrow_L w^{-1}$. 
Hence, the previous discussion shows that $w \sim_{\cLR} w^{-1}$, as claimed.

(This was first proved by Lusztig \cite[Lemma~5.2]{LuBook} in the equal 
parameter case. One can check that Lusztig's proof also carries over to the 
case of unequal parameters.)
\end{rem}

\begin{lem} \label{abound} Let $z \in W$ and $\lambda \in \Lambda$ be such
$E^\lambda \leftrightsquigarrow_{L} z$. Then $\ba(z)\geq \ba_\lambda$.
\end{lem}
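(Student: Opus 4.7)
The plan is to evaluate one carefully chosen trace on $\bH_K$ in two different ways and read off the desired inequality by comparing valuations. The hypothesis $E^\lambda \leftrightsquigarrow_L z$ means that $c_{z,\lambda}^{i_0 j_0}\ne 0$ for some pair $(i_0,j_0)$; by Remark~\ref{hyp1a} we also have $c_{z^{-1},\lambda}^{j_0 i_0}=c_{z,\lambda}^{i_0 j_0}\ne 0$. Intuitively, this nonvanishing says $\rho^\lambda(\bC_z)$ has a ``pole'' of order exactly $-\ba_\lambda$, and the aim is to force this pole to manifest in some $h_{x,y,z}$.

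First I construct auxiliary elements $a,b\in\bH_K$ that behave, under $\rho^\lambda$, as elementary matrices while vanishing under all other $\rho^\mu$. Using the matrix-coefficient form of Schur orthogonality in the symmetric algebra $\bH_K$,
\[\sum_w \rho^\lambda_{ba}(\bD_{w^{-1}})\,\rho^\mu_{cd}(\bC_w)\;=\;\delta_{\lambda\mu}\,\delta_{ac}\,\delta_{bd}\,\bc_\lambda\]
(the standard refinement of the character orthogonality quoted from \cite[8.1.7]{gepf}), I set
\[a:=\bc_\lambda^{-1}\sum_{x\in W}\rho^\lambda_{1,i_0}(\bD_{x^{-1}})\,\bC_x,\qquad b:=\bc_\lambda^{-1}\sum_{y\in W}\rho^\lambda_{j_0,1}(\bD_{y^{-1}})\,\bC_y.\]
Then $\rho^\lambda(a)=E_{i_0,1}$, $\rho^\lambda(b)=E_{1,j_0}$, so $\rho^\lambda(ab)=E_{i_0 j_0}$, while $\rho^\mu(ab)=0$ for $\mu\ne\lambda$. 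Writing $a=\sum_x\alpha_x\bC_x$ and $b=\sum_y\beta_y\bC_y$, the crucial point is that each $\alpha_x,\beta_y$ lies in $\varepsilon^{\ba_\lambda}\cO$: indeed $\bc_\lambda^{-1}$ has order $+2\ba_\lambda$ (from $\bc_\lambda=f_\lambda\varepsilon^{-2\ba_\lambda}+\cdots$) while $\varepsilon^{\ba_\lambda}\rho^\lambda_{i,j}(\bD_{w^{-1}})\in\cO$.

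Next I compute $\tau(ab\,\bD_{z^{-1}})$ in two ways. Via the character decomposition $\tau=\sum_\mu\bc_\mu^{-1}\chi^\mu$ only the $\mu=\lambda$ summand survives, and a short trace computation gives $\bc_\lambda^{-1}\rho^\lambda_{j_0,i_0}(\bD_{z^{-1}})$, which has order exactly $\ba_\lambda$ since its leading coefficient is the nonzero product $f_\lambda^{-1}c_{z^{-1},\lambda}^{j_0 i_0}$. On the other hand, substituting the $\bC$-basis expansions of $a,b$ and using the duality $\tau(\bC_x\bC_y\bD_{z^{-1}})=h_{x,y,z}$ yields
\[\tau(ab\,\bD_{z^{-1}})\;=\;\sum_{x,y\in W}\alpha_x\,\beta_y\,h_{x,y,z}.\]
Each $\alpha_x\beta_y$ lies in $\varepsilon^{2\ba_\lambda}\cO$, and the definition of $\ba(z)$ places $h_{x,y,z}$ in $\varepsilon^{-\ba(z)}\cO$; hence every summand, and so the whole sum, has order $\geq 2\ba_\lambda-\ba(z)$. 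Comparing the two expressions gives $\ba_\lambda\geq 2\ba_\lambda-\ba(z)$, i.e.\ $\ba(z)\geq\ba_\lambda$.

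The argument reduces to pure valuation bookkeeping once the matrix-unit elements $a,b$ are in place, and no conjectural hypothesis is invoked. The only ingredient not made explicit in the excerpt is the matrix-coefficient form of Schur orthogonality, but this follows from the character orthogonality in \cite[8.1.7]{gepf} by the standard primitive-idempotent derivation, so I do not anticipate any serious obstacle.
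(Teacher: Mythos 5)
Your proof is correct and essentially the same as the paper's: both extract $\bc_\lambda^{-1}\rho^\lambda_{j_0 i_0}(\bD_{z^{-1}})$ from a double sum $\sum_{x,y}(\cdots)\,h_{x,y,z}$ via the Schur relations for the dual bases $\{\bC_w\}$, $\{\bD_{w^{-1}}\}$, then bound valuations using $\varepsilon^{\ba_\lambda}\rho^\lambda_{ij}(\bD_{w^{-1}})\in\cO$ and the nonvanishing of $c_{z^{-1},\lambda}^{j_0 i_0}$. The paper reaches the same double sum by multiplying the trace expansion of $h_{x,y,z}$ by $\rho^\lambda_{ls}(\bD_{x^{-1}})\,\rho^\lambda_{rl}(\bD_{y^{-1}})$ and summing, then argues by contradiction from $\ba(z)<\ba_\lambda$; your packaging via the matrix-unit elements $a,b$ and a direct valuation comparison is a tidier presentation of the identical computation.
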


(A similar result was proved in \cite[Prop.~4.1]{geia06}, but under
additional assumptions. See also Lusztig \cite[Prop.~6.4]{Lu1} where this 
result was obtained in the equal parameter case, based on the geometric 
interpretation which is available there.)

\begin{proof} We begin by considering the structure constant $h_{x,y,z}$
for $x,y\in W$. We have $h_{x,y,z}=\tau(\bC_x \bC_y\bD_{z^{-1}})$. Now, 
by the general theory of symmetric algebras (see \cite[Chap.~7]{gepf}), we
have 
\[ \tau(h)=\sum_{\lambda \in \Lambda} \bc_\lambda^{-1} \mbox{trace}
(h,E^\lambda)= \sum_{\lambda \in \Lambda} \bc_\lambda^{-1}\,
\mbox{trace}\bigl(\rho^\lambda(h) \bigr)=\sum_{\lambda \in \Lambda}
\sum_{1\leq i\leq d_\lambda} \bc_\lambda^{-1}\, \rho^\lambda_{ii}(h),\]
for any $h \in \bH$. Since $\rho^\lambda(\bC_x\bC_y\bD_{z^{-1}})=
\rho^\lambda(\bC_x)\rho^\lambda (\bC_y)\rho^\lambda(\bD_{z^{-1}})$, we obtain
\[ h_{x,y,z}=\sum_{\mu\in \Lambda} \sum_{1\leq i,j,k\leq d_\mu}
\bc_\mu^{-1}\, \rho^\mu_{ij}(\bC_x)\,\rho^\mu_{jk}(\bC_y)\,
\rho^\mu_{ki}(\bD_{z^{-1}}).\]
We multiply this identity on both sides by $\rho^\lambda_{ls}
(\bD_{x^{-1}})\, \rho^\lambda_{rl} (\bD_{y^{-1}})$ (where $\lambda \in
\Lambda$ and $1\leq l,r,s\leq d_\lambda$) and sum over all $x,y\in W$.
Now, since $\{\bC_w \mid w \in w\}$ and $\{\bD_{w^{-1}} \mid w \in W\}$
form a pair of dual bases for $\bH$, we have the following Schur relations
(see \cite[Chap.~7]{gepf}):
\begin{equation*}
\sum_{w \in W} \rho_{ij}^\lambda(\bC_w)\,\rho^\mu_{kl}(\bD_{w^{-1}})=
\delta_{il}\delta_{jk} \delta_{\lambda\mu} \bc_\lambda,
\end{equation*}
where $\lambda,\mu \in \Lambda$, $1\leq i,j\leq d_\lambda$ and $1\leq k,l
\leq d_\mu$. Then a straightforward computation yields that
\[ \rho^\lambda_{rs}(\bD_{z^{-1}})=\sum_{x,y\in W}  \bc_{\lambda}^{-1}\,
\rho^\lambda_{ls}(\bD_{x^{-1}})\,\rho^\lambda_{rl} (\bD_{y^{-1}})\,
h_{x,y,z}.\]
Further multiplying by $\varepsilon^{\ba(z)}$ and noting that
$\bc_\lambda^{-1}=f_\lambda^{-1}\,\varepsilon^{2\ba_\lambda}/(1+g_\lambda)$
where $g_\lambda \in F[\Gamma_{>0}]$, we obtain
\[ \varepsilon^{\ba(z)}\,\rho^\lambda_{rs}(\bD_{z^{-1}})=
\sum_{x,y\in W} \frac{f_\lambda^{-1}}{1+g_\lambda}
\bigl(\varepsilon^{\ba_\lambda} \rho^\lambda_{ls}(\bD_{x^{-1}})\bigr)\,
\bigl(\varepsilon^{\ba_\lambda} \rho^\lambda_{rl} (\bD_{y^{-1}})\bigr)
\bigl(\varepsilon^{\ba(z)}\,h_{x,y,z}\bigr).\]
Now all terms in the above sum lie in $\cO$, hence the whole sum will lie in
$\cO$ and so $\varepsilon^{\ba(z)}\,\rho^\lambda_{rs}(\bD_{z^{-1}})\in
\cO$.

Now assume, if possible, that $\ba(z)<\ba_\lambda$. Then we could conclude
that the constant term of $\varepsilon^{\ba_\lambda}\,\rho^\lambda_{rs}
(\bD_{z^{-1}})$ is zero, that is, $c_{z^{-1},\lambda}^{rs}=0$, and this 
holds for all $1 \leq r,s\leq d_\lambda$. Since $\rho^\lambda$ is an 
orthogonal representation, \cite[Theorem~4.4]{my02} shows that then we 
also have $c_{z,\lambda}^{rs}=0$ for all $1\leq r,s\leq d_\lambda$, 
a contradiction.
\end{proof}

We will want to find conditions which ensure that we have 
equality in Lemma~\ref{abound}. Consider the following property:
\begin{itemize}
\item[{\bf E1.}] Let $x,y\in W$ and $\lambda,\mu \in \Lambda$ be such that
$E^\lambda \leftrightsquigarrow_{L} x$ and $E^\mu \leftrightsquigarrow_L y$. 
If $x \leq_{L} y$, then $\ba_\mu \leq \ba_\lambda$. 
In particular, if $x \in W$ and $\lambda, \mu \in \Lambda$ are such that  
$E^\lambda \leftrightsquigarrow_{L} x$ and $E^\mu \leftrightsquigarrow_L x$,
then $\ba_\lambda= \ba_\mu$. 
\end{itemize}
Assume that {\bf E1} holds and let $z \in W$. Then we define $\tilde{\ba}
(z)=\ba_\lambda$ where $\lambda\in \Lambda$ is such that $E^\lambda 
\leftrightsquigarrow_{L} z$. Note that $\tilde{\ba}(z)$ is well-defined 
by {\bf E1}.  Furthermore, we have:
\begin{itemize}
\item[{\bf E1'.}] If $x,y\in W$ are such that $x \leq_{\cLR} y$, then 
$\tilde{\ba}(y)\leq \tilde{\ba}(x)$. In particular, $\tilde{\ba}$ is
constant on two-sided cells.
\end{itemize}
Thus, Lemma~\ref{leadcell1} shows that, letting $\fC$ be the left cell 
containing $z\in W$, then 
\[ \tilde{\ba}(z)=\ba_\lambda \quad \mbox{if $E^\lambda$ is a constituent
of $[\fC]_1$}.\]
Now Lusztig \cite[20.6, 20.7]{Lusztig03} shows that, if {\bf P1}--{\bf P15}
hold, then {\bf E1} holds and we have $\ba(z)=\tilde{\ba}(z)$ for all 
$z \in W$. Our aim is to show that {\bf E1} is sufficient to prove the 
equality $\ba(z)=\tilde{\ba}(z)$ for all $z \in W$; see 
Proposition~\ref{cor1} below.  This will be one of the key steps in our 
verification of {\bf P1}--{\bf P15} for $W$ of type $F_4$ and $I_2(m)$.

%
%

\begin{lem} \label{lem1} Assume that {\bf E1} holds.  Let $w \in W$ and 
$\lambda \in \Lambda$.
\begin{itemize}
\item[(a)] If $\rho^\lambda(\bC_w) \neq 0$ then $\tilde{\ba}(w)\leq 
\ba_\lambda$.
\item[(b)] If $\rho^\lambda(\bD_{w^{-1}}) \neq 0$ then $\tilde{\ba}(w)\geq 
\ba_\lambda$.
\item[(c)] We have $\varepsilon^{\tilde{\ba}(w)}\rho^\lambda_{ij}
(\bD_{w^{-1}})\in \cO$ for all $i,j\in \{1,\ldots,d_\lambda\}$.
\end{itemize}
\end{lem}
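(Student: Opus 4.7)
My plan is to prove the three parts in the order (a), (b), (c): part (a) by a direct cell-theoretic argument, part (b) by induction using the identity derived in the proof of Lemma~\ref{abound}, and part (c) as an immediate corollary of (b) combined with the orthogonal representation bound.

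For (a), since $E^\lambda$ is a constituent of the regular representation of $\bH_K$, there exists a left cell $\fC'$ such that $E^\lambda$ appears in $[\fC']_1$; by the semisimplicity of $\bH_K$, the $E^\lambda_\varepsilon$-summand of $[\fC']_K$ is nonzero. The hypothesis $\rho^\lambda(\bC_w)\neq 0$ forces $\bC_w$ to act non-trivially on this summand and hence on $[\fC']_K$. Examining the defining action $\bC_w\cdot e_x=\sum_{y\in \fC'} h_{w,x,y}\,e_y$, we obtain some $h_{w,x,y}\neq 0$ with $x,y\in\fC'$. Since $h_{w,x,y}\neq 0$ forces $y\leq_{\cR}w$ (and $x,y\in\fC'$ means they are $\sim_{\cLR}$-equivalent), we conclude $\fC'\leq_{\cLR}w$; applying {\bf E1'} then gives $\tilde{\ba}(w)\leq \tilde{\ba}(\fC')=\ba_\lambda$.

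For (b), the $\bD$-basis does not enjoy an analogous cell action, so a direct adaptation of (a) is not available. Instead I plan to use the identity derived in the proof of Lemma~\ref{abound} (taking $z=w$ there):
\[
\bc_\lambda\,\rho^\lambda_{rs}(\bD_{w^{-1}}) \;=\; \sum_{x,y \in W} h_{x,y,w}\,\rho^\lambda_{ls}(\bD_{x^{-1}})\,\rho^\lambda_{rl}(\bD_{y^{-1}}),
\]
and to argue by induction on $w$ in the $\leq_{\cLR}$-preorder, with base case $w=1$ (where $\bD_1=1$, so $\rho^\lambda(\bD_1)=I_{d_\lambda}\neq 0$ and $\tilde{\ba}(1)$ is the maximum of $\tilde{\ba}$, making the claim trivial). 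In the inductive step, $\rho^\lambda(\bD_{w^{-1}})\neq 0$ produces some $x,y$ with $h_{x,y,w}\neq 0$ (so $w\leq_{\cR}x$ and $w\leq_{\cL}y$, and hence $\tilde{\ba}(x),\tilde{\ba}(y)\leq \tilde{\ba}(w)$ by {\bf E1'}) and with $\rho^\lambda(\bD_{x^{-1}}), \rho^\lambda(\bD_{y^{-1}})\neq 0$; the inductive hypothesis applied to $x$ gives $\tilde{\ba}(x)\geq \ba_\lambda$, so $\tilde{\ba}(w)\geq \tilde{\ba}(x)\geq \ba_\lambda$. The hard part will be the case in which the contributing $x,y$ lie in the same two-sided cell as $w$, so that the inductive hypothesis does not strictly reduce; handling this will require either a finer induction (e.g.\ on an auxiliary invariant within each two-sided cell) or a separate direct argument for this case.

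Finally, (c) follows by combining (b) with the orthogonal-representation bound $\varepsilon^{\ba_\lambda}\rho^\lambda_{ij}(\bD_{w^{-1}})\in \cO$. If $\tilde{\ba}(w)\geq \ba_\lambda$, write $\varepsilon^{\tilde{\ba}(w)}\rho^\lambda_{ij}(\bD_{w^{-1}}) = \varepsilon^{\tilde{\ba}(w)-\ba_\lambda}\cdot\varepsilon^{\ba_\lambda}\rho^\lambda_{ij}(\bD_{w^{-1}})$: the first factor lies in $\cO$ (its exponent being nonnegative) and the second lies in $\cO$ by orthogonality, so the product lies in $\cO$. If $\tilde{\ba}(w)<\ba_\lambda$, then (b) yields $\rho^\lambda(\bD_{w^{-1}})=0$, and (c) is immediate.
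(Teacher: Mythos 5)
Parts (a) and (c) of your proposal are correct and essentially identical to the paper's argument. Part (b), however, has a genuine gap, both in principle and at the level of detail.

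First, the base case of your proposed induction is not trivial; in fact it goes the wrong way. Since $\bC_1=T_1$ is the identity, $w\leq_{\cLR}1$ for every $w$, so by {\bf E1'} one has $\tilde{\ba}(1)\leq\tilde{\ba}(w)$ for all $w$; that is, $\tilde{\ba}(1)=0$ is the \emph{minimum}, not the maximum, of $\tilde{\ba}$. The base case therefore requires showing that $\rho^\lambda(\bD_1)\neq 0$ implies $\ba_\lambda\leq 0$, i.e.\ $\ba_\lambda=0$; this is a substantive statement (it says $\bD_1$ kills every $E^\lambda_\varepsilon$ with $\ba_\lambda>0$), not something to dismiss as trivial. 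Moreover $\bD_1\neq T_1$ in general: one has $\bD_1 = T_1 + (\text{a }\Z[\Gamma_{>0}]\text{-combination of }T_y)$, not $\bD_1=1$. Second, as you concede yourself, the inductive step breaks down when the contributing $x,y$ lie in the same two-sided cell as $w$, and you do not supply the ``finer induction'' or the ``separate direct argument'' that would be needed there. With both the base case wrong and the inductive step admittedly incomplete, the argument for (b) does not stand.

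The paper proves (b) by a short non-inductive argument that you should compare with: pick a left cell $\fC$ with $E^\lambda$ a constituent of $[\fC]_1$; since $\rho^\lambda(\bD_{w^{-1}})\neq 0$, $\bD_{w^{-1}}$ does not act as zero on $[\fC]_A$, so there is $x\in\fC$ (hence $\tilde{\ba}(x)=\ba_\lambda$) with $\bD_{w^{-1}}\bC_x\neq 0$. Since $\tau$ is non-degenerate there is $y\in W$ with $\tau(\bD_{w^{-1}}\bC_x\bC_y)\neq 0$, and by cyclicity of the trace this equals $\tau(\bC_x\bC_y\bD_{w^{-1}})=h_{x,y,w}$. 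Hence $h_{x,y,w}\neq 0$, so $w\leq_{\cR}x$ and {\bf E1'} gives $\ba_\lambda=\tilde{\ba}(x)\leq\tilde{\ba}(w)$. This is exactly the dual of the argument you correctly used for (a): you exploit the cell module and the non-degenerate trace form rather than the quadratic identity from Lemma~\ref{abound}, and the ordering inequalities run in the opposite direction because $h_{x,y,w}\neq0$ forces $w$ \emph{below} $x$ rather than the target element below the acting element.
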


\begin{proof}
(a) Let $\fC$ be a left cell such that $E^\lambda$ occurs as a 
constituent of $[\fC]_1$. Now, if $\rho^\lambda(\bC_w)\neq 0$, then 
$\bC_w$ cannot act as zero in $[\fC]_A$. Hence, there exist
$x,y\in \fC$ such that $h_{w,x,y}\neq 0$. We have $\tilde{\ba}(x)=
\tilde{\ba}(y)=\ba_\lambda$ by {\bf E1'} and Lemma~\ref{leadcell1}. 
Since, $h_{w,x,y} \neq 0$, we have $y \leq_{\cR} w$ and so $\tilde{\ba}(w)
\leq \tilde{\ba}(y)= \ba_\lambda$ by {\bf E1'}.

(b) Again, let $\fC$ be a left cell such that $E^\lambda$ occurs as a 
constituent of $[\fC]_1$. Now, if $\rho^\lambda(\bD_{w^{-1}})\neq 0$, 
then $\bD_{w^{-1}}$ cannot act as zero in $[\fC]_A$. Hence, there exists 
some $x \in\fC$ such that $\bD_{w^{-1}}\bC_x \neq 0$. We have 
$\tilde{\ba}(x)=\ba_\lambda$ by {\bf E1'} and Lemma~\ref{leadcell1}. Now, 
since $\tau$ is non-degenerate, there exists some $y \in W$ such that 
$\tau(\bD_{w^{-1}} \bC_x \bC_y) \neq 0$. Then we also have $h_{x,y,w}=
\tau(\bC_x \bC_y \bD_{w^{-1}})=\tau(\bD_{w^{-1}}\bC_x \bC_y) \neq 0$ and 
so $w\leq_{\cR} x$. This implies $\ba_\lambda= \tilde{\ba}(x) \leq 
\tilde{\ba}(w)$ by {\bf E1'}.

(c) Since $\rho^\lambda$ is an orthogonal representation, we have
$\varepsilon^{\ba_\lambda}\rho^\lambda_{ij}(\bD_{w^{-1}}) \in \cO$
for all $i,j\in \{1,\ldots,d_\lambda\}$. Hence the assertion follows from
(b).
\end{proof}

\begin{prop} \label{cor1} Assume that {\bf E1} holds. Then $\ba(z)=
\tilde{\ba}(z)$ for all $z \in W$. Furthermore, for $x,y,z\in W$, we have
\[ \gamma_{x,y,z}=\gamma_{y,z,x}=\gamma_{z,x,y}=\sum_{\lambda \in \Lambda}
\sum_{1\leq i,j,k\leq d_\lambda} f_\lambda^{-1} \, c_{x,\lambda}^{ij}\,
c_{y,\lambda}^{jk}\, c_{z,\lambda}^{ki}.\]
\end{prop}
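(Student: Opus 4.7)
The approach is to first establish the equality $\ba(z)=\tilde{\ba}(z)$, and then read off the $\gamma$-formula by taking the constant term in the trace expansion of $h_{x,y,z}$. Since Lemma~\ref{abound} already gives $\ba(z)\geq\tilde{\ba}(z)$, the real work is the reverse inequality $\ba(z)\leq\tilde{\ba}(z)$.

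For this, I would start from the identity derived in the proof of Lemma~\ref{abound}:
\[ h_{x,y,z}=\sum_{\mu\in\Lambda}\sum_{1\leq i,j,k\leq d_\mu}\bc_\mu^{-1}\,\rho^\mu_{ij}(\bC_x)\,\rho^\mu_{jk}(\bC_y)\,\rho^\mu_{ki}(\bD_{z^{-1}}).\]
Multiplying by $\varepsilon^{\tilde{\ba}(z)}$ and using $\bc_\mu^{-1}=f_\mu^{-1}\varepsilon^{2\ba_\mu}/(1+g_\mu)$, each summand becomes a unit of $\cO$ times $\bigl(\varepsilon^{\ba_\mu}\rho^\mu_{ij}(\bC_x)\bigr)\bigl(\varepsilon^{\ba_\mu}\rho^\mu_{jk}(\bC_y)\bigr)\bigl(\varepsilon^{\tilde{\ba}(z)}\rho^\mu_{ki}(\bD_{z^{-1}})\bigr)$. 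The first two bracketed pieces lie in $\cO$ by orthogonality of $\rho^\mu$. For the third, I would note that $\tilde{\ba}(z)=\tilde{\ba}(z^{-1})$ (since $c_{z,\lambda}^{ij}\neq 0$ iff $c_{z^{-1},\lambda}^{ji}\neq 0$ by \cite[Thm.~4.4]{my02}), so Lemma~\ref{lem1}(c) applied with $w=z$ places the third piece in $\cO$ as well. Hence $\varepsilon^{\tilde{\ba}(z)}h_{x,y,z}\in\cO\cap\Z[\Gamma]=\Z[\Gamma_{\geq 0}]$, giving $\ba(z)\leq\tilde{\ba}(z)$.

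For the $\gamma$-formula I would apply the same identity with $z$ replaced by $z^{-1}$, multiply by $\varepsilon^{\ba(z)}$, and take constant terms. The prefactor $f_\mu^{-1}/(1+g_\mu)$ contributes $f_\mu^{-1}$, while two of the three matrix factors yield $c_{x,\mu}^{ij}$ and $c_{y,\mu}^{jk}$ by definition. The main obstacle is then to verify that the constant term of $\varepsilon^{\ba(z)}\rho^\mu_{ki}(\bD_z)$ equals $c_{z,\mu}^{ki}$ for \emph{every} $\mu\in\Lambda$, not only for those $\mu$ with $\ba_\mu=\ba(z)$. A trichotomy on $\ba_\mu$ versus $\ba(z)$ settles this: for $\ba_\mu=\ba(z)$ the equality is the definition; for $\ba_\mu>\ba(z)$, Lemma~\ref{lem1}(b) applied to $w=z^{-1}$ (using $\tilde{\ba}(z^{-1})=\ba(z)<\ba_\mu$) forces $\rho^\mu(\bD_z)=0$, so both quantities vanish; for $\ba_\mu<\ba(z)$, the prefactor $\varepsilon^{\ba(z)-\ba_\mu}\in\fp$ kills the constant term, while $c_{z,\mu}^{ki}\neq 0$ would force $E^\mu\leftrightsquigarrow_L z$ and hence $\ba_\mu=\tilde{\ba}(z)=\ba(z)$, a contradiction.

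Assembling the pieces gives the stated triple-product formula for $\gamma_{x,y,z}$, and the cyclic invariance $\gamma_{x,y,z}=\gamma_{y,z,x}=\gamma_{z,x,y}$ is then immediate from the substitution $(i,j,k)\mapsto(k,i,j)$ in the sum over matrix indices.
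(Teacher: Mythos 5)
Your proof is correct and essentially the same as the paper's: both hinge on applying the Schur/trace expansion of $h_{x,y,z^{\pm 1}}$ in terms of $\bc_\mu^{-1}\rho^\mu(\bC_x)\rho^\mu(\bC_y)\rho^\mu(\bD_{z^{\mp 1}})$, multiplying by $\varepsilon^{\tilde{\ba}(z)}$, and using Lemma~\ref{lem1} together with integrality in $\cO$ to get the reverse inequality and then the constant-term formula. The only cosmetic differences are that you state the case trichotomy on $\ba_\mu$ versus $\ba(z)$ explicitly, whereas the paper first restricts the sum to $\ba_\mu\leq\tilde{\ba}(z)$ via Lemma~\ref{lem1}(b) and then extends it back afterward.
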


\begin{proof} Assume that $c_{z,\lambda}^{ij} \neq 0$. Now recall that
$c_{z,\lambda}^{ij}$ is the constant term of $\varepsilon^{\ba_\lambda}(T_z)$,
$\varepsilon^{\ba_\lambda}(\bC_z)$ and $\varepsilon^{\ba_\lambda}(\bD_z)$.
Hence, we have $\rho^\lambda(\bC_x) \neq 0$ and $\rho^\lambda(\bD_z)\neq 0$.
So Lemma~\ref{lem1} yields that $\ba(z)=\ba_\lambda=\tilde{\ba}(z)$.

Let $x,y,z\in W$. As in the proof of Lemma~\ref{abound},
we find that 
\begin{align*}
\varepsilon^{\tilde{\ba}(z)} h_{x,y,z^{-1}}&
=\varepsilon^{\tilde{\ba}(z)}\tau(\bC_x\bC_y\bD_z)=
\varepsilon^{\tilde{\ba}(z)}\sum_{\lambda \in \Lambda} \bc_\lambda^{-1} 
\mbox{trace} (\bC_x\bC_y\bD_z,E^\lambda)\\
&=\sum_{\lambda \in \Lambda} \frac{f_\lambda^{-1}}{1+g_\lambda}\, 
\varepsilon^{2\ba_\lambda+ \tilde{\ba}(z)}\,\mbox{trace}\bigl(
\rho^\lambda(\bC_x\bC_y\bD_z) \bigr).
\end{align*}
Now $\rho^\lambda(\bC_x\bC_y\bD_{z})=\rho^\lambda(\bC_x)\rho^\lambda
(\bC_y)\rho^\lambda(\bD_z)$ and so  the above expression equals
\[ \sum_{\lambda \in \Lambda} \sum_{1\leq i,j,k\leq d_\lambda}
\frac{f_\lambda^{-1}}{1+g_\lambda} \bigl(\varepsilon^{\ba_\lambda}
\rho^\lambda_{ij}(\bC_x)\bigr)\, \bigl(\varepsilon^{\ba_\lambda}
\rho^\lambda_{jk}(\bC_y)\bigr)\, \bigl(\varepsilon^{\tilde{\ba}(z)}
\rho^\lambda_{ki} (\bD_z)\bigr).\]
Furthermore, by Lemma~\ref{lem1}(b), we have $\tilde{\ba}(z) \geq 
\ba_\lambda$ for all non-zero terms in the above sum. So the above sum 
can be rewritten as
\[ \sum_{\lambda \in \Lambda\,:\,\ba_\lambda\leq \tilde{\ba}(z)}
\sum_{1\leq i,j,k\leq d_\lambda} \frac{f_\lambda^{-1}\varepsilon^{\tilde{\ba}
(z) -\ba_\lambda}}{1+g_\lambda} \bigl(\varepsilon^{\ba_\lambda}
\rho^\lambda_{ij}(\bC_x) \bigr)\, \bigl(\varepsilon^{\ba_\lambda}
\rho^\lambda_{jk}(\bC_y)\bigr) \, \bigl(\varepsilon^{\ba_\lambda}
\rho^\lambda_{ki} (\bD_z)\bigr).\]
Since each $\rho^\lambda$ is an orthogonal representation, the terms
$\varepsilon^{\ba_\lambda} \rho^\lambda_{ij}(\bC_x)$,
$\varepsilon^{\ba_\lambda} \rho^\lambda_{jk}(\bC_y)$,
$\varepsilon^{\ba_\lambda} \rho^\lambda_{ki} (\bD_z)$ all
lie in $\cO$. Hence, the whole sum lies in $\cO$. First of all, this
shows that $\varepsilon^{\tilde{\ba}(z)}h_{x,y,z^{-1}} \in \cO\cap
\Z[\Gamma]=\Z[\Gamma_{\geq 0}]$ and so $\ba(z)=\ba(z^{-1})\leq 
\tilde{\ba}(z)$ (where the first equality holds by Remark~\ref{MJinvers}). 
The reverse inequality holds by Lemma~\ref{abound}. Thus, we have
shown that $\tilde{\ba}(z)=\ba(z)$. 

Now let us return to the above sum. We have already noted that each
term lies in $\cO$, hence the constant term of the whole sum above can 
be computed term by term. Thus, the contant term of $\varepsilon^{\ba(z)} 
h_{x,y,z^{-1}}$ equals
\[ \sum_{\lambda \in \Lambda\,:\,\ba_\lambda=\tilde{\ba}(z)}
\sum_{1\leq i,j,k\leq d_\lambda} f_\lambda^{-1} \, c_{x,\lambda}^{ij}\,
c_{y,\lambda}^{jk}\, c_{z,\lambda}^{ki}.\]
We note that, in fact, the sum can be extended over all $\lambda \in
\Lambda$. Indeed, if $c_{z,\lambda}^{ki}\neq 0$ for some $\lambda,k,i$,
then $\tilde{\ba}(z)=\ba_\lambda$ by the definition of $\tilde{\ba}(z)$. 
Thus, we have reached the conclusion that 
\[ \gamma_{x,y,z}=\sum_{\lambda \in \Lambda} \sum_{1\leq i,j,k
\leq d_\lambda} f_\lambda^{-1} \, c_{x,\lambda}^{ij}\, c_{y,\lambda}^{jk}\, 
c_{z,\lambda}^{ki}.\]
It remains to notice that the expression on the right hand side is
symmetrical under cyclic permutations of $x,y,z$. This immediately
yields that $\gamma_{x,y,z}=\gamma_{y,z,x}=\gamma_{z,x,y}$.
\end{proof}

\begin{lem} \label{lem4} Assume that {\bf E1} holds. Let $w \in W$. Then 
$\ba(w)\leq \Delta(w)$. Furthermore, 
\[ \sum_{\lambda \in \Lambda} \sum_{1\leq i \leq d_\lambda}
f_\lambda^{-1}\,c_{w,\lambda}^{ii}=\left\{\begin{array}{cl}
n_w & \qquad \mbox{if $w \in \cD$},\\ 0 & \qquad \mbox{otherwise}.
\end{array}\right.\]
\end{lem}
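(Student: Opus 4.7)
The plan is to compute $\tau(\bC_w)$ in two different ways and compare. First, reading off the coefficient of $T_1$ from the expansion $\bC_w=(-1)^{l(w)}T_w+\sum_{y<w}(-1)^{l(y)}\overline{p}_{y,w}T_y$ recalled after Theorem~\ref{Mklbase} and using $\tau(T_y)=\delta_{y,1}$ immediately gives $\tau(\bC_w)=\overline{p}_{1,w}$. By the definition of $\Delta(w)$ and $n_w$, the element $\overline{p}_{1,w}$ has valuation exactly $\Delta(w)$, with $\varepsilon^{-\Delta(w)}\overline{p}_{1,w}\equiv n_w\bmod \fp$.

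Second, I would expand $\tau(\bC_w)$ via the trace formula for the symmetric algebra $\bH$ together with $\bc_\lambda^{-1}=f_\lambda^{-1}\varepsilon^{2\ba_\lambda}/(1+g_\lambda)$ (where $g_\lambda\in A_{>0}$), obtaining
\[ \tau(\bC_w)=\sum_{\lambda\in\Lambda}\sum_{1\leq i\leq d_\lambda}\frac{f_\lambda^{-1}\varepsilon^{\ba_\lambda}}{1+g_\lambda}\cdot\bigl(\varepsilon^{\ba_\lambda}\rho^\lambda_{ii}(\bC_w)\bigr). \]
Since $\rho^\lambda$ is orthogonal, each factor $\varepsilon^{\ba_\lambda}\rho^\lambda_{ii}(\bC_w)$ lies in $\cO$ with constant term $c_{w,\lambda}^{ii}$, and $1/(1+g_\lambda)\in 1+\fp$. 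By Lemma~\ref{lem1}(a), which rests on \textbf{E1}, combined with the equality $\ba(w)=\tilde{\ba}(w)$ from Proposition~\ref{cor1}, every nonzero summand satisfies $\ba_\lambda\geq\ba(w)$, so the whole sum lies in $\varepsilon^{\ba(w)}\cO$.

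Comparing the two expressions for $\tau(\bC_w)$ yields $\Delta(w)\geq\ba(w)$ at once, proving the first assertion. For the second, I would compute the constant term of $\varepsilon^{-\ba(w)}\tau(\bC_w)$ term by term: only summands with $\ba_\lambda=\ba(w)$ survive, each contributing $f_\lambda^{-1}c_{w,\lambda}^{ii}$. Since $c_{w,\lambda}^{ii}\neq 0$ forces $E^\lambda\leftrightsquigarrow_L w$ and hence $\ba_\lambda=\ba(w)$, one may harmlessly extend the sum over all of $\Lambda$, obtaining $\sum_{\lambda,i}f_\lambda^{-1}c_{w,\lambda}^{ii}$. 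Meanwhile the constant term of $\varepsilon^{-\ba(w)}\overline{p}_{1,w}$ equals $n_w$ exactly when $w\in\cD$ (i.e.\ $\Delta(w)=\ba(w)$) and vanishes otherwise, which delivers the stated dichotomy. The only point requiring real care---essentially bookkeeping---is to track valuations and constant terms inside $\cO$ and to invoke Lemma~\ref{lem1}(a) at the right moment; I do not anticipate any substantive obstacle.
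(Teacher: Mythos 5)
Your proposal is correct and follows essentially the same route as the paper: both compute $\tau(\bC_w)=\overline{p}_{1,w}$ in two ways (once from the $T$-expansion of $\bC_w$, once via the symmetric-algebra trace formula), invoke Lemma~\ref{lem1}(a) together with $\ba(w)=\tilde{\ba}(w)$ from Proposition~\ref{cor1} to get $\ba_\lambda\geq\ba(w)$ on nonzero summands, conclude $\varepsilon^{-\ba(w)}\overline{p}_{1,w}\in\cO\cap\Z[\Gamma]=\Z[\Gamma_{\geq 0}]$ to prove $\ba(w)\leq\Delta(w)$, and then extract the constant term term-by-term.
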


\begin{proof} We use an argument similar to that in the proof of
\cite[Lemma~4.6]{geia06}. First note that $\tau(\bC_w)=
\overline{p}_{1,w}$. So we obtain the identity
\[ \overline{p}_{1,w}=\sum_{\lambda\in \Lambda} \bc_\lambda^{-1}\,
\mbox{trace}(\rho^\lambda(\bC_w))=\sum_{\lambda \in \Lambda}
\sum_{1\leq i\leq d_\lambda} \frac{f_\lambda^{-1}}{1+g_\lambda}
\varepsilon^{\ba_\lambda} \bigl(\varepsilon^{\ba_\lambda}
\rho_{ii}^\lambda(\bC_w)\bigr).\]
By Proposition~\ref{cor1} and Lemma~\ref{lem1}(a), we have $\ba(w)=
\tilde{\ba}(w)\leq \ba_\lambda$ for all non-zero terms in the above
sum. Thus, we obtain
\[ \varepsilon^{-\ba(w)}\overline{p}_{1,w}=
\sum_{\lambda\in\Lambda\,:\,\ba(w)\leq \ba_\lambda}
\sum_{1\leq i\leq d_\lambda} \frac{f_\lambda^{-1}}{1+g_\lambda}
\varepsilon^{\ba_\lambda-\ba(w)} \bigl(\varepsilon^{\ba_\lambda}
\rho_{ii}^\lambda(\bC_w)\bigr).\]
Since each $\rho^\lambda$ is orthogonal, each term
$\varepsilon^{\ba_\lambda} \rho^\lambda_{ii}(\bC_w)$ lies in $\cO$.
This shows, first of all, that $\varepsilon^{-{\ba}(w)}\overline{p}_{1,w}
\in \cO \cap {\Z}[\Gamma]=\Z[\Gamma_{\geq 0}]$ and so ${\ba}(w)\leq
\Delta(w)$, as required. Furthermore, the constant term of the whole sum 
can be determined term by term. Thus, we have
\[ \varepsilon^{-\ba(w)}\overline{p}_{1,w} \equiv \sum_{\lambda\in\Lambda\,:
\,\ba(w)=\ba_\lambda} \sum_{1\leq i\leq d_\lambda} f_\lambda^{-1}
c_{w,\lambda}^{ii}.\]
But then the sum can be extended over all $\lambda \in \Lambda$ because
we have $c_{w,\lambda}^{ii}=0$ unless $\ba(w)=\tilde{\ba}(w)=\ba_\lambda$.
On the other hand, we have $\varepsilon^{-\ba(w)}\overline{p}_{1,w}
\equiv n_w$ if $\ba(w)=\Delta(w)$, and $\varepsilon^{-\ba(w)}
\overline{p}_{1,w} \equiv 0$ if $\ba(w)<\Delta(w)$. 
\end{proof}

\begin{cor} \label{lem4a} Assume that {\bf E1} holds. Then {\bf P1},
{\bf P4}, {\bf P7} and {\bf P8} hold. Furthermore, for any $z \in W$, we 
have $\ba(z)=\ba_\lambda$ where $\lambda \in \Lambda$ is such that 
$E^\lambda\leftrightsquigarrow_{L} z$.
\end{cor}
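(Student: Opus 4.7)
The plan is to collect consequences of the results already proved. The final assertion is immediate: Proposition~\ref{cor1} gives $\ba(z) = \tilde{\ba}(z)$ for all $z \in W$, and $\tilde{\ba}(z) = \ba_\lambda$ for any $\lambda$ with $E^\lambda \leftrightsquigarrow_{L} z$ by the very definition of $\tilde{\ba}$ (which is legitimate thanks to {\bf E1}). Substituting $\ba = \tilde{\ba}$ back into earlier statements, {\bf P1} is exactly the inequality $\ba(w) \leq \Delta(w)$ proved in Lemma~\ref{lem4}, and {\bf P4} is exactly {\bf E1'} (observed to follow from {\bf E1} just after its statement). The cyclic symmetry {\bf P7} is built into the explicit formula
\[ \gamma_{x,y,z} = \sum_{\lambda \in \Lambda} \sum_{1 \leq i,j,k \leq d_\lambda} f_\lambda^{-1}\, c_{x,\lambda}^{ij}\, c_{y,\lambda}^{jk}\, c_{z,\lambda}^{ki} \]
supplied by Proposition~\ref{cor1}.

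For {\bf P8}, I would argue as follows. Suppose $\gamma_{x,y,z} \neq 0$. By the above formula there exist $\lambda \in \Lambda$ and indices $i,j,k \in \{1,\ldots,d_\lambda\}$ with $c_{x,\lambda}^{ij}$, $c_{y,\lambda}^{jk}$, $c_{z,\lambda}^{ki}$ all nonzero. Applying the symmetry $c_{w,\lambda}^{ij} = c_{w^{-1},\lambda}^{ji}$ from \cite[Theorem~4.4]{my02} produces matching nonzero leading coefficients for $x^{-1}$, $y^{-1}$, $z^{-1}$. I would then appeal to the refinement of Lemma~\ref{leadcell1} implicit in the orthogonal representation framework of \cite{my02}: for a fixed $\lambda$ and a fixed column index, all $w$ with some nonzero leading coefficient in that column lie in a single left cell (namely one in whose cell module $E^\lambda$ appears). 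Applying this to the pairs $(x, y^{-1})$ sharing the column index $j$, $(y, z^{-1})$ sharing $k$, and $(z, x^{-1})$ sharing $i$, I obtain $x \sim_{\cL} y^{-1}$, $y \sim_{\cL} z^{-1}$ and $z \sim_{\cL} x^{-1}$, which is {\bf P8}.

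The main obstacle is this sharper left-cell version of Lemma~\ref{leadcell1}: from Remark~\ref{hyp1a} the weaker conclusion $x \sim_{\cLR} y \sim_{\cLR} z$ is essentially free, but upgrading to $\sim_{\cL}$ requires the extra structural input that column indices of the orthogonal matrix representations pin down specific left cells, not just a two-sided cell. Once this refinement is in place, {\bf P8} falls out directly and the rest of the corollary is pure bookkeeping over what Proposition~\ref{cor1} and Lemma~\ref{lem4} have already established.
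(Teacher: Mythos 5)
Your handling of {\bf P1}, {\bf P4}, {\bf P7}, and the final assertion coincides with the paper's argument and is correct: all are direct readings of Proposition~\ref{cor1} (which gives $\ba=\tilde{\ba}$ and the explicit cyclic formula for $\gamma_{x,y,z}$), of {\bf E1'}, and of Lemma~\ref{lem4}.

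For {\bf P8}, however, you depart from the paper and there is a real gap, which you flag yourself. The ``sharper left-cell version of Lemma~\ref{leadcell1}'' you invoke---that for a fixed $\lambda$ a fixed column index of the orthogonal matrix representation pins down a single left cell---is neither stated nor proved anywhere in the cited references under the hypothesis {\bf E1} alone. The fact actually available from \cite[Prop.~4.7]{my02} relates the {\em sum over all column indices} to a multiplicity, and Remark~\ref{hyp1a} only upgrades shared leading coefficients to membership in the same two-sided cell, not left cell. A column-to-left-cell correspondence is in the spirit of the later cellular-basis constructions, but establishing it essentially requires some of the very properties (e.g.\ {\bf P9}/{\bf E3}) that one is still trying to prove at this stage. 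So as written your {\bf P8} argument does not close.

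The paper sidesteps this entirely: once {\bf P7} is known, {\bf P8} is a formal consequence of {\bf P7} together with Remark~\ref{MJinvers}, using no information about leading coefficients. Concretely: if $\gamma_{x,y,z}\neq 0$ then $h_{x,y,z^{-1}}\neq 0$, whence $z^{-1}\leq_{\cL} y$; and by the anti-involution identity $h_{x,y,z}=h_{y^{-1},x^{-1},z^{-1}}$ of Remark~\ref{MJinvers} also $z^{-1}\leq_{\cR} x$, i.e.\ $z\leq_{\cL} x^{-1}$. Applying {\bf P7} to rotate to $\gamma_{y,z,x}\neq 0$ and $\gamma_{z,x,y}\neq 0$ produces the matching reverse inequalities, and putting the six one-sided inequalities together yields $x\sim_{\cL} y^{-1}$, $y\sim_{\cL} z^{-1}$, $z\sim_{\cL} x^{-1}$; this is \cite[14.8]{Lusztig03}. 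You should replace your leading-coefficient argument for {\bf P8} with this formal deduction.
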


\begin{proof} By Proposition~\ref{cor1}, we have $\ba(z)=\tilde{\ba}(z)$
and $\gamma_{x,y,z}=\gamma_{y,z,x}$ for all $x,y,z \in W$. Hence, by
{\bf E1'} and Lemma~\ref{lem4}, we have that {\bf P1}, {\bf P4}, {\bf P7}
hold. Finally, note that {\bf P8} is a formal consequence of {\bf P7} 
and Remark~\ref{MJinvers}; see \cite[14.8]{Lusztig03}. 
\end{proof}

\begin{rem} \label{finrem} Assume that {\bf E1} holds. Then
Proposition~\ref{cor1} and Lemma~\ref{lem4} show that $\gamma_{x,y,z}$ 
and $n_w$ ($w \in \cD$) can be recovered from the knowledge of the 
leading matrix coefficients. Consequently, by Remark~\ref{cellJ}, the 
partition of $W$ into left, right and two-sided cells is completely 
determined by the leading matrix coefficients.

This leads to a new approach to contructing Lusztig's asymptotic ring $\bJ$ 
and study its representation theory; see \cite{myedin08} for further details.
\end{rem}

\section{Methods for checking {\bf P1}--{\bf P15}} \label{sec5a}

Our aim now is to formulate a set of conditions which, together with 
{\bf E1} (formulated in the previous section), imply most of the properties 
{\bf P1}--{\bf P15}. Consider the following properties:
\begin{itemize}
\item[{\bf E2.}] Let $x,y\in W$ and $\lambda,\mu \in \Lambda$ be such that
$E^\lambda \leftrightsquigarrow_{L} x$ and $E^\mu \leftrightsquigarrow_L y$.
If $x \leq_{\cLR} y$ and $\ba_\lambda= \ba_\mu$, then $x \sim_{\cLR} y$.
\item[{\bf E3.}] Let $x,y\in W$ be such that $x \leq_{\cL} y$ and
$x \sim_{\cLR} y$, then $x \sim_{\cL} y$.
\item[{\bf E4.}] Let $\fC$ be a left cell of $W$. Then the function
$\fC \rightarrow \Gamma_{\geq 0}$, $w\mapsto \Delta(w)$, reaches its minimum 
at exactly one element of $\fC$.
\end{itemize}
Note that, if {\bf E1} is assumed to hold, then {\bf E2} can be reformulated
as follows:
\begin{itemize}
\item[{\bf E2'.}] If $x,y\in W$ are such that $x \leq_{\cLR} y$ and
$\tilde{\ba}(x)=\tilde{\ba}(y)$, then $x \sim_{\cLR} y$.
\end{itemize}

\begin{rem} \label{remPE1} The relevance of the above set of conditions is
explained as follows.

Assume that, for a given group $W$ and weight function $L \colon W 
\rightarrow \Gamma$, we can compute explicitly all polynomials $p_{y,w}$ 
where $y \leq w$ in $W$ and all polynomials $\mu_{y,w}^s$ where 
$y,w \in W$ and $s \in S$ are such that $sy<y<w<sw$.

Then note that this information alone is sufficient to determine the
pre-order relations $\leq_{\cL}$, $\leq_{\cR}$, $\leq_{\cLR}$ and the
corresponding equivalence relations. Furthermore, we can construct the
representations afforded by the various left cells of $W$. Finally, the 
irreducible representations of $W$ and the invariants $\ba_\lambda$ 
for $\lambda \in \Lambda$ are explicitly known in all cases. Thus, given
the above information alone, we can verify that {\bf E1}--{\bf E4} hold. 
%
\end{rem}

\begin{rem} \label{remPE2} Assume that {\bf P1}--{\bf P15} hold for
$W$. Then {\bf E1}--{\bf E4} hold for $W$.

Indeed, by \cite[20.6, 20.7]{Lusztig03} (whose proofs involve 
{\bf P1}--{\bf P15}), we have $\ba(z)=\ba_\lambda$ if $E^\lambda 
\leftrightsquigarrow_{L} z$ (see also Lemma~\ref{leadcell1}). Hence 
{\bf P4} implies {\bf E1} and {\bf P11} implies {\bf E2}. Furthermore, 
{\bf E3} follows by a combination of {\bf P4} and {\bf P9}. Finally, 
{\bf E4} follows from {\bf P1} and {\bf P13}, where the minimum of the 
$\Delta$-function is reached at the unique element of $\cD$ contained in a
given left cell.
\end{rem}

\begin{lem} \label{lem5} Assume that {\bf P1} holds. Let ${\cD}=\{d \in W 
\mid {\ba}(d)= \Delta(d)\}$. Then
\[ \sum_{d \in {\cD}} {\gamma}_{x^{-1},y,d}\, n_d=\delta_{xy} 
\qquad \mbox{for any $x,y \in W$}.\]
\end{lem}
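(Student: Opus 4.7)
The plan is to compute the trace $\tau(\bC_{x^{-1}} \bC_y) \in A$ in two different ways and then compare the constant terms (i.e.\ the coefficients of $\varepsilon^0$).

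First, expanding $\bC_{x^{-1}} \bC_y = \sum_{z \in W} h_{x^{-1},y,z}\, \bC_z$ and applying $\tau$ (which on the Kazhdan--Lusztig basis picks out the coefficient of $T_1$, giving $\tau(\bC_z) = \bar{p}_{1,z}$), one obtains
\[ \tau(\bC_{x^{-1}} \bC_y) = \sum_{z \in W} h_{x^{-1},y,z}\, \bar{p}_{1,z}. \]
Second, expanding both $\bC_{x^{-1}}$ and $\bC_y$ in the $T$-basis and using the dual-basis relation $\tau(T_u T_v) = \delta_{u,v^{-1}}$ together with the symmetry $p_{v^{-1},x^{-1}} = p_{v,x}$ (a consequence of the anti-involution $T_w \mapsto T_{w^{-1}}$ on $\bH$ sending $C_x'$ to $C_{x^{-1}}'$), one obtains
\[ \tau(\bC_{x^{-1}} \bC_y) = \sum_{v \in W} \bar{p}_{v,x}\, \bar{p}_{v,y}. \]

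For the constant term of the second expression, since $p_{v,x} \in \Z[\Gamma_{<0}]$ for $v < x$ and $p_{x,x} = 1$, each $\bar{p}_{v,x}$ has constant coefficient $\delta_{v,x}$, giving a total constant term of $\sum_v \delta_{v,x}\delta_{v,y} = \delta_{xy}$. For the first expression, set $\bar{p}_{1,z}^{<} := \bar{p}_{1,z} - n_z \varepsilon^{\Delta(z)}$, which lies in $\Z[\Gamma_{>0}]$ with exponents strictly less than $\Delta(z)$. By \textbf{P1}, $h_{x^{-1},y,z}$ has only exponents $\geq -\ba(z)$, so the constant term of $h_{x^{-1},y,z} \cdot n_z\varepsilon^{\Delta(z)}$ is $n_z$ times the coefficient of $\varepsilon^{-\Delta(z)}$ in $h_{x^{-1},y,z}$; this vanishes unless $\Delta(z) \le \ba(z)$, i.e.\ $z \in \cD$, and when $z \in \cD$ it equals $n_z\, \gamma_{x^{-1},y,z^{-1}}$. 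Reindexing via $d = z^{-1}$ (using that $\cD$ is closed under inversion and that $n_d = n_{d^{-1}}$, $\Delta(d) = \Delta(d^{-1})$, which follow from the symmetry $p_{1,w} = p_{1,w^{-1}}$), the sum of these ``leading'' contributions equals $\sum_{d \in \cD}\gamma_{x^{-1},y,d}\, n_d$.

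The main obstacle is to show that the remaining ``subleading'' contribution, namely the constant term of $\sum_z h_{x^{-1},y,z}\, \bar{p}_{1,z}^{<}$, vanishes. Setting $X := \sum_w n_w\varepsilon^{\Delta(w)}\bD_w \in \bH$, so that $1 - X = \sum_w \bar{p}_{1,w}^{<}\, \bD_w$, this is equivalent to showing that the constant term of $\tau(\bC_{x^{-1}}\bC_y\, (1-X))$ vanishes. For individual $z$ the product $h_{x^{-1},y,z}\, \bar{p}_{1,z}^{<}$ can contribute nontrivially, so the argument must exploit a global cancellation --- plausibly via the bar-conjugate identity $\sum_z h_{x^{-1},y,z}\, p_{1,z} = \sum_v p_{v,x}\, p_{v,y}$ (obtained by applying the bar involution to the first identity, using that the structure constants $h_{x,y,z}$ are bar-invariant) combined with careful accounting using \textbf{P1}. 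Once this vanishing is established, equating the constant terms from the two expressions for $\tau(\bC_{x^{-1}}\bC_y)$ yields the required identity $\sum_{d \in \cD}\gamma_{x^{-1},y,d}\, n_d = \delta_{xy}$.
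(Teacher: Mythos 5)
Your overall approach is the same as the paper's: expand $\tau(\bC_{x^{-1}}\bC_y)=\sum_z h_{x^{-1},y,z}\,\bar{p}_{1,z}$, compute its constant term, and compare with $\delta_{xy}$. But you go astray at the final step because of a sign slip. You write that $\bar{p}_{1,z}^{<}:=\bar{p}_{1,z}-n_z\varepsilon^{\Delta(z)}$ has ``exponents strictly less than $\Delta(z)$.'' That is backwards. Since $\varepsilon^{\Delta(z)}p_{1,z}\equiv n_z \bmod \Z[\Gamma_{<0}]$, the top exponent of $p_{1,z}$ is $-\Delta(z)$ (with coefficient $n_z$), so after the bar involution $\bar{p}_{1,z}^{<}$ has exponents strictly \emph{greater} than $\Delta(z)$.

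Once this is corrected, the ``subleading'' term is no obstacle at all. By the definition of $\ba$, the structure constant $h_{x^{-1},y,z}$ only involves exponents $\geq -\ba(z)$; combined with exponents $>\Delta(z)$ from $\bar{p}_{1,z}^{<}$, the product $h_{x^{-1},y,z}\bar{p}_{1,z}^{<}$ only involves exponents $>\Delta(z)-\ba(z)$, and by \textbf{P1} this is $\geq 0$. Hence each term has zero constant term individually --- there is no global cancellation to establish, and the whole paragraph about the element $X$ and bar-conjugate identities can be deleted. In fact this is precisely where \textbf{P1} earns its keep in the paper's proof: it guarantees that $\varepsilon^{\Delta(z)-\ba(z)}$, $\varepsilon^{\ba(z)}h_{x^{-1},y,z}$ and $\varepsilon^{-\Delta(z)}\bar{p}_{1,z}$ all lie in $\Z[\Gamma_{\geq 0}]$, so the constant term of the sum can be read off summand by summand. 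With the sign fixed and the extraneous material removed, your argument coincides with the paper's.
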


\begin{proof} As in the proof of \cite[14.5]{Lusztig03}, we compute the 
constant term of $\tau(\bC_{x^{-1}}\bC_{y})$ in two ways. On the one hand, 
we have $\tau(\bC_{x^{-1}}\bC_{y})\in \delta_{xy} +\Z[\Gamma_{>0}]$; hence 
$\tau(\bC_{x^{-1}}\bC_{y})$ has constant term $\delta_{xy}$. On the other
hand, we have
\begin{align*}
\tau(\bC_{x^{-1}}\bC_{y})&=\sum_{z \in W} h_{x^{-1},y,z}\tau(\bC_z)=
\sum_{z \in W} h_{x^{-1},y,z}\, \overline{p}_{1,z}\\
&=\sum_{z \in W} \varepsilon^{\Delta(z)-{\ba}(z)}\,
\bigl(\varepsilon^{{\ba}(z)} h_{x^{-1},y,z}\bigr) \,
\bigl(\varepsilon^{-\Delta(z)}\overline{p}_{1,z} \bigr).
\end{align*}
Now, by the definition of $\Delta(z)$, the term $\varepsilon^{-\Delta(z)}
\overline{p}_{1,z}$ lies in $\Z[\Gamma_{\geq 0}]$ and has constant term
$n_z$.  The term $\varepsilon^{{\ba}(z)} h_{x^{-1},y,z}$ also lies in
$\Z[\Gamma_{\geq 0}]$ and has constant term ${\gamma}_{x^{-1},y, z^{-1}}$.
Finally, by {\bf P1}, we have ${\ba}(z)\leq \Delta(z)$. Hence, the constant 
term of the whole sum can be computed term by term and we obtain
\[ \delta_{xy}=\sum_{z \in W\,:\,{\ba}(z)=\Delta(z)}
{\gamma}_{x^{-1},y,z^{-1}}\, n_z.\]
Now, by \cite[5.6]{Lusztig03}, we have $p_{1,z}=p_{1,z^{-1}}$ and so
$n_z=n_{z^{-1}}$, $\Delta(z)=\Delta(z^{-1})$. Since we also have ${\ba}(z)
=\ba(z^{-1})$ by Remark~\ref{MJinvers}, we can rewrite the above
expression as
\[ \delta_{xy}=\sum_{z \in W\,:\,{\ba}(z)=\Delta(z)}
{\gamma}_{x^{-1},y,z}\,n_z=\sum_{d \in \cD} {\gamma}_{x^{-1},y,d}\,n_d,\]
as desired. 
\end{proof}

\begin{prop} \label{mainprop} Assume that {\bf E1}--{\bf E4} hold for $W$
and all parabolic subgroups of $W$. Then {\bf P1}--{\bf P14} hold for $W$.
\end{prop}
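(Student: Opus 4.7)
The plan is to assemble {\bf P1}--{\bf P14} step by step out of {\bf E1}--{\bf E4}, using the machinery already in place. Corollary~\ref{lem4a} extracts {\bf P1}, {\bf P4}, {\bf P7}, {\bf P8} directly from {\bf E1}; Remark~\ref{hyp1a} extracts {\bf P14}; and Proposition~\ref{cor1} supplies the identity $\ba = \tilde{\ba}$, so that the reformulation {\bf E2'} of {\bf E2} becomes exactly the statement of {\bf P11}. Thus we are reduced to establishing {\bf P2}, {\bf P3}, {\bf P5}, {\bf P6}, {\bf P9}, {\bf P10}, {\bf P12}, and {\bf P13}.

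For {\bf P9}: if $z' \leq_{\cL} z$ and $\ba(z')=\ba(z)$, then $z'\leq_{\cLR}z$ with the same $\ba$-value, so {\bf P11} yields $z'\sim_{\cLR}z$ and {\bf E3} then gives $z'\sim_{\cL}z$. Property {\bf P10} follows by applying {\bf P9} to inverses, using $\ba(z)=\ba(z^{-1})$ from Remark~\ref{MJinvers} and the compatibility of $\leq_{\cR}$ and $\sim_{\cR}$ with inversion. For {\bf P13} and {\bf P5} jointly: by Lemma~\ref{lem5}, $\sum_{d\in\cD}\gamma_{y^{-1},y,d}\,n_d=1$, so at least one $d\in\cD$ has $\gamma_{y^{-1},y,d}\neq 0$, and {\bf P8} forces any such $d$ to lie in the left cell $\fC$ of $y$. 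Since $\ba$ is constant on $\fC$ by {\bf E1'} and {\bf P1} gives $\Delta\geq\ba$ with equality precisely on $\cD$, the minimum of $\Delta$ on $\fC$ is attained exactly on $\cD\cap\fC$, which {\bf E4} forces to be a singleton; this is {\bf P13}. The identity then collapses to $\gamma_{y^{-1},y,d}\,n_d=1$, forcing $n_d=\pm 1$ and $\gamma_{y^{-1},y,d}=n_d$, which is {\bf P5}. For {\bf P6}: taking $y=d$ gives $\gamma_{d^{-1},d,d}\neq 0$, so by the cyclic symmetry {\bf P7} also $\gamma_{d,d,d^{-1}}\neq 0$, and {\bf P8} yields $d^{-1}\sim_{\cL}d$; since moreover $d^{-1}\in\cD$ (from $\ba(d^{-1})=\ba(d)$ and $p_{1,d}=p_{1,d^{-1}}$), the uniqueness in {\bf P13} forces $d^{-1}=d$. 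For {\bf P2}: if $\gamma_{x,y,d}\neq 0$ with $d\in\cD$ but $x\neq y^{-1}$, Lemma~\ref{lem5} gives $\sum_{d'\in\cD}\gamma_{x,y,d'}\,n_{d'}=0$, while {\bf P8} together with {\bf P13} allows at most one nonzero term in this sum, forcing $\gamma_{x,y,d}=0$, a contradiction. Property {\bf P3} is then immediate from {\bf P13} and {\bf P8}.

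The main obstacle is {\bf P12}, which is the only place where the hypothesis that {\bf E1}--{\bf E4} hold for proper parabolic subgroups is used. For $y\in W_I$, applying Corollary~\ref{lem4a} inside both $W$ and $W_I$ gives $\ba(y)=\ba_\lambda$ and $\ba_I(y)=\ba^I_{\lambda'}$, where $E^\lambda$ and $E^{\lambda'}$ are irreducible constituents of the left-cell modules of $y$ in $W$ and $W_I$, respectively. To conclude $\ba_\lambda=\ba^I_{\lambda'}$ I would invoke the standard compatibility of $\ba$-invariants under induction of cell representations: the left cell $\fC'$ of $y$ in $W_I$ induces to a module related to the left cell $\fC$ of $y$ in $W$, and any constituent of $\Ind_{W_I}^{W}E^{\lambda'}$ that is also a constituent of $[\fC]_1$ has $\ba$-invariant equal to $\ba^I_{\lambda'}$. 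This parabolic comparison is the only step requiring the inductive hypothesis on $W_I$; everything else uses only {\bf E1}--{\bf E4} for $W$ itself.
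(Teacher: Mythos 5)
Your proof follows essentially the same structure as the paper's, and most of your variations are correct and clean: extracting \textbf{P14} directly from Remark~\ref{hyp1a}; observing that once Proposition~\ref{cor1} gives $\ba = \tilde{\ba}$, condition \textbf{E2'} is literally \textbf{P11}; establishing \textbf{P13} first and then deriving \textbf{P2}, \textbf{P3}, \textbf{P5}, \textbf{P6} directly from \textbf{P13} plus Lemma~\ref{lem5} and \textbf{P7}--\textbf{P8}, instead of citing Lusztig's formal reductions in \cite[14.5, 14.6]{Lusztig03}. All of that checks out (in particular your joint treatment of \textbf{P13}/\textbf{P5} correctly notes that $\cD \cap \fC \neq \emptyset$ must be established first, via Lemma~\ref{lem5} and \textbf{P8}, before invoking \textbf{E4}).

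The one genuine gap is \textbf{P12}. You propose to deduce $\ba_\lambda = \ba^I_{\lambda'}$ from a ``standard compatibility of $\ba$-invariants under induction of cell representations,'' asserting that any constituent of $\Ind_{W_I}^{W}E^{\lambda'}$ that also appears in $[\fC]_1$ has $\ba$-invariant $\ba^I_{\lambda'}$. This is not a citable standard fact. What is standard is that induction gives $\ba_\lambda \geq \ba^I_{\lambda'}$ (and only for carefully chosen constituents does one get equality, as in $j$-induction of special representations); the equality you need across all relevant constituents is essentially \emph{equivalent} to \textbf{P12} and would need the very argument you are trying to avoid. The paper's route is different and fills this gap cleanly: since \textbf{E1}--\textbf{E4} are assumed for $W_I$ as well, \textbf{P1}--\textbf{P11} already hold for both $W$ and $W_I$, and then \textbf{P12} follows by Lusztig's purely formal deduction from \textbf{P3}, \textbf{P4}, \textbf{P8} for both groups, see \cite[14.12]{Lusztig03}. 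You should replace your \textbf{P12} paragraph with that argument; the rest of your proof stands.
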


\begin{proof} By Corollary~\ref{lem4a}, we already know that {\bf P1}, 
{\bf P4}, {\bf P7}, {\bf P8} hold. Now let us consider the remaining 
properties.

\smallskip
{\bf P2} Let $x,y \in W$ and assume that $\gamma_{x^{-1},y,d} \neq 0$ for
some $d \in \cD$. First we show that $d$ is uniquely determined by this
condition. Indeed, let $\fC$ be the left cell containing~$x$. By {\bf P8}, 
we have $d \sim_{\cL} x$, i.e., $d \in \fC$. By {\bf P1}, {\bf P4}, 
we have $\Delta(d)=\ba(d)=\ba(w)\leq \Delta(w)$ for all $w \in \fC$. Thus,
the $\Delta$-function, restricted to $\fC$, reaches its minimum at $d$.
Now {\bf E4} shows that $d$ is uniquely determined, as claimed. 

Consequently, the sum in Lemma~\ref{lem5} reduces to one term and we have 
$\gamma_{x^{-1},y,d}n_d=\delta_{xy}$. Since the left hand side is assumed 
to be non-zero, we deduce that $x=y$. 

\smallskip
{\bf P3} Let $y \in W$. By Lemma~\ref{lem5}, there exists some $d \in \cD$ 
such that $\gamma_{y^{-1},y,d}\neq 0$. Arguing as in the proof of {\bf P2},
we see that $d$ is uniquely determined.

\smallskip
{\bf P5} is a formal consequence of {\bf P1}, {\bf P3}; see 
\cite[14.5]{Lusztig03}.

\smallskip
{\bf P6} is a formal consequence of {\bf P2}, {\bf P3}; see 
\cite[14.6]{Lusztig03}.

\smallskip
{\bf P9} Let $x,y\in W$ be such that $x\leq_{\cL} y$ and $\ba(x)=\ba(y)$.
In particular, we have $x\leq_{\cLR} y$ and, by {\bf E1} and 
Proposition~\ref{cor1}, we have $\tilde{\ba}(x)=\tilde{\ba}(y)$. So 
{\bf E2'} implies that $x \sim_{\cLR} y$. Finally, {\bf E3} yields 
$x\sim_{\cL} y$, as required.

\smallskip
{\bf P10} is a formal consequence of {\bf P9}; see \cite[14.10]{Lusztig03}.

\smallskip
{\bf P11} is a formal consequence of {\bf P4}, {\bf P9}, {\bf P10}; 
see \cite[14.11]{Lusztig03}.

\smallskip
{\bf P12} Since {\bf E1}--{\bf E4} are assumed to hold for $W$ and for $W_I$,
we already know that {\bf P1}--{\bf P11} hold for $W$ and $W_I$. Now {\bf P12}
is a formal consequence of {\bf P3}, {\bf P4}, {\bf P8} for $W$ and $W_I$; 
see \cite[14.12]{Lusztig03}.

\smallskip
{\bf P13} Let $\fC$ be a left cell. First we show that $\fC$ contains at most
one element from $\cD$. Let $d \in \fC\cap \cD$. By {\bf P1}, {\bf P4}, we 
have $\Delta(d)=\ba(d)=\ba(w)\leq \Delta(w)$ for all $w \in \fC$. Thus,
the $\Delta$-function (restricted to $\fC$) reaches its minimum at~$d$. So
{\bf E4} shows that $d$ is uniquely determined, as claimed.

Now let $x \in \fC$. By Lemma~\ref{lem5}, there exists some $d \in \cD$ 
such that $\gamma_{x^{-1},x,d}\neq 0$. By {\bf P8}, we have $d \in 
\fC$ and so $d \in \fC\cap \cD$. By the previus argument, $\fC\cap \cD
=\{d\}$. 

\smallskip
{\bf P14} is a formal consequence of {\bf P6}, {\bf P13}; 
see \cite[14.14]{Lusztig03}.
\end{proof}

Finally, we discuss the remaining property in Lusztig's list which 
is not covered by the above arguments: property {\bf P15}. 

\begin{rem} \label{p15equal} Assume that we are in the equal parameter
case. Then, by \cite[14.15 and 15.7]{Lusztig03}, {\bf P15} can be deduced
once {\bf P4}, {\bf P9} and {\bf P10} are known to hold. Hence, in this
case, all of {\bf P1}--{\bf P15} are a consequence of {\bf E1}--{\bf E4}.
\end{rem}

The following two results will be useful in dealing with {\bf P15} in
the case of unequal parameters.

\begin{rem} \label{proofp15} Following \cite[14.15]{Lusztig03}, we
can reformulate {\bf P15} as follows. Let $\breve{\Gamma}$ be an isomorphic 
copy of $\Gamma$; then $L$ induces a weight function $\breve{L}\colon 
W \rightarrow \breve{\Gamma}$. Let $\breve{\bH}=\bH_{\breve{A}}(W,S,
\breve{L})$ be the corresponding Iwahori--Hecke algebra over 
$\breve{A}=R[\breve{\Gamma}]$, with parameters $\{\breve{v}_s \mid 
s \in S\}$. We have a corresponding Kazhdan--Lusztig basis
$\{\breve{\bC}_w \mid w \in W\}$. We shall regard $A$ and $\breve{A}$
as subrings of $\cA=R[\Gamma\oplus \breve{\Gamma}]$. By extension
of scalars, we obtain $\cA$-algebras $\bH_{\cA}=\cA \otimes_A \bH$
and $\breve{\bH}_{\cA}=\cA \otimes_{\breve{A}} \breve{\bH}$. Let
$\cE$ be the free $\cA$-module with basis $\{e_w \mid w \in W\}$.
We have an obvious left $\bH_{\cA}$-module structure and an obvious
right $\breve{\bH}_{\cA}$-module structure on $\cE$ (induced by
left and right multiplication). Now consider the following condition, 
where $s,t \in S$ and $w \in W$:
\begin{equation*}
(\bC_s.e_w).\breve{\bC}_t-\bC_s.( e_w.\breve{\bC}_t)= \mbox{combination of 
$e_y$ where $y \leq_{\cLR} w$, $y \not\sim_{\cLR} w$}.\tag{$*$}
\end{equation*}
As remarked in \cite[14.15]{Lusztig03}, ($*$) is already known to hold
if $sw<w$ or $wt<w$. Hence, it is sufficient to consider ($*$) for the 
cases where both $sw>w$ and $wt>w$.

The discussion in \cite[14.15]{Lusztig03} shows that {\bf P15} is equivalent
to ($*$), provided that {\bf P4}, {\bf P11} are already known to hold. 
\end{rem}

By looking at the proof of Theorems~\ref{MJasymp}, one notices that it 
only requires a property which looks weaker than {\bf P15}; we called this 
property {\bf P15'} in \cite[\S 5]{mylaus}. The following result shows 
that, in fact, {\bf P15} is equivalent to {\bf P15'}.

\begin{lem} \label{Mp15prime} Assume that {\rm {\bf P1}, {\bf P4}, {\bf P7},
{\bf P8}} hold.  Then {\bf P15} is equivalent to the following property
\begin{itemize}
\item[\bf P15$^\prime$.] If $x,x',y,w\in W$ satisfy $\ba(w)=\ba(y)$, then
\[\sum_{u \in W} \gamma_{w,x',u^{-1}}\, h_{x,u,y} =\sum_{u\in W}
h_{x,w,u}\, \gamma_{u,x',y^{-1}}.\]
\end{itemize}
Note that, on both sides, the sum needs only be extended over all $u \in W$
such that $\ba(u)=\ba(w)=\ba(y)$ (thanks to {\bf P4}).
\end{lem}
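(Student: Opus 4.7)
The plan is to prove the two implications separately: P15 $\Rightarrow$ P15$'$ is a direct leading-coefficient projection, while P15$'$ $\Rightarrow$ P15 is the technical heart.

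For the forward direction, first I observe that in P15 the effective sums are over $y' \in W$ with $\ba(y') = \ba(w) = \ba(y) =: N_0$. Indeed, $h_{w,x',y'} \neq 0$ forces $y' \leq_{\cL} x'$ and $y' \leq_{\cR} w$ (read off from $\bC_w\bC_{x'} = \sum_{y'} h_{w,x',y'}\bC_{y'}$), whence $\ba(y') \geq \ba(w)$ by {\bf P4}; likewise $h_{x,y',y} \neq 0$ gives $\ba(y) \geq \ba(y')$, and analogously on the RHS. For such $y'$, the definition of $\ba$ shows that $\varepsilon^{N_0}h_{w,x',y'} \in \Z[\Gamma_{\geq 0}]$ with constant term $\gamma_{w,x',(y')^{-1}}$, and $\varepsilon^{N_0}h_{y',x',y}$ has constant term $\gamma_{y',x',y^{-1}}$. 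Multiplying the P15 identity by $\varepsilon^{N_0}\otimes 1$ and applying the $\Z$-linear map $\alpha\colon \Z[\Gamma_{\geq 0}]\otimes\Z[\Gamma] \to \Z[\Gamma]$ defined by $a\otimes b \mapsto (\text{constant term of }a)\cdot b$, both sides collapse to the two sides of P15$'$ (renaming $y'$ to $u$; the sum is automatically restricted to $\ba(u) = N_0$ thanks to {\bf P8}).

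For the converse, I again restrict to $y'$ with $\ba(y') = N_0$ and expand each structure constant as leading term plus a higher-order remainder:
\[ \varepsilon^{N_0}h_{w,x',y'} = \gamma_{w,x',(y')^{-1}} + r_{y'}, \qquad \varepsilon^{N_0}h_{x,y',y} = \gamma_{x,y',y^{-1}} + s_{y'}, \]
with $r_{y'}, s_{y'} \in \Z[\Gamma_{>0}]$, and analogously for the two structure constants on the RHS. Substituting into P15, after scaling by $\varepsilon^{N_0}\otimes\varepsilon^{N_0}$ so that both tensor factors live in $\Z[\Gamma_{\geq 0}]$, the pure leading-by-leading part agrees on the two sides by applying P15$'$ and taking constant term in the second factor. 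The three remaining cross and remainder-remainder contributions must be matched as well; these can be handled by reapplying P15$'$ to inputs obtained from $(x,x',y,w)$ by permuting and inverting, exploiting the cyclic symmetry $\gamma_{x,y,z}=\gamma_{y,z,x}$ ({\bf P7}), the inversion symmetries $\gamma_{x,y,z}=\gamma_{y^{-1},x^{-1},z^{-1}}$ and $h_{x,y,z}=h_{y^{-1},x^{-1},z^{-1}}$ of Remark~\ref{MJinvers}, and the cell-support restrictions from {\bf P4} and {\bf P8} that pin down which tuples carry nonzero contributions.

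The hard part will be this converse implication. P15 is a tensor identity in $\Z[\Gamma]\otimes_\Z\Z[\Gamma]$ carrying two independent degrees of freedom, whereas P15$'$ lives in $\Z[\Gamma]$ and records only the leading slice in the first tensor factor. The crux is to show that, given {\bf P1}, {\bf P4}, {\bf P7}, {\bf P8}, the higher-order slices of P15 are forced by its leading slice once the cyclic and inversion symmetries of $\gamma$ and $h$ are systematically invoked.
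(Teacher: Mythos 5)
The forward implication ({\bf P15} $\Rightarrow$ {\bf P15$'$}) in your sketch is essentially correct and matches the paper, which simply cites Lusztig \cite[18.9(b)]{Lusztig03} for it (minor quibble: the restriction to $\ba(u)=N_0$ comes from {\bf P4}, not {\bf P8}). The problem is the converse, which is the substance of the lemma and where your outline has a genuine gap.

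Your plan is to decompose each $\varepsilon^{N_0}h_{\bullet,\bullet,\bullet}$ as its constant term (a $\gamma$) plus a remainder in $\Z[\Gamma_{>0}]$, match the leading-by-leading terms via {\bf P15$'$}, and then ``handle the three remaining cross and remainder--remainder contributions by reapplying {\bf P15$'$} with permuted/inverted inputs.'' This last step cannot work as stated. {\bf P15$'$} (even after all cyclic permutations from {\bf P7} and inversions from Remark~\ref{MJinvers}) is still an identity that only involves leading coefficients $\gamma$ and {\em full} structure constants $h$; it contains no information whatsoever about the remainders $r_{y'},s_{y'}$ individually. A cross term such as $\sum_{y'}\gamma_{w,x',(y')^{-1}}\otimes s_{y'}$ depends on the positive-degree parts of the $h$'s in a way that no finite collection of {\bf P15$'$}-instances constrains. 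Indeed you acknowledge this yourself when you write that ``the crux is to show the higher-order slices are forced by the leading slice'' --- but that is precisely what you would need to prove, and you offer no mechanism for it.

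The mechanism that actually makes the converse work is missing from your proposal entirely: it is the ``resolution of the identity'' coming from the elements of $\cD$. Concretely, one first proves (using {\bf P1}) the relation $\sum_{d\in\cD}\gamma_{x^{-1},y,d}\,n_d=\delta_{xy}$ (Lemma~\ref{lem5}), and combines it with {\bf P15$'$}, {\bf P4}, {\bf P7}, {\bf P8} to derive Lusztig's identity
\[
h_{x,y',y}=\sum_{\atop{d\in\cD,\,z\in W}{\ba(d)=\ba(z)}}h_{x,d,z}\,n_d\,\gamma_{z,y',y^{-1}}\qquad(\ba(y)=\ba(y')),
\]
which rewrites a {\em full} structure constant as an integer linear combination (the integers being $n_d\gamma_{z,y',y^{-1}}$) of {\em full} structure constants $h_{x,d,z}$. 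Substituting this exactly (not approximately, and with no remainder terms) into both tensor factors of {\bf P15}, and using {\bf P15$'$} together with the inversion symmetry in the middle, transforms the left side into the right side. In other words, the paper never needs to control higher-order slices separately; it replaces the whole structure constant by an exact expansion governed by leading coefficients. Without this identity and the underlying Lemma~\ref{lem5}, the proof does not close.

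So: the forward direction is fine, but the converse needs a different idea than the one you propose. You should locate Lemma~\ref{lem5} (or prove the analogous $\cD$-orthogonality relation from {\bf P1}) and then establish the displayed identity above as the crucial intermediate step.
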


\begin{proof} First note that {\bf P15$^\prime$} appears in
\cite[18.9(b)]{Lusztig03}, where it is deduced from {\bf P4}, {\bf P15}. Now
we have to show that, conversely, {\bf P1}, {\bf P4}, {\bf P7}, {\bf P8} and 
{\bf P15$^\prime$} imply {\bf P15}. First we claim that {\bf P15$^\prime$} 
implies the following statement (which appears in \cite[18.10]{Lusztig03}):

\smallskip
{\em If $x,y,y'\in W$ are such that $\ba(y)=\ba(y')$, then}
\begin{equation*}
h_{x,y',y}=\sum_{\atop{d \in \cD,z \in W}{\ba(d)=\ba(z)}}
h_{x,d,z}\,n_d\, \gamma_{z,y',y^{-1}}. \tag{$*$}
\end{equation*}
To see this, note that on the right hand side, we may replace
the condition $\ba(d)=\ba(z)$ by the condition $\ba(d)=\ba(y')$; see
{\bf P4}, {\bf P8}. Using also {\bf P15$^\prime$} (where $w=d\in \cD$ 
and $x'$ is replaced by $y'$), we see that the right hand side of
($*$) equals
\[ \sum_{d\in \cD\,:\, \ba(d)=\ba(y')} n_d \Bigl(\sum_{z \in W} h_{x,d,z}\,
\gamma_{z,y',y^{-1}}\Bigr)= \sum_{d\in \cD\,:\, \ba(d)=\ba(y')} n_d 
\Bigl(\sum_{z \in W} \gamma_{d,y',z^{-1}}\, h_{x,z,y}\Bigr).\]
Now $\gamma_{d,y',z^{-1}}=0$ unless $\ba(d)=\ba(y')$; see {\bf P8}, 
{\bf P4}. Using also {\bf P7} and Lemma~\ref{lem5}, the right hand side of 
the above equation can be rewritten as
\[ \sum_{z \in W} h_{x,z,y} \Bigl(\sum_{d \in \cD} \gamma_{y',z^{-1},d}\,
n_d\Bigr)=\sum_{z\in W} h_{x,z,y}\, \delta_{zy'}=h_{x,y',y}.\]
Thus, ($*$) is proved.

Now consider the left hand side in {\bf P15} where 
$x,x,'y,w \in W$ are such that $a:=\ba(w)= \ba(y)$. If $h_{w,x',y'} \neq 0$ 
then $y' \leq_{\cR} w$ and so $a=\ba(w)\leq \ba(y')$ by {\bf P4}; similarly, 
if $h_{x,y',y} \neq 0$, then $y \leq_{\cL} y'$ and so $\ba(y') \leq
\ba(y)=a$. Hence, $\ba(y')=a$, and so we may assume that the sum only runs
over all $y'\in W$ such that $\ba(y')=a$. Inserting now ($*$) into
the left hand side of {\bf P15}, we obtain the expression
\[\sum_{\atop{y'\in W}{\ba(y')=a}}\sum_{\atop{d\in\cD,z\in W}{\ba(d)=
\ba(z)}} \gamma_{z,y',y^{-1}}\,h_{w,x',y'}\otimes h_{x,d,z}\, n_d
=\sum_{\atop{d \in \cD,z \in W}{\ba(d)=\ba(z)}} \Bigl(
\sum_{\atop{y'\in W}{\ba(y')=a}} \gamma_{z,y',y^{-1}}\, h_{w,x',y'}
\Bigr) \otimes h_{x,d,z}\, n_d.\]
Now, using Remark~\ref{MJinvers}
and {\bf P15$^\prime$}, we can rewrite the interior sum as follows:
\begin{multline*}
\sum_{\atop{y'\in W}{\ba(y')=a}} \gamma_{z,y',y^{-1}}\, h_{w,x',y'}=
\sum_{\atop{y'\in W}{\ba(y')=a}} \gamma_{y'^{-1},z^{-1},y}\,
h_{x'^{-1},w^{-1},y'^{-1}}\\
=\sum_{\atop{u\in W}{\ba(u)=a}} h_{x'^{-1},w^{-1},u}\, \gamma_{u,z^{-1},y}
=\sum_{\atop{u\in W}{\ba(u)=a}} \gamma_{w^{-1},z^{-1},u^{-1}}\,
h_{x'^{-1},u,y^{-1}}\\ =\sum_{\atop{u \in W}{\ba(u)=a}} \gamma_{z,w,u}\,
h_{u^{-1},x',y} =\sum_{\atop{u\in W}{\ba(u)=a}}\gamma_{z,w,u^{-1}}\,
h_{u,x',y}.
\end{multline*}
Inserting this back into the above expression, we find that
\[ \sum_{\atop{d \in \cD,z \in W}{\ba(d)=\ba(z)}} 
\Bigl(\sum_{\atop{y'\in W}{\ba(y')=a}} \gamma_{z,y',y^{-1}}\, h_{w,x',y'}
\Bigr) \otimes h_{x,d,z}\,n_d=\sum_{\atop{d \in \cD,z \in W}{\ba(d)=
\ba(z)}} \Bigl(\sum_{\atop{u\in W}{\ba(u)=a}} \gamma_{z,w,u^{-1}}\, 
h_{u,x',y}\Bigr) \otimes h_{x,d,z}\,n_d.  \]
Using also ($*$), we obtain the expression
\[ \sum_{\atop{u\in W}{\ba(u)=a}} h_{u,x',y}\otimes \Bigl(\sum_{\atop{d 
\in \cD,z \in W}{\ba(d)=\ba(z)}}\gamma_{z,w,u^{-1}}\, h_{x,d,z}\,n_d\Bigr)
= \sum_{\atop{u \in W}{\ba(u)=a}} h_{u,x',y}\otimes h_{x,w,u},\]
which is the right hand side of {\bf P15}. Note that, in the right hand
side of {\bf P15}, the sum need only be extended over all $y'\in W$ such
that $\ba(y')=a$. (The argument is similar to the one we used to prove the
analogous statement for the left hand side.)
\end{proof}

\begin{exmp} \label{h34} Assume that $(W,S)$ is of type $H_4$. Then we are 
in the equal parameter case. So, in order to verify {\bf P1}--{\bf P15}, 
it is sufficient to verify {\bf E1}--{\bf E4}; see Remark~\ref{p15equal}. 
Now Alvis \cite{Alvis87} has computed all polynomials $p_{y,w}$ where 
$y\leq w$ in $W$. Since we are in the equal parameter case, this also 
determines all polynomials $\mu_{y,w}^s$ where $y,w \in W$ and $s \in S$ 
are such that $sy<y<w<sw$; see \cite[6.5]{Lusztig03}. In this way, Alvis
explicitly determined the relations $\leq_{\cL}$ and $\leq_{\cLR}$; he also
found the decomposition of the left cell representations into irreducibles.

It turns out that the partial order induced on the set of two-sided cells
is a total order. (I thank Alvis for having verified this using the data 
in \cite{Alvis87}.) With the notation in [{\em loc.\ cit.}], this total
order is given by:
\begin{align*}
 G^* \leq_{\cLR} F^* \leq_{\cLR} E^* &\leq_{\cLR} D^* \leq_{\cLR} C^* 
\leq_{\cLR} B^* \leq_{\cLR} A^*\\ &=A \leq_{\cLR} B \leq_{\cLR} C 
\leq_{\cLR} D \leq_{\cLR}E\leq_{\cLR} F \leq_{\cLR} G.\end{align*}
Comparing with the information on the invariants  $\ba_\lambda$ provided
by Alvis--Lusztig \cite{AlLu82}, we see that {\bf E1} and {\bf E2} hold.
Furthermore, {\bf E3} is already explicitly stated in 
\cite[Cor.~3.3]{Alvis87}.  Finally, {\bf E4} is readily checked using
Alvis' computation of the left cells and the polynomials $p_{y,w}$.

In this way, we obtain an alternative proof of {\bf P1}--{\bf P15} for
$H_4$, which does not rely on DuCloux's computation \cite{Fokko} of all 
structure constants $h_{x,y,z}$ ($x,y,z\in W$). 

Similar arguments can of course also be applied to $(W,S)$ of type $H_3$.
\end{exmp}

\section{Lusztig's homomorphism} \label{sec6}

We now use the methods developped in the previous section to verify
{\bf P1}--{\bf P15} for type $F_4$ and $I_2(m)$. Then we are in a position
to extend the construction of Lusztig's isomorphism to the general 
case of unequal parameters.

\begin{prop} \label{p115i2} Let $3 \leq m<\infty$ and $(W,S)$ be of type 
$I_2(m)$, with generators $s_1,s_2$ such that $(s_1s_2)^m=1$. Then 
{\bf P1}--{\bf P15} hold for any weight function $L\colon W \rightarrow
\Gamma$ and any monomial order $\leq$ such that $L(s_i)>0$ for $i=1,2$.
\end{prop}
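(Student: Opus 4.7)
The plan is to invoke Proposition~\ref{mainprop} to obtain \textbf{P1}--\textbf{P14}, and then handle \textbf{P15} separately. The proper parabolic subgroups of $W$ all have rank at most~$1$, where \textbf{E1}--\textbf{E4} are immediate, so the task reduces to verifying \textbf{E1}--\textbf{E4} for $W$ itself (for every choice of parameters and monomial order), and then to checking \textbf{P15}.

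First, I would compute the Kazhdan--Lusztig basis of $\bH$ explicitly. For the dihedral group this is tractable: a straightforward induction on $l(w)$ based on the formula for $\bC_{s_i}\bC_w$ recalled in Section~\ref{sec1} determines all $p_{y,w}$ and all coefficients $\mu^{s_i}_{y,w}$ as explicit expressions in $a:=L(s_1)$ and $b:=L(s_2)$. The shape of the answer splits into the cases (i) $m$ odd (which forces $a=b$), (ii) $m$ even with $a=b$, and (iii) $m$ even with $a\neq b$ (where the monomial order only enters through the sign of $a-b$). In each case one reads off the preorders $\leq_{\cL}$, $\leq_{\cR}$, $\leq_{\cLR}$, the cell partitions, and the constituents of each module $[\fC]_1$ (using Lemma~\ref{leadcell1}). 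The irreducible characters of $W$ and the invariants $\ba_\lambda$ are listed in \cite[Ch.~22]{Lusztig03}, which supplies the remaining data.

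With this in hand, the verification of \textbf{E1}--\textbf{E4} is a finite inspection. For each two-sided cell one records the values $\ba_\lambda$ of the irreducibles appearing in the $[\fC]_1$ for left cells $\fC$ inside it: \textbf{E1} holds provided these $\ba_\lambda$ are constant on each two-sided cell and monotone along $\leq_{\cLR}$; \textbf{E2'} holds once distinct two-sided cells are seen to carry distinct $\ba_\lambda$ (which is the case separately in each of the three regimes above); \textbf{E3} is read from the explicit decomposition of each two-sided cell into left cells; and \textbf{E4} is checked by computing $p_{1,w}$, hence $\Delta(w)$, on each left cell and observing that its minimum is attained at a unique element.

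The main obstacle is \textbf{P15}, which is not automatic from \textbf{E1}--\textbf{E4} in the unequal-parameter regime (cf.~Remark~\ref{p15equal}). Here I would use Remark~\ref{proofp15}: with \textbf{P4} and \textbf{P11} already available from Proposition~\ref{mainprop}, \textbf{P15} is equivalent to the identity $(*)$, which only needs to be tested when both $sw>w$ and $wt>w$. Both sides of $(*)$ expand, via the $\bC_s\bC_w$ rule and its right-handed analogue, into finite linear combinations of the basis vectors $e_y$ whose coefficients are polynomial expressions in the $\mu^{s_i}_{y,w}$ already computed. Since $|W|=2m$ and the combinatorics of $\mu^{s_i}_{y,w}$ is uniform within each of the three regimes, this reduces to a finite case-check; Lemma~\ref{Mp15prime} (\textbf{P15'}) provides an equivalent reformulation that should shorten the bookkeeping when working inside a single two-sided cell. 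This case analysis, although entirely mechanical, is where the bulk of the work sits.
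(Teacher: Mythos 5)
Your outline matches the paper's proof closely: both verify \textbf{E1}--\textbf{E4} for the dihedral group (by explicitly determining the Kazhdan--Lusztig basis, cells, $\ba_\lambda$- and $\Delta$-values), invoke Proposition~\ref{mainprop} for \textbf{P1}--\textbf{P14}, and then establish \textbf{P15} via condition $(*)$ of Remark~\ref{proofp15}. Two remarks. First, for the equal-parameter case the paper simply cites DuCloux \cite{Fokko}; your plan of re-verifying \textbf{E1}--\textbf{E4} there and invoking Remark~\ref{p15equal} is a valid and slightly more self-contained alternative, at the cost of a few extra lines of bookkeeping. Second, and more substantively, your treatment of \textbf{P15} is where the argument is thinner than it needs to be: you say the verification of $(*)$ ``reduces to a finite case-check'' because the $\mu^{s_i}_{y,w}$ are uniform, but this does not by itself yield a bound independent of $m$ -- for each $m$ there are on the order of $m$ elements $w$ with $s w > w$ and $w t > w$ to test. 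The paper's key reduction (which you need to make explicit) is that Lusztig's verification of $(*)$ for the \emph{infinite} dihedral group \cite[17.5]{Lusztig03} applies verbatim as long as the longest element $w_0=1_m$ is never reached in the computation, i.e.\ whenever $l(w)<m-2$; this leaves only the four boundary elements $1_{m-2},2_{m-2},1_{m-1},2_{m-1}$ to check directly, which the paper then does. Without this observation your ``finite case-check'' is not uniform in $m$ and the proof is incomplete.
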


\begin{proof} If $L(s_1)=L(s_2)$, this is proved by DuCloux \cite{Fokko}, 
following the approach in \cite[17.5]{Lusztig03} (concerning the infinite 
dihedral group). Now assume that $L(s_1)\neq L(s_2)$; in particular, 
$m\geq 4$ is even. Without loss of generality, we can assume that $L(s_1)>
L(s_2)$. It is probably possible to use arguments similar to those in 
\cite{Fokko} and \cite[17.5]{Lusztig03} (which essentially amount to
computing all structure constants $h_{x,y,z}$). However, in the present
case, it is rather straightforward to verify {\bf E1}--{\bf E4}. Indeed, 
by \cite[\S 5.4]{gepf}, we have 
\[ \Irr(W)=\{1_W,\varepsilon,\varepsilon_1,\varepsilon_2,\rho_1,\rho_2,
\ldots, \rho_{(m-2)/2)}\},\]
where $1_W$ is the trivial representation, $\varepsilon$ is the 
sign representation, $\varepsilon_1,\varepsilon_2$ are two further 
$1$-dimensional representations, and all $\rho_j$ are $2$-dimensional.
We fix the notation such that $s_1$ acts as $+1$ in $\varepsilon_1$
and as $-1$ in $\varepsilon_2$. Using \cite[8.3.4]{gepf}, we find
\begin{alignat*}{2}
\ba_{1_W} &= 0,&\qquad  f_{1_W} &= 1,\\
\ba_{\varepsilon_1} &= L(s_2),&\quad  f_{\varepsilon_1} &= 1,\\
\ba_{\rho_j} &= L(s_1), & \quad f_{\rho_j}&=\frac{m}{2-\zeta^{2j}-
\zeta^{-2j}} \qquad \mbox{for all $j$},\\
\ba_{\varepsilon_2} &= \frac{m}{2}\bigl(L(s_1)-L(s_2)\bigr)+L(s_2),
&\quad  f_{\varepsilon_2}&=1,\\
\ba_{\varepsilon} &= \frac{m}{2}\bigl(L(s_1)+L(s_2)\bigr)&\quad  
f_{\varepsilon}&=1;
\end{alignat*}
where $\zeta\in \C$ is a root of unity of order $m$.
Observe that, in the above list, the $\ba$-values are in strictly 
increasing order from top to bottom.

Now, by \cite[6.6, 7.5, 7.6]{Lusztig03} and \cite[Exc.~11.3]{gepf}, 
we have the following multiplication rules for the Kazhdan--Lusztig 
basis. For any $k \geq 0$, write $1_k=s_1s_2s_1 \cdots $ ($k$ factors) 
and $2_k=s_2s_1s_2 \cdots$ ($k$ factors). Given $k,l\in \Z$, we define 
$\delta_{k>l}$ to be $1$ if $k>l$ and to be $0$ otherwise. Then 
\begin{align*}
\bC_{1_1}\bC_{1_{k+1}}& =(v_{s_1}+v_{s_1}^{-1})\bC_{1_{k+1}},\\
\bC_{2_1}\bC_{2_{k+1}}&=(v_{s_2}+v_{s_2}^{-1})\bC_{2_{k+1}},\\
\bC_{2_1}\bC_{1_k}&=\bC_{2_{k+1}},\\
\bC_{1_1}\bC_{2_k}&=\bC_{1_{k+1}}+\delta_{k>1}\zeta\bC_{1_{k-1}}+
\delta_{k>3}\bC_{1_{k-3}},
\end{align*}
for any $0 \leq k <m$, where $\zeta=v_{s_1}v_{s_2}^{-1}+ 
v_{s_1}^{-1}v_{s_2}$.  Using this information, the pre-order relations 
$\leq_{\cL}$, $\leq_{\cR}$ and $\leq_{\cLR}$ are easily and explicitly 
determined; see \cite[8.8]{Lusztig03}. The two-sided cells and the partial 
order on them are  given by
\begin{equation*}
\{1_m\}\; \leq_{\cLR} \; \{1_{m-1}\} \; \leq_{\cLR} \;
W\setminus \{1_0,2_1,1_{m-1},1_m\} \; \leq_{\cLR} \; \{2_1\} \;
\leq_{\cLR}  \; \{1_{0}\}.  \tag{$\heartsuit$}
\end{equation*}
The set $W\setminus \{1_0,2_1,1_{m-1},1_m\}$ consists of two left cells,
$\{1_1,2_2,1_3,\ldots,2_{m-2}\}$ and $\{1_2,2_3,1_4, \ldots,2_{m-1}\}$, but 
these are not related by $\leq_{\cL}$. (If they were, then, by 
\cite[8.6]{Lusztig03}, the right descent set of the elements in one of 
them would have to be contained in the right descent set of the elements 
in the other one---which is not the case.) The other two-sided cells are 
just left cells. In particular, we see that {\bf E3} holds. 

Now we can also construct the representations given by the various 
left cells and decompose them into irreducibles; we obtain:
\begin{align*}
\{1_0\} &\quad \mbox{affords} \quad 1_W,\\
\{2_1\} &\quad \mbox{affords} \quad \varepsilon_1,\\
\{1_1,2_2,1_3,\ldots,2_{m-2}\} &\quad \mbox{affords} \quad 
\rho_1+\rho_2+\cdots +\rho_{(m-2)/2},\\
\{1_2,2_3,1_4, \ldots,2_{m-1}\}&\quad \mbox{affords} \quad 
\rho_1+\rho_2+\cdots +\rho_{(m-2)/2},\\
\{1_{m-1}\} &\quad \mbox{affords} \quad \varepsilon_2,\\
\{1_m\} &\quad \mbox{affords} \quad \varepsilon.
\end{align*}
Using this list and the above information on the $\ba$-values and
the partial order on the two-sided cells, we see that {\bf E1} and
{\bf E2} hold. 

Next, by \cite[7.4, 7.6]{Lusztig03} and \cite[Exc.~11.3]{gepf}, the
polynomials $p_{y,w}$ are explicitly known. Thus, we can determine the 
function $w \mapsto \Delta(w)$. We obtain 
\begin{align*}
\Delta(1_{2k}) = \Delta(2_{2k})&=kL(s_1)+kL(s_2) \quad \mbox{if 
$k \geq 0$},\\ 
\Delta(2_{1}) &= L(s_2)\\
\Delta(1_{2k+1}) &= (k+1)L(s_1) -kL(s_2)\quad \mbox{if $k\geq 0$}\\
\Delta(2_{2k+1}) &= kL(s_1)+(k-1)L(s_2) \quad \mbox{if $k\geq 1$}.
\end{align*}
Thus, we see that {\bf E4} holds. In the left cell $\{1_1,2_2,1_3,\ldots,
2_{m-2}\}$, the function $\Delta$ reaches its minimum at $1_1$; 
in the left cell $\{1_2,2_3,1_4, \ldots,2_{m-1}\}$, the minimum is reached
at $2_3$. We see that 
\begin{gather*}
\cD=\{1_0,\;2_1,\;1_1,\;2_3,\;1_{m-1},\;1_m\},\\
n_{1_0}=n_{2_1}=n_{1_1}=n_{2_3}=n_{1_m}=+1, \; n_{1_{m-1}}=-1.
\end{gather*}
Thus, we have verified that {\bf E1}--{\bf E4} hold for $W$. We also know
that {\bf P1}--{\bf P15} hold for every proper parabolic subgroup of
$W$. (Note that the only proper parabolic subgroups of $W$ are $\langle s_1
\rangle$ and $\langle s_2\rangle$.)  Hence, by Remark~\ref{remPE2} and 
Proposition~\ref{mainprop}, we can conclude that {\bf P1}--{\bf P14} hold 
for $W$.

It remains to verify {\bf P15}. For this purpose, we must check that 
condition ($*$) in Remark~\ref{proofp15} holds for all $w \in W$ and $i,j 
\in \{1,2\}$ such that $s_iw>w$, $ws_j>w$. A similar verification is 
done by Lusztig \cite[17.5]{Lusztig03} for the infinite dihedral group. 
We notice that the same arguments also work in our situation if $w$ is 
such that we do not encounter the longest element $w_0=1_m=2_m$ in the 
course of the verification. This certainly is the case if $l(w)<m-2$. 
Thus, we already know that ($*$) holds when $l(w)<m-2$. It remains to
verify ($*$) when $l(w)$ equals $m-2$ or $m-1$, that is, when $w\in 
\{1_{m-2},2_{m-2}, 1_{m-1},2_{m-1}\}$. 

Assume first that $w=1_{m-2}$. The left descent set of $w$ is $\{s_1\}$ 
and, since $m$ is even, the right descent set of $w$ is $\{s_2\}$. So we 
must check ($*$) with $s=s_2$ and $t=s_1$. Using the above multiplication 
formulas, we find:
\[(\bC_{2_1}.e_{1_{m-2}}).\breve{\bC}_{1_1}=e_{2_{m-1}}.\breve{\bC}_{1_1}.\]
Now, since $m$ is even, $\{s_2\}$ is the right descent set of $1_{m-1}$.
Hence right-handed versions of the above multiplication rules imply that
\[ (\bC_{2_1}.e_{1_{m-2}}).\breve{\bC}_{1_1}=e_{2_{m-1}}.\breve{\bC}_{1_1}
=e_{2_m}+\delta_{m>2}\breve{\zeta} e_{2_{m-2}}+\delta_{m>4}e_{2_{m-4}},\]
where $\breve{\zeta}=\breve{v}_{s_1}\breve{v}_{s_2}^{-1}+
\breve{v}_{s_1}^{-1}\breve{v}_{s_2}$. On the other hand, we have
\begin{align*}
\bC_{2_1}.(e_{1_{m-2}}.\breve{\bC}_{1_1})&=
\bC_{2_1}.\bigl(e_{1_{m-1}}+\delta_{m>3} \breve{\zeta} e_{1_{m-3}} +
\delta_{m>5}e_{2_{m-5}}\bigr)\\
&= e_{2_m}+\delta_{m>3} \breve{\zeta} e_{2_{m-2}}+\delta_{m>5}e_{2_{m-4}}.
\end{align*}
Now note that, since $m$ is even, we have $\delta_{m>3}=\delta_{m>2}$ and
$\delta_{m>4}=\delta_{m>5}$. Hence, we actually see that the expression
in ($*$) is zero.

Now assume that $w=2_{m-2}$. Then we must check ($*$) with $s=s_1$ and
$t=s_2$. Arguing as above, we find that
\begin{align*}
(\bC_{1_1}.e_{2_{m-2}}).\breve{\bC}_{2_1}&=\bigl(e_{1_{m-1}}+
\delta_{m>3}\zeta e_{1_{m-3}}+\delta_{m>5}e_{m-5}\bigr).\breve{\bC}_{2_1}
\\&=e_{1_{m}}+ \delta_{m>3}\zeta e_{1_{m-2}}+\delta_{m>5}e_{m-4},\\
\bC_{1_1}.(e_{2_{m-2}}.\breve{\bC}_{2_1})&=\bC_{1_1}.e_{2_{m-1}}
=e_{1_{m}}+ \delta_{m>2}\zeta e_{1_{m-2}}+\delta_{m>4}e_{m-4}.
\end{align*}
Again, we see that the difference of these two expressions is zero.

Next, let $w=1_{m-1}$. Then we must check ($*$) with $s=t=s_2$. We
obtain
\begin{align*}
(\bC_{2_1}.e_{1_{m-1}}).\breve{\bC}_{2_1}&=e_{2_m}.\breve{\bC}_{2_1}=
(\breve{v}_{s_2}+\breve{v}_{s_2}^{-1})e_{2_m},\\
\bC_{2_1}.(e_{1_{m-1}}.\breve{\bC}_{2_1})&=\bC_{2_1}.e_{1_m}=
(v_{s_2}+v_{s_2}^{-1})e_{2_m}.
\end{align*}
Hence the difference of these two expressions is a scalar multiple
of $e_{2_m}$. The description of $\leq_{\cLR}$ in ($\heartsuit$) now 
shows that ($*$) holds.

Finally, let $w=2_{m-1}$. Then we must check ($*$) with $s=t=s_1$.  We find
\[ (\bC_{1_1}.e_{2_{m-1}}).\breve{\bC}_{1_1}=\bigl(e_{1_{m}}+\delta_{m>2}
\zeta e_{1_{m-2}} +\delta_{m>4}e_{1_{m-4}}\bigr).\breve{\bC}_{1_1}.\]
Furthermore, we obtain:
\begin{align*}
e_{1_m}.\breve{\bC}_{1_1}&=(\breve{v}_{s_1}+\breve{v}_{s_1}^{-1})e_{1_m},\\
e_{1_{m-2}}.\breve{\bC}_{1_1}&=e_{1_{m-1}}+ \delta_{m>3}
\breve{\zeta} e_{1_{m-3}} + \delta_{m>5}e_{1_{m-5}}, \\ 
e_{1_{m-4}}.\breve{\bC}_{1_1}&=e_{1_{m-3}}+ \delta_{m>5}\breve{\zeta} 
e_{1_{m-5}} +\delta_{m>7}e_{1_{m-7}}.
\end{align*}
Inserting this into the above expression, we obtain
\begin{multline*}
(\bC_{1_1}.e_{2_{m-1}}).\breve{\bC}_{1_1}= (\breve{v}_{s_1}+ 
\breve{v}_{s_1}^{-1})e_{1_m}+ \delta_{m>2}\zeta e_{1_{m-1}}\\
+(\delta_{m>3}\zeta \breve{\zeta}+\delta_{m>4}) e_{1_{m-3}} +  
(\zeta+\breve{\zeta})\delta_{m>5}e_{1_{m-5}}+\delta_{m>7} e_{1_{m-7}}.
\qquad\qquad \end{multline*}
A similar computation yields
\begin{multline*}
\bC_{1_1}.(e_{2_{m-1}}.\breve{\bC}_{1_1})= (v_{s_1}+v_{s_1}^{-1})e_{1_m}+ 
\delta_{m>2}\breve{\zeta} e_{1_{m-1}}\\
+(\delta_{m>3}\zeta \breve{\zeta}+\delta_{m>4}) e_{1_{m-3}} +
(\zeta+\breve{\zeta})\delta_{m>5}e_{1_{m-5}}+\delta_{m>7} e_{1_{m-7}}
\qquad \qquad 
\end{multline*}
and so 
\[(\bC_{1_1}.e_{2_{m-1}}).\breve{\bC}_{1_1}-
\bC_{1_1}.(e_{2_{m-1}}.\breve{\bC}_{1_1})=(\breve{v}_{s_1}+
\breve{v}_{s_1}^{-1}-v_{s_1}-v_{s_1}^{-1})e_{1_m}+ \delta_{m>2}(\zeta -
\breve{\zeta})e_{1_{m-1}}.\]
The description of $\leq_{\cLR}$ in ($\heartsuit$) now shows that 
($*$) holds.

Thus, we have verified that {\bf P15} holds.
\end{proof}

\begin{table}[htbp] 
\caption{The invariants $f_\lambda$ and $\ba_\lambda$
for type $F_4$} \label{invF4}
{\footnotesize \begin{center}
$\begin{array}{|c|cc|cr|cc|cc|} \hline & \multicolumn{2}{c|}{b{>}2a{>}0}
& \multicolumn{2}{c|}{b{=}2a{>}0} & \multicolumn{2}{c|}{2a{>}b{>}a{>}0} &
\multicolumn{2}{c|}{b{=}a{>}0} \\
E^\lambda& f_\lambda & \ba_\lambda & f_\lambda & \ba_\lambda & f_\lambda
& \ba_\lambda & f_\lambda & \ba_\lambda \\ \hline
1_1 &1&0        &1&   0 & 1 & 0        & 1 & 0   \\
1_2 &1&12b{-}9a &2& 15a & 1 & 11b{-}7a & 8 & 4a   \\
1_3 &1&3a       &2&  3a & 1 & {-}b{+}5a& 8 & 4a \\
1_4 &1&12b{+}12a&1& 36a & 1 & 12b{+}12a& 1 &24a   \\
2_1 &1&3b{-}3a  &2&  3a & 1 & 2b{-}a   & 2 & a   \\
2_2 &1&3b{+}9a  &2& 15a & 1 & 2b{+}11a & 2 &13a  \\
2_3 &1&a        &1&   a & 1 & a        & 2 & a  \\
2_4 &1&12b{+}a  &1& 25a & 1 & 12b{+}a  & 2 &13a   \\
4_1 &2&3b{+}a   &2&  7a & 2 & 3b{+}a   & 8 & 4a  \\
9_1 &1&2b{-}a   &2&  3a & 1 & b{+}a    & 1 & 2a \\
9_2 &1&6b{-}2a  &1& 10a & 1 & 6b{-}2a  & 8 & 4a  \\
9_3 &1&2b{+}2a  &1&  6a & 1 & 2b{+}2a  & 8 & 4a  \\
9_4 &1&6b{+}3a  &2& 15a & 1 & 5b{+}5a  & 1 &10a  \\
6_1 &3&3b{+}a   &3&  7a & 3 & 3b{+}a   & 3 & 4a  \\
6_2 &3&3b{+}a   &3&  7a & 3 & 3b{+}a   &12 & 4a  \\
12_1&6&3b{+}a   &6&  7a & 6 & 3b{+}a   &24 & 4a  \\
4_2 &1&b        &1&  2a & 1 & b        & 2 &  a \\
4_3 &1&7b{-}3a  &1& 11a & 1 & 7b{-}3a  & 4 & 4a  \\
4_4 &1&b{+}3a   &1&  5a & 1 & b{+}3a   & 4 & 4a \\
4_5 &1&7b{+}6a  &1& 20a & 1 & 7b{+}6a  & 2 &13a  \\
8_1 &1&3b       &1&  6a & 1 & 3b       & 1 & 3a  \\
8_2 &1&3b{+}6a  &1& 12a & 1 & 3b{+}6a  & 1 & 9a  \\
8_3 &1&b{+}a    &2&  3a & 1 & 3a       & 1 & 3a \\
8_4 &1&7b{+}a   &2& 15a & 1 & 6b{+}3a  & 1 & 9a  \\
16_1&2&3b{+}a   &2&  7a & 2 & 3b{+}a   & 4 & 4a \\ \hline
\multicolumn{9}{c}{\mbox{(This table corrects some errors concerning 
$f_\lambda$ in \cite[Table~1]{mylaus}.)}}
\end{array}$
\end{center}} 
\end{table}

\begin{prop} \label{p115f4} Let $(W,S)$ be of type $F_4$ with generators 
and diagram given by:
\begin{center}
\begin{picture}(200,20)
\put( 10, 5){$F_4$}
\put( 61,13){$s_1$}
\put( 91,13){$s_2$}
\put(121,13){$s_3$}
\put(151,13){$s_4$}
\put( 65, 5){\circle*{5}}
\put( 95, 5){\circle*{5}}
\put(125, 5){\circle*{5}}
\put(155, 5){\circle*{5}}
\put(105,2.5){$>$}
\put( 65, 5){\line(1,0){30}}
\put( 95, 7){\line(1,0){30}}
\put( 95, 3){\line(1,0){30}}
\put(125, 5){\line(1,0){30}}
\end{picture}
\end{center}
Then {\bf P1}--{\bf P15} hold for any weight function $L \colon W \rightarrow
\Gamma$ and any monomial order $\leq$ such that $L(s_i)>0$ for $i=1,2,3,4$.
\end{prop}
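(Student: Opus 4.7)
The plan mirrors the strategy used for $I_2(m)$ in Proposition~\ref{p115i2}. Writing $a := L(s_1) = L(s_2)$ and $b := L(s_3) = L(s_4)$, the equal parameter case $a = b$ is covered by the geometric interpretation for finite Weyl groups (Remark~\ref{equalgeo}). The diagram automorphism of $F_4$ swapping $(s_1,s_2) \leftrightarrow (s_4,s_3)$ interchanges $a$ and $b$, so without loss of generality I may assume $b > a > 0$. With respect to the monomial order on $\Gamma$, the unequal parameter case then splits into three regimes according to the position of $b$ relative to $2a$: namely $b > 2a$, $b = 2a$, and $a < b < 2a$. These correspond exactly to the first three columns of Table~\ref{invF4}.

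In each regime I would verify {\bf E1}--{\bf E4} by explicit (computer-assisted) computation. The inputs are: (i) the Kazhdan--Lusztig polynomials $p_{y,w}$ for $y \leq w$ in $W$; (ii) the coefficients $\mu_{y,w}^s$; and (iii) the scalars $\Delta(w)$. From (i), (ii) one reads off the pre-orders $\leq_{\cL}, \leq_{\cR}, \leq_{\cLR}$ and constructs the left cell modules $[\fC]_1$, which can be decomposed into irreducibles $E^\lambda$ by computing characters. Comparing these decompositions with the $\ba_\lambda$-values in Table~\ref{invF4} yields {\bf E1} and {\bf E2}; checking that distinct left cells within a common two-sided cell are pairwise incomparable under $\leq_{\cL}$ gives {\bf E3}; and checking that $\Delta|_{\fC}$ attains its minimum at a unique point of each left cell $\fC$ gives {\bf E4}. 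Every proper parabolic subgroup of $W$ has rank $\leq 3$ and is of a type for which {\bf P1}--{\bf P15} are already known (equal parameter cases, or $B_3/C_3$ with a monomial order on the parameters, handled by the work of Bonnaf\'e--Iancu and the author cited in the introduction), so Remark~\ref{remPE2} gives {\bf E1}--{\bf E4} for every parabolic subgroup. Proposition~\ref{mainprop} then delivers {\bf P1}--{\bf P14} for $W$.

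The main obstacle is {\bf P15}, since in the unequal parameter case the formal reduction of {\bf P15} to {\bf P4}, {\bf P9}, {\bf P10} available in the equal parameter setting (Remark~\ref{p15equal}) is no longer at our disposal---the requisite positivity of the structure constants $h_{x,y,z}$ can fail (Remark~\ref{equalgeo}). I would instead use the reformulation in Remark~\ref{proofp15}: since {\bf P4} and {\bf P11} are established, {\bf P15} is equivalent to identity $(*)$ for every $s,t \in S$ and every $w \in W$ with $sw > w$ and $wt > w$. In each of the three parameter regimes this is a finite check, carried out on a computer using the explicit action of $\bC_s$ and $\breve{\bC}_t$ on the basis $\{e_w \mid w \in W\}$ determined by the $p_{y,w}$ and $\mu_{y,w}^s$. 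The delicate point, in contrast to the dihedral case treated symbolically in Proposition~\ref{p115i2}, is that here all $|W| = 1152$ elements and all admissible pairs $(s,t)$ must be enumerated in each regime, and one must check that the resulting difference in $(*)$ is an $\cA$-linear combination of basis vectors $e_y$ with $y \leq_{\cLR} w$ and $y \not\sim_{\cLR} w$, using the partition into two-sided cells already extracted in the verification of {\bf E1}--{\bf E4}.
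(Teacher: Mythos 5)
Your overall strategy is the same as the paper's: explicit computation of the $p_{y,w}$ and $\mu_{y,w}^s$ (here via \textsf{CHEVIE} and the data in \cite{compf4}), from which one extracts the preorders and left-cell modules, verifies \textbf{E1}--\textbf{E4} by comparison with Tables~\ref{invF4} and~\ref{lrgraph}, checks ($*$) from Remark~\ref{proofp15} directly for \textbf{P15}, and invokes Proposition~\ref{mainprop} for \textbf{P1}--\textbf{P14}. The split into regimes $b>2a$, $b=2a$, $a<b<2a$ (plus equal parameters) matches the columns of Table~\ref{invF4}.

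There is, however, a genuine gap in your treatment of the parabolic subgroups. Proposition~\ref{mainprop} requires \textbf{E1}--\textbf{E4} to hold for $W$ \emph{and} for every parabolic subgroup $W_I$. You propose to obtain \textbf{E1}--\textbf{E4} for the rank-$3$ parabolics of type $B_3/C_3$ via Remark~\ref{remPE2}, citing the Bonnaf\'e--Iancu results (and Theorem~\ref{Mp115a}(b)) for \textbf{P1}--\textbf{P15} in type $B_n$. But those results are established only under the \emph{asymptotic} condition $b>ra$ for all $r\in\Z_{\geq 1}$. In the regimes $b=2a$ and $a<b<2a$, the $B_3$ parameters fail this condition, so \textbf{P1}--\textbf{P15} for those parabolics are not available from prior work, and the appeal to Remark~\ref{remPE2} is unjustified. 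The correct route, as the paper notes parenthetically, is to verify \textbf{E1}--\textbf{E4} directly for the proper parabolic subgroups by the same computational methods already deployed for $W$ itself; this is a finite check of exactly the same nature as the one you describe for $F_4$ and should be stated as such rather than deferred to a citation that does not apply.
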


\begin{table}[htbp]
\caption{Partial order on two-sided cells in type $F_4$} 
\label{lrgraph} 
{\footnotesize \begin{center}
\renewcommand{\arraystretch}{1.3} \begin{tabular}{l}
\begin{picture}(325,265)
\put( 16,  0){$a=b$}
\put( 27, 20){\circle{4}}
\put( 32, 18){\scriptsize{$1_4$}}
\put( 27, 22){\line(0,1){16}}
\put( 27, 40){\circle{4}}
\put( 32, 38){\scriptsize{\fbox{$4_5$}}}
\put( 27, 42){\line(0,1){16}}
\put( 27, 60){\circle{4}}
\put( 32, 58){\scriptsize{$9_4$}}
\put( 28, 62){\line(4,5){13}}
\put( 26, 62){\line(-4,5){13}}
\put( 12, 80){\circle{4}}
\put(  0, 78){\scriptsize{$8_2$}}
\put( 42, 80){\circle{4}}
\put( 47, 78){\scriptsize{$8_4$}}
\put( 13, 82){\line(4,5){13}}
\put( 41, 82){\line(-4,5){13}}
\put( 27,100){\circle{4}}
\put(  0, 99){\scriptsize{\fbox{$12_1$}}}
\put( 28,102){\line(4,5){13}}
\put( 26,102){\line(-4,5){13}}
\put( 12,120){\circle{4}}
\put(  0,118){\scriptsize{$8_1$}}
\put( 42,120){\circle{4}}
\put( 47,118){\scriptsize{$8_3$}}
\put( 13,122){\line(4,5){13}}
\put( 41,122){\line(-4,5){13}}
\put( 27,140){\circle{4}}
\put( 32,140){\scriptsize{$9_1$}}
\put( 27,142){\line(0,1){16}}
\put( 27,160){\circle{4}}
\put( 32,158){\scriptsize{\fbox{$4_2$}}}
\put( 27,162){\line(0,1){16}}
\put( 27,180){\circle{4}}
\put( 32,178){\scriptsize{$1_1$}}
\put( 93,  0){$b=2a$}
\put(105, 20){\circle{4}}
\put(110, 18){\scriptsize{$1_4$}}
\put(105, 22){\line(0,1){16}}
\put(105, 40){\circle{4}}
\put(110, 38){\scriptsize{$2_4$}}
\put(105, 42){\line(0,1){16}}
\put(105, 60){\circle{4}}
\put(110, 58){\scriptsize{$4_5$}}
\put(105, 62){\line(0,1){16}}
\put(105, 80){\circle{4}}
\put( 84, 78){\scriptsize{\fbox{$1_2$}}}
\put(106, 82){\line(1,1){14}}
\put(104, 82){\line(-1,2){11.7}}
\put(121, 98){\circle{4}}
\put(126, 96){\scriptsize{$4_3$}}
\put(121,100){\line(0,1){14}}
\put(121,116){\circle{4}}
\put(126,114){\scriptsize{$9_2$}}
\put( 91,107){\circle{4}}
\put( 79,105){\scriptsize{$8_2$}}
\put(106,132){\line(1,-1){14}}
\put(104,132){\line(-1,-2){11.7}}
\put(105,134){\circle{4}}
\put( 80,132){\scriptsize{\fbox{$16_1$}}}
\put(106,136){\line(1,1){14}}
\put(104,136){\line(-1,2){11.7}}
\put(121,152){\circle{4}}
\put(126,150){\scriptsize{$9_3$}}
\put(121,154){\line(0,1){14}}
\put(121,170){\circle{4}}
\put(126,168){\scriptsize{$4_4$}}
\put( 91,161){\circle{4}}
\put( 79,159){\scriptsize{$8_1$}}
\put(106,186){\line(1,-1){14}}
\put(104,186){\line(-1,-2){11.7}}
\put(105,188){\circle{4}}
\put( 84,186){\scriptsize{\fbox{$1_3$}}}
\put(105,190){\line(0,1){16}}
\put(105,208){\circle{4}}
\put(110,206){\scriptsize{$4_2$}}
\put(105,210){\line(0,1){16}}
\put(105,228){\circle{4}}
\put(110,226){\scriptsize{$2_3$}}
\put(105,230){\line(0,1){16}}
\put(105,248){\circle{4}}
\put(110,246){\scriptsize{$1_1$}}
\put(174,  0){$2a>b>a$}
\put(195, 20){\circle{4}}
\put(200, 18){\scriptsize{$1_4$}}
\put(195, 22){\line(0,1){10}}
\put(195, 34){\circle{4}}
\put(200, 32){\scriptsize{$2_4$}}
\put(195, 36){\line(0,1){10}}
\put(195, 48){\circle{4}}
\put(200, 46){\scriptsize{$4_5$}}
\put(195, 50){\line(0,1){10}}
\put(195, 62){\circle{4}}
\put(200, 60){\scriptsize{$2_2$}}
\put(195, 64){\line(0,1){10}}
\put(195, 76){\circle{4}}
\put(181, 74){\scriptsize{$9_4$}}
\put(197, 77){\line(3,2){15.7}}
\put(215, 88){\circle{4}}
\put(220, 86){\scriptsize{$8_4$}}
\put(215, 90){\line(0,1){10}}
\put(215,102){\circle{4}}
\put(220,100){\scriptsize{$1_2$}}
\put(215,104){\line(0,1){10}}
\put(215,116){\circle{4}}
\put(220,114){\scriptsize{$4_3$}}
\put(215,118){\line(0,1){10}}
\put(215,130){\circle{4}}
\put(220,128){\scriptsize{$9_2$}}
\put(195,142){\circle{4}}
\put(168,140){\scriptsize{\fbox{$16_1$}}}
\put(197,141){\line(3,-2){15.7}}
\put(180,109){\circle{4}}
\put(167,107){\scriptsize{$8_2$}}
\put(180,111){\line(1,2){14.5}}
\put(180,107){\line(1,-2){14.5}}
\put(197,143){\line(3,2){15.7}}
\put(215,154){\circle{4}}
\put(220,152){\scriptsize{$9_3$}}
\put(215,156){\line(0,1){10}}
\put(215,168){\circle{4}}
\put(220,166){\scriptsize{$4_4$}}
\put(215,170){\line(0,1){10}}
\put(215,182){\circle{4}}
\put(220,180){\scriptsize{$1_3$}}
\put(215,184){\line(0,1){10}}
\put(215,196){\circle{4}}
\put(220,194){\scriptsize{$8_3$}}
\put(195,208){\circle{4}}
\put(183,206){\scriptsize{$9_1$}}
\put(197,207){\line(3,-2){15.7}}
\put(180,175){\circle{4}}
\put(168,173){\scriptsize{$8_1$}}
\put(180,177){\line(1,2){14.5}}
\put(180,173){\line(1,-2){14.5}}
\put(195,210){\line(0,1){10}}
\put(195,222){\circle{4}}
\put(200,220){\scriptsize{$2_1$}}
\put(195,224){\line(0,1){10}}
\put(195,236){\circle{4}}
\put(200,234){\scriptsize{$4_2$}}
\put(195,238){\line(0,1){10}}
\put(195,250){\circle{4}}
\put(200,248){\scriptsize{$2_3$}}
\put(195,252){\line(0,1){10}}
\put(195,264){\circle{4}}
\put(200,262){\scriptsize{$1_1$}}
\put(284,  0){$b>2a$}
\put(295, 20){\circle{4}}
\put(300, 18){\scriptsize{$1_4$}}
\put(295, 22){\line(0,1){16}}
\put(295, 40){\circle{4}}
\put(300, 38){\scriptsize{$2_4$}}
\put(296, 42){\line(1,1){15.5}}
\put(294, 42){\line(-1,1){15.5}}
\put(277, 59){\circle{4}}
\put(264, 57){\scriptsize{$1_2$}}
\put(313, 59){\circle{4}}
\put(318, 57){\scriptsize{$4_5$}}
\put(296, 76){\line(1,-1){15.5}}
\put(294, 76){\line(-1,-1){15.5}}
\put(295, 78){\circle{4}}
\put(301, 75){\scriptsize{$8_4$}}
\put(296, 80){\line(3,2){15.5}}
\put(294, 80){\line(-1,1){17}}
\put(313, 92){\circle{4}}
\put(318, 90){\scriptsize{$9_4$}}
\put(311, 93){\line(-5,4){33.5}}
\put(313, 94){\line(0,1){14}}
\put(313,110){\circle{4}}
\put(318,108){\scriptsize{$2_2$}}
\put(313,112){\line(0,1){14}}
\put(313,128){\circle{4}}
\put(318,126){\scriptsize{$8_2$}}
\put(296,140){\line(3,-2){15.5}}
\put(295,142){\circle{4}}
\put(266,140){\scriptsize{\fbox{$16_1$}}}
\put(276, 99){\circle{4}}
\put(264, 97){\scriptsize{$4_3$}}
\put(276,101){\line(0,1){18}}
\put(276,121){\circle{4}}
\put(264,119){\scriptsize{$9_2$}}
\put(294,140){\line(-1,-1){17}}
\put(296,144){\line(3,2){15.5}}
\put(294,144){\line(-1,1){17}}
\put(313,156){\circle{4}}
\put(318,154){\scriptsize{$8_1$}}
\put(313,158){\line(0,1){14}}
\put(313,174){\circle{4}}
\put(318,172){\scriptsize{$2_1$}}
\put(313,176){\line(0,1){14}}
\put(313,192){\circle{4}}
\put(318,190){\scriptsize{$9_1$}}
\put(311,191){\line(-5,-4){33.5}}
\put(296,204){\line(3,-2){15.5}}
\put(295,206){\circle{4}}
\put(301,204){\scriptsize{$8_3$}}
\put(276,163){\circle{4}}
\put(264,161){\scriptsize{$9_3$}}
\put(276,165){\line(0,1){18}}
\put(276,185){\circle{4}}
\put(264,183){\scriptsize{$4_4$}}
\put(294,204){\line(-1,-1){17}}
\put(296,208){\line(1,1){15.5}}
\put(294,208){\line(-1,1){15.5}}
\put(277,225){\circle{4}}
\put(264,223){\scriptsize{$1_3$}}
\put(313,225){\circle{4}}
\put(318,223){\scriptsize{$4_2$}}
\put(296,242){\line(1,-1){15.5}}
\put(294,242){\line(-1,-1){15.5}}
\put(295,244){\circle{4}}
\put(300,242){\scriptsize{$2_3$}}
\put(295,246){\line(0,1){16}}
\put(295,264){\circle{4}}
\put(300,262){\scriptsize{$1_1$}}
\end{picture} \\
$\quad$ A box indicates a two-sided cell with several irreducible 
components, given as follows:\\
$\quad \mbox{\fbox{$4_2$}}=\{2_1,2_3,4_2\},\quad  
\mbox{\fbox{$4_5$}}=\{2_2,2_4,4_5\},\quad 
\mbox{\fbox{$1_3$}}=\{1_3,2_1,8_3,9_1\},\quad 
\mbox{\fbox{$1_2$}}=\{1_2,2_2,8_4,9_4\}$, \\
$\qquad\quad\mbox{\fbox{$12_1$}}=\{1_2,1_3,4_1,4_3,4_4,6_1,6_2,9_2,9_3,12_1,
16_1\},\quad \mbox{\fbox{$16_1$}}=\{4_1,6_1,6_2,12_1,16_1\}$.\\
$\quad$ Otherwise, the two-sided cell contains just one irreducible 
respresentation.
\end{tabular}
\end{center}}
\end{table}

\begin{proof} The weight function $L$ is specified by $a:=L(s_1)=L(s_2)>0$
and $b:=L(s_3)=L(s_4)>0$. We may assume without loss of generality that 
$b\geq a$. The preorder relations $\leq_{\cL}$, $\leq_{\cR}$, $\leq_{\cLR}$ 
and the corresponding equivalence relations on $W$ have been determined
in \cite{compf4}, based on an explicit computation of all the polynomials 
$p_{y,w}$ (where $y\leq w$ in $W$) and all polynomials $\mu_{y,w}^s$ (where 
$s \in S$ and $sy<y<w<sw$) using {\sf CHEVIE} \cite{chevie}. (The programs
are available upon request.) Once all this information is available, it is 
also a straightforward matter to check that condition ($*$) in 
Remark~\ref{proofp15} is satisfied, that is, {\bf P15} holds. Furthermore,
{\bf E3} and {\bf E4} are explicitly stated in \cite{compf4}. 

To check {\bf E1} and {\bf E2}, it is sufficient to use the information 
contained in Table~\ref{invF4} (which is taken from \cite[p.~318]{mylaus})
and Table~\ref{lrgraph} (which is taken from \cite[p.~362]{compf4}). In 
these tables, the irreducible representations of $W$ are denoted by $d_i$ 
where $d$ is the dimension and $i$ is an additional index; for example, 
$1_1$ is the trivial representation, $1_4$ is the sign representation and 
$4_2$ is the reflection representation. 

Thus, by Proposition~\ref{mainprop}, {\bf P1}--{\bf P14} also hold for $W$. 
(Note that, using similar computational methods, {\bf E1}--{\bf E4} are 
easily verified for all proper parabolic subgroups.)
\end{proof}

\begin{thm} \label{Mp115a} Lusztig's conjectures {\bf P1}--{\bf P15} hold in 
the following cases.
\begin{itemize}
\item[(a)] The {\em equal parameter case} where $\Gamma=\Z$ and $L(s)=a>0$
for all $s \in S$ (where $a$ is fixed).
\item[(b)] $(W,S)$ of type $B_n$, $F_4$ or $I_2(m)$ ($m$ even), with
weight function $L \colon W \rightarrow \Gamma$ given by:
\begin{center}
\makeatletter
\vbox{\begin{picture}(200,60)
\put( 10,40){$B_n$}
\put( 50,40){\@dot{5}}
\put( 48,47){$b$}
\put( 50,40){\line(1,0){20}}
\put( 59,43){$\scriptstyle{4}$}
\put( 70,40){\@dot{5}}
\put( 68,47){$a$}
\put( 70,40){\line(1,0){30}}
\put( 90,40){\@dot{5}}
\put( 88,47){$a$}
\put(110,40){\@dot{1}}
\put(120,40){\@dot{1}}
\put(130,40){\@dot{1}}
\put(140,40){\line(1,0){10}}
\put(150,40){\@dot{5}}
\put(147,47){$a$}

\put( 10, 12){$I_2(m)$}
\put( 10, 02){$\;\scriptstyle{m\, {\rm even}}$}
\put( 50, 07){\@dot{5}}
\put( 48, 14){$b$}
\put( 56, 10){$\scriptstyle{m}$}
\put( 50, 07){\line(1,0){20}}
\put( 70, 07){\@dot{5}}
\put( 68, 14){$a$}

\put(103, 12){$F_4$}
\put(130, 07){\@dot{5}}
\put(128, 14){$a$}
\put(130, 07){\line(1,0){20}}
\put(150, 07){\@dot{5}}
\put(148, 14){$a$}
\put(150, 07){\line(1,0){20}}
\put(158, 10){$\scriptstyle{4}$}
\put(170, 07){\@dot{5}}
\put(168, 14){$b$}
\put(170, 07){\line(1,0){20}}
\put(190, 07){\@dot{5}}
\put(188, 14){$b$}
\end{picture}
\makeatother}
\end{center}
where $a,b \in \Gamma_{>0}$ are such that $b>ra$ for all $r\in\Z_{\geq 1}$.
\end{itemize}
\end{thm}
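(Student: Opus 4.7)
My plan is to prove Theorem~\ref{Mp115a} by assembling the case-by-case verifications that have now been carried out, both in earlier sections of this paper and in previous work cited in the introduction. There is no single argument that handles all cases simultaneously; instead, the point of the theorem is that together with the pieces already in place, the list is complete. So I would simply reduce to irreducible $(W,S)$ (since {\bf P1}--{\bf P15} pass to direct products of Coxeter systems in a routine way, each being a statement about elements in one two-sided cell at a time) and then treat each irreducible type.

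For part (a), the equal parameter case, I would split according to whether $W$ is crystallographic or not. If $(W,S)$ is of type $A_{n-1}$, $B_n$, $D_n$, $E_6$, $E_7$, $E_8$, $F_4$ or $G_2$, then by Remark~\ref{equalgeo} the required non-negativity of the coefficients of the polynomials $p_{y,w}$ and of the structure constants $h_{x,y,z}$ is known from the geometric interpretation of the Kazhdan--Lusztig basis due to Kazhdan--Lusztig \cite{KaLu2} and Springer \cite{Spr}, so that {\bf P1}--{\bf P15} follow by \cite[Chap.~15]{Lusztig03}. In the remaining non-crystallographic cases, $W$ is of type $I_2(m)$, $H_3$ or $H_4$. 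Type $I_2(m)$ is covered by Proposition~\ref{p115i2} (whose proof treats the equal and unequal parameter cases uniformly). Types $H_3$ and $H_4$ are covered by Example~\ref{h34}, where the ingredients are Alvis's computation \cite{Alvis87} of the polynomials $p_{y,w}$, the invariants $\ba_\lambda$ from Alvis--Lusztig \cite{AlLu82}, and the general reduction {\bf E1}--{\bf E4} $\Longrightarrow$ {\bf P1}--{\bf P15} established in Proposition~\ref{mainprop} (with Remark~\ref{p15equal} handling {\bf P15} in the equal parameter case).

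For part (b), I would again go type by type. Type $F_4$ with arbitrary positive weight $L$ is exactly the content of Proposition~\ref{p115f4}, and type $I_2(m)$ with $m$ even is exactly Proposition~\ref{p115i2}, so both are already settled. This leaves type $B_n$ in the ``asymptotic'' regime where $b>ra$ for all $r\in \Z_{\geq 1}$; here I would cite the work of Bonnaf\'e, Iancu and the author---namely \cite{BI}, \cite{BI2}, \cite{geia06}, \cite{myrel06}---which verifies {\bf P1}--{\bf P15} under precisely this hypothesis on the monomial order. (Note that our general setup with an abelian group $\Gamma$ carrying a monomial order is exactly what is needed to make the condition $b>ra$ for all $r$ meaningful, and these references work in that setting.)

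The main obstacle conceptually is not in the present theorem itself, which is essentially a bookkeeping result, but in the preparatory work: Proposition~\ref{p115i2} and Proposition~\ref{p115f4} are where the real content lies, since they require one to verify {\bf E1}--{\bf E4} (and, in the unequal parameter case, also the explicit verification of condition $(*)$ of Remark~\ref{proofp15} to get {\bf P15}) on the basis of explicit Kazhdan--Lusztig computations. Once those are in hand, the assembly of the theorem is purely a matter of invoking the appropriate reference in each case.
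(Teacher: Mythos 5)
Your assembly for part~(a) and for types $I_2(m)$ and $F_4$ in part~(b) essentially matches the paper. For~(a) the paper's proof simply invokes Remark~\ref{equalgeo} (geometry for Weyl groups, DuCloux/Alvis for the remaining types); your route through Proposition~\ref{p115i2} and Example~\ref{h34} for the non-crystallographic types is a legitimate alternative that the paper itself offers, so no issue there. The reduction to irreducible $(W,S)$ is also standard and is used in Corollary~\ref{Mp115b}.

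There is, however, a genuine gap in your treatment of type $B_n$. You write that \cite{BI}, \cite{BI2}, \cite{geia06}, \cite{myrel06} ``verify {\bf P1}--{\bf P15} under precisely this hypothesis,'' but that is not what those references accomplish. According to the paper, \cite[Theorem~1.3]{geia06} only establishes {\bf P1}--{\bf P15} \emph{except possibly} {\bf P9}, {\bf P10}, {\bf P15}; the implication $(\heartsuit)$ from \cite[Theorem~5.13]{myrel06} yields {\bf P9}, {\bf P10} via \cite[Cor.~7.12]{myrel06}; and \cite[Prop.~7.6]{myrel06} only proves the weaker property {\bf P15$'$} under an extra hypothesis ($y \sim_{\cL} x' \sim_{\cR} w^{-1}$). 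Completing the $B_n$ case therefore requires two pieces of new input supplied by the present paper: (i) the observation that when the extra hypothesis of \cite[Prop.~7.6]{myrel06} fails, both sides of {\bf P15$'$} vanish by {\bf P9} and {\bf P10}, so {\bf P15$'$} holds unconditionally; and (ii) Lemma~\ref{Mp15prime}, which shows that {\bf P15$'$} together with {\bf P1}, {\bf P4}, {\bf P7}, {\bf P8} implies {\bf P15}. Without step~(ii) in particular you cannot go from what the references actually prove to the conclusion of the theorem; Lemma~\ref{Mp15prime} is one of the paper's original contributions and should be cited explicitly rather than folded into a blanket reference to prior work.
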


\begin{proof} (a) See Remark~\ref{equalgeo}. 
(b) For types $I_2(m)$ ($m$ even) and $F_4$, see Propositions~\ref{p115i2} 
and \ref{p115f4}. Now let $W$ be of type $B_n$ with parameters as above.  
The left, right and two-sided cells are explicitly determined by 
Bonnaf\'e and Iancu \cite{BI}, \cite{BI2}. A special feature of this case 
is that all left cells give rise to irreducible representations of $W$; see 
\cite[Prop.~7.9]{BI}; furthermore, two left cells give rise to isomorphic 
irreducible representations of $W$ if and only if they contained in the same 
two-sided cell; see \cite[\S 3]{BI2}. Based on these results, it is shown
in \cite[Theorem~1.3]{geia06} that {\bf P1}--{\bf P15} hold except possibly 
{\bf P9}, {\bf P10}, {\bf P15}. In \cite[Theorem~5.13]{myrel06}, the 
following implication is shown for all $x,y \in W$:
\begin{equation*}
x \sim_{\cLR} y \quad \mbox{and} \quad x \leq_{\cL} y  \quad
\Rightarrow \quad x \sim_{\cL} y.\tag{$\heartsuit$}
\end{equation*}
This then yields {\bf P9}, {\bf P10}; see \cite[Cor.~7.12]{myrel06}. 
Finally, {\bf P15$^\prime$} is shown in \cite[Prop.~7.6]{myrel06} under the 
additional assumption that $y \sim_{\cL} x' \sim_{\cR} w^{-1}$. However, if 
this additional assumption is not satisfied, then one easily sees, using 
{\bf P9} and {\bf P10}, that both sides of {\bf P15$^\prime$} are zero. Thus, 
{\bf P15$^\prime$} holds in general and then Lemma~\ref{Mp15prime} is used 
to deduce that {\bf P15} also holds. 
\end{proof}

\begin{cor} \label{Mp115b} Assume that $W$ is finite and let $L_0 \colon W 
\rightarrow \Gamma_0$ be the ``universal'' weight function of 
Remark~\ref{Mrem12}. Then {\bf P1}--{\bf P15} hold for at least one 
monomial order on $\Gamma_0$
where $L_0(s)>0$ for all $s\in S$.
\end{cor}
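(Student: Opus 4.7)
The plan is to reduce this to Theorem~\ref{Mp115a} by (i) handling products, and (ii) in each irreducible case, constructing a monomial order on $\Gamma_0$ that meets the hypothesis of the applicable clause of that theorem.

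First I would dispose of reducibility. If $(W,S)$ decomposes as $(W_1,S_1)\times(W_2,S_2)$, then conjugacy classes of $S$ split as $(\text{classes in }S_1)\sqcup(\text{classes in }S_2)$, so $\Gamma_0(W)=\Gamma_0(W_1)\oplus\Gamma_0(W_2)$, and $L_0$ for $W$ restricts to the universal weight function on each factor. Since the Kazhdan--Lusztig basis, cells, $\ba$-function and structure constants of $\bH(W,S,L_0)$ factor through the corresponding data for the $\bH(W_i,S_i,L_0|_{W_i})$, all of {\bf P1}--{\bf P15} hold for $W$ as soon as they hold for each factor, provided we equip $\Gamma_0(W)$ with a monomial order extending chosen monomial orders on $\Gamma_0(W_i)$ (e.g.\ the lexicographic sum). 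So I may assume $W$ is irreducible.

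Next I would separate into two cases according to the number of conjugacy classes of~$S$.  If $W$ is of type $A_n$, $D_n$, $E_6$, $E_7$, $E_8$, $H_3$, $H_4$, or $I_2(m)$ with $m$ odd, then $S$ has a single conjugacy class; hence $\Gamma_0=\Z$, the function $L_0$ takes the constant value $1$ on $S$, and the natural order on $\Z$ is a monomial order with $L_0(s)=1>0$. Theorem~\ref{Mp115a}(a) (the equal parameter case) then gives {\bf P1}--{\bf P15}.

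The remaining irreducible types are $B_n$, $F_4$, and $I_2(m)$ with $m$ even; in each case $S$ has exactly two conjugacy classes, so $\Gamma_0\cong\Z a\oplus\Z b$. I would label the classes so that $a$ corresponds to the ``non-$b$'' generators in the diagrams displayed in Theorem~\ref{Mp115a}(b) (for $B_n$ this means $b$ is attached to the generator at the end of the edge marked $4$; for $F_4$ and $I_2(m)$ either labelling is fine). Then I would put the lexicographic order on $\Gamma_0$ with $b$ dominant, i.e.\ $ma+nb<m'a+n'b$ iff $n<n'$, or $n=n'$ and $m<m'$. This is easily checked to be a total order compatible with addition, hence a monomial order, and it satisfies $a>0$, $b>0$, and $b>ra$ for every $r\in\Z_{\geq 1}$ (since $(r,0)<(0,1)$). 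Now Theorem~\ref{Mp115a}(b) applies and gives {\bf P1}--{\bf P15}.

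There is no real obstacle here: the substantive input---the verification of {\bf P1}--{\bf P15} in types $B_n$, $F_4$, $I_2(m)$ under the ``asymptotic'' condition $b>ra$, and in all single-class types in the equal parameter case---has already been absorbed into Theorem~\ref{Mp115a}. The only point that requires a sentence of care is confirming that the lexicographic recipe on $\Gamma_0\cong\Z^2$ does produce a monomial order compatible with the particular choice of labelling forced by the $B_n$ diagram; but this is automatic once the labelling is fixed as above.
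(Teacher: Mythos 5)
Your proof is correct and follows the same route as the paper's: reduce to the irreducible case, observe that single-class types fall under Theorem~\ref{Mp115a}(a) with $\Gamma_0=\Z$, and for types $B_n$, $F_4$, $I_2(m)$ ($m$ even) pick a monomial order on $\Gamma_0\cong\Z^2$ with $b>ra$ for all $r\geq 1$ so that Theorem~\ref{Mp115a}(b) applies. You have merely made explicit what the paper compresses into ``standard reduction arguments'' and ``we choose a monomial order as in Theorem~\ref{Mp115a}(b)'', namely the factorization of $\Gamma_0$ over direct products and the lexicographic order with $b$ dominant.
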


\begin{proof} By standard reduction arguments, we can assume that $(W,S)$
is irreducible. If $(W,S)$ if of type $B_n$, $F_4$ or $I_2(m)$ ($m$ even),
we choose a monomial order as in Theorem~\ref{Mp115a}(b). Otherwise, we
are automatically in the equal parameter case. Hence {\bf P1}--{\bf P15}
hold by Theorem~\ref{Mp115a}(a).
\end{proof}

Finally, we can show that Theorem~\ref{MisoH} holds without using
the hypothesis that {\bf P1}--{\bf P15} are satisfied!

\begin{cor} \label{Mp115c} Let $R \subseteq \C$ be a field. Then the
statements in Theorem~\ref{MisoH} hold for any weight function $L \colon 
W \rightarrow \Gamma$ where $\Gamma$ is an abelian group such that 
$A=R[\Gamma]$ is an integral domain. 
\end{cor}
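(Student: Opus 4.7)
The plan is to reduce to the universal setting of Remark~\ref{Mrem12} and then specialise. The key observation is that the \emph{statement} of Theorem~\ref{MisoH} does not mention any monomial order on $\Gamma$, only its proof does; so by first producing the isomorphism in a universal situation (where we are free to pick a favourable monomial order) and then specialising along a ring homomorphism, we obtain the conclusion for any $L$ on any $\Gamma$ with $R[\Gamma]$ an integral domain, with no monomial-order hypothesis surviving.

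First I would invoke Corollary~\ref{Mp115b} to pick a monomial order on the universal group $\Gamma_0$ for which {\bf P1}--{\bf P15} hold for $\bH_0 = \bH_{A_0}(W,S,L_0)$, where $A_0 = R[\Gamma_0]$. Applying Theorem~\ref{MisoH} in that setting produces a homomorphism $\psi_0 \colon \bH_0 \to A_0[W]$ satisfying the analogues of (a) and (b). Next, given an arbitrary weight function $L \colon W \to \Gamma$ with $A = R[\Gamma]$ an integral domain, Remark~\ref{univers1} (suitably extended from $R=\Z$ to our $R$) yields a unique $R$-algebra homomorphism $\theta \colon A_0 \to A$ sending $\varepsilon^{L_0(s)} \mapsto \varepsilon^{L(s)}$ for each $s \in S$. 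Regarding $A$ as an $A_0$-algebra via $\theta$ and extending scalars gives $\bH \cong A \otimes_{A_0} \bH_0$ and $A[W] = A \otimes_{A_0} A_0[W]$, and I would define $\psi := 1_A \otimes \psi_0 \colon \bH \to A[W]$.

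It remains to verify (a) and (b). For (a), note that the composition of $\theta_1 \colon A \to R$ with $\theta \colon A_0 \to A$ is precisely the specialisation $A_0 \to R$ sending $\varepsilon^g \mapsto 1$; hence extending $\psi$ further from $A$ to $R$ via $\theta_1$ coincides with the corresponding specialisation of $\psi_0$, which is the identity on $R[W]$ by Theorem~\ref{MisoH}(a) applied to $\bH_0$. Part (b) then follows by the same formal determinant argument as in the proof of Theorem~\ref{MisoH}: letting $Q$ be the matrix of $\psi$ in the standard bases $\{T_w\}$ and $\{w\}$, part (a) yields $\theta_1(\det Q) = 1$, so $\det Q \neq 0$ in the integral domain $A$, and therefore $\psi_K \colon \bH_K \to K[W]$ is an isomorphism. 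The only real subtlety is to notice that the monomial order chosen by Corollary~\ref{Mp115b} has no a priori analogue on $\Gamma$, but since $\psi_0$ itself (as opposed to its construction) does not depend on that order, the specialisation argument goes through without requiring any order on $\Gamma$.
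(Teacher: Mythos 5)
Your proposal is correct and takes essentially the same approach as the paper: pass to the universal weight function $L_0 \colon W \to \Gamma_0$, apply Corollary~\ref{Mp115b} and Theorem~\ref{MisoH} there, specialise along the ring homomorphism $A_0 \to A$ induced by $L$, and use the factorisation $\theta_0 = \theta_1 \circ \alpha$ together with the determinant argument to transfer properties (a) and (b). Your observation that the statement of Theorem~\ref{MisoH} carries no dependence on a monomial order on $\Gamma$ is exactly the point the paper flags after Theorem~\ref{MisoH}, and it is what makes this reduction work.
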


Note that this implies Theorem~\ref{Mmain}, as stated in the introduction.

\begin{proof} Let $\Gamma_{0}$, $A_0$ and $\bH_0$ be as in
Remark~\ref{Mrem12}. To distinguish $A_0$ from $A$, let us write the
elements of $A_0$ as $R$-linear combinations of $\varepsilon_0^g$ where
$g \in \Gamma_0$. By Corollary~\ref{Mp115b}, we can choose a monomial
order $\leq$ on $\Gamma_0$ such that {\bf P1}--{\bf P15} hold. Let $\psi_0 
\colon \bH_0 \rightarrow A_0[W]$ be the corresponding homomorphism of 
Theorem~\ref{MisoH}.

Let $Q_0$ be the matrix of the $A_0$-linear map $\psi_0$ with respect 
to the standard $A_0$-bases of $\bH_0$ and $A_0[W]$. Let $\theta_0 
\colon A_0 \rightarrow R$ be the unique ring homomorphism such that 
$\theta_0(\varepsilon_0^g)=1$ for all $g \in \Gamma_0$. We denote by
$\theta_0(Q_0)$ the matrix obtained by applying $\theta_0$ to all 
entries of $Q_0$. By Theorem~\ref{MisoH}, $\theta_0(Q_0)$ is the identity
matrix.

Now, there is a group homomorphism $\alpha\colon \Gamma_0 \rightarrow
\Gamma$ such that $\alpha((n_s)_{s \in S})=\sum_{s \in S} n_sL(s)$. This
extends to a ring homomorphism $A_0\rightarrow A$ which we denote by
the same symbol. Extending scalars from $A_0$ to $A$ (via $\alpha$), we
obtain $\bH=A\otimes_{A_0} \bH_0$ and $A[W]=A \otimes_{A_0} A_0[W]$.
Furthermore, $\psi_0$ induces an algebra homomorphism $\bar{\psi}_0
\colon \bH\rightarrow A[W]$. Let $Q:=\alpha(Q_0)$ be the matrix obtained 
by applying $\alpha$ to all entries of $Q_0$. Then, clearly, $Q$ is the 
matrix of the $A$-linear map $\bar{\psi}_0$ with respect to the standard 
$A$-bases of $\bH$ and $A[W]$. 

Let $\theta_1 \colon A \rightarrow R$ be the unique ring homomorphism 
such that $\theta_1(\varepsilon^g)=1$ for all $g \in \Gamma$. As in the 
proof of Theorem~\ref{MisoH}, it remains to show that, if we apply $\theta_1$
to all entries of $Q$, then we obtain the identity matrix. But, we certainly
have $\theta_0=\theta_1 \circ \alpha$ and, hence, $\theta_1(Q)=
\theta_1(\alpha(Q_0))=\theta_0(Q_0)$. So it remains to recall that the
latter matrix is the identity matrix. 
\end{proof}

\noindent {\bf Acknowledgements}. I wish to thank Dean Alvis for providing me 
with the information on the partial order on two-sided cells in type $H_4$
needed in Example~\ref{h34}. (This information can be obtained from the data 
produced in \cite{Alvis87}.) 

 
\end{document}